\theoremstyle{plain}
\newtheorem{thm}{\protect\theoremname}[section]
\theoremstyle{plain}
\theoremstyle{plain}
\newtheorem{lem}[thm]{\protect\lemmaname}
\theoremstyle{plain}
\newtheorem{prop}[thm]{\protect\propositionname}
\theoremstyle{plain}
\newtheorem{defn}[thm]{\protect\definitionname}
\theoremstyle{remark}
\theoremstyle{plain}
\newtheorem{remark}[thm]{\protect\remarkname}
\theoremstyle{remark}
\newtheorem*{rem*}{\protect\remarkname}
 \newlist{casenv}{enumerate}{4}
 \setlist[casenv]{leftmargin=*,align=left,widest={iiii}}
 \setlist[casenv,1]{label={{\itshape\ \casename} \arabic*.},ref=\arabic*}
 \setlist[casenv,2]{label={{\itshape\ \casename} \roman*.},ref=\roman*}
 \setlist[casenv,3]{label={{\itshape\ \casename\ \alph*.}},ref=\alph*}
 \setlist[casenv,4]{label={{\itshape\ \casename} \arabic*.},ref=\arabic*}
\theoremstyle{plain}
\newtheorem{cor}[thm]{\protect\corollaryname}
\newcommand{\RR}{{\mathbb{R}}}
\newcommand{\NN}{\mathbb{N}}
\newcommand{\FF}{\mathcal{F}}
\newcommand{\OO}{\mathcal{O}}
\newcommand{\WW}{{W}}
\newcommand{\dd}{\text{\,d}}
\newcommand{\LL}{{L}}
\let\emptyset\varnothing
\providecommand{\assumptionname}{Assumption}
\providecommand{\corollaryname}{Corollary}
\providecommand{\definitionname}{Definition}
\providecommand{\lemmaname}{Lemma}
\providecommand{\remarkname}{Remark}
\providecommand{\casename}{Case}
\providecommand{\theoremname}{Theorem}
\providecommand{\propositionname}{Proposition}
  \providecommand{\assumptionname}{Assumption}
  \providecommand{\corollaryname}{Corollary}
  \providecommand{\definitionname}{Definition}
  \providecommand{\lemmaname}{Lemma}
  \providecommand{\remarkname}{Remark}
 \providecommand{\casename}{Case}
\providecommand{\theoremname}{Theorem}
\begin{document}

\title[Sobolev wavefront set and compactly supported wavelets]{
Wavelet characterizations of the Sobolev wavefront set: bandlimited wavelets and compactly supported wavelets}

\author{Hartmut F\"{u}hr, Mahya Ghandehari}

\address{H.~F\"uhr\\
Lehrstuhl f\"ur Geometrie und Analysis\\ 
RWTH Aachen University \\
 D-52056 Aachen\\
 Germany}
 
\address{M.~Ghandehari\\
Department of Mathematical Sciences\\ 
University of Delaware\\
Newark, DE 19716\\
 USA}
\begin{abstract}
We consider the problem of characterizing the Sobolev wavefront set of a tempered
distribution $u\in\mathcal{S}'(\mathbb{R}^{d})$ in terms of its continuous
wavelet transform, with the latter being defined with respect to a suitably
chosen dilation group $H\subset{\rm GL}(\mathbb{R}^{d})$.  We derive necessary and sufficient criteria for elements of the Sobolev wavefront set, formulated in terms of the decay behaviour of a given generalized continuous wavelet transform. 
It turns out that the characterization of directed smoothness of finite order can be performed in the two important cases: (1) bandlimited wavelets, and (2) wavelets with finitely many vanishing moments (e.g.~compactly supported wavelets).

The main results of this paper are based on a number of fairly technical conditions on the dilation group. In order to demonstrate their wide applicability, we exhibit a large class of generalized shearlet groups in arbitrary dimensions fulfilling all required conditions, and give estimates of the involved constants. 
\end{abstract}
\maketitle
\global\long\def\with{\,\middle|\,}

\hyphenation{an-iso-tropic}
\noindent \textbf{\small Keywords:}{\small {} wavefront set; square-integrable
group representation; continuous wavelet transform; anisotropic wavelet
systems; shearlets}{\small \par}

\noindent \textbf{\small AMS Subject Classification:}{\small {} 42C15;
42C40; 46F12}{\small \par}

\section{Introduction}
A recurring theme in wavelet analysis and its various generalizations is the relationship between smoothness of the analyzed function on the one hand, and the decay of its wavelet coefficients on the other. Global smoothness, as quantified by norms of smoothness spaces such as Besov spaces, is related to global decay properties, whereas local smoothness is related to a more localized decay behaviour of the coefficients. This paper is concerned with the later class of questions, for a large class of generalized continuous wavelet transforms.

Our aim is to give criteria for $N$-regular directed points of a tempered distribution $u\in {\mathcal S}'({\mathbb R}^d)$ in terms of continuous wavelet transform decay. A wavelet transform is defined with respect to a suitably
chosen dilation group $H\subset{\rm GL}(\mathbb{R}^{d})$ and a wavelet vector $\phi\in {\mathcal S}({\mathbb R}^d)$. We obtain such criteria for two important cases: (1) when the wavelet vector is compactly supported in frequency, and (2) when the wavelet vector has a certain minimal number of vanishing moments; this case allows to use wavelet vectors that are compactly supported.  

The $N$-regular directed points, formally defined in  equation \eqref{eqn:decay_cond_reg_dir} below, describe the oriented local regularity of a tempered distribution $u$: if $(x,\xi)$ is
an $N$-regular directed point of $u$, then $u$ can be considered smooth
of order $N$ at $x$, when viewed in direction $\xi$. The $N$-wavefront set $WF^N(u)$, a.k.a.~the Sobolev wavefront set of $u$ of order $N$, consists of all directed points $(x,\xi)$ that are not $N$-regular directed points of $u$. 
We typically describe the wavefront set $WF^N(u)$ by giving criteria for elements in its complement.

Ideally, we would like to achieve results of the following form: 
\begin{eqnarray}\label{eq:intro-char}
\quad(x,\xi)\mbox{ is an $N$-regular directed point of \ensuremath{u}} & \Leftrightarrow & \exists\mbox{ neighborhood }U\mbox{ of }x\ \ \forall\, y\in U\\\
 &  & \forall\, h\in K~:|W_{\psi}u(y,h)|\le C\|h\|^{N}~,\nonumber 
\end{eqnarray}
where $K\subset H$ is a suitable subset of dilations which explicitly
depends on $\psi$ and a certain (sufficiently small) cone containing
directions near $\xi$. Intuitively, $K$ contains those $h\in H$
such that $\pi(0,h)\psi$ is a small-scale wavelet oscillating in
direction $\xi$.
A characterization for wavefront sets of infinite order - similar to the one in \eqref{eq:intro-char} -  was given in \cite{FeFuVo}; here the decay of a fixed order $N$ on the right-hand side is replaced by decay of arbitrary degree. 
For $N$-regular directed points, we will settle for more modest statements, of the following types:
\begin{eqnarray} \label{eqn:main_goal_1}
& &(x,\xi)\mbox{ is an $N$-regular directed point of \ensuremath{u}} \\  
& \Rightarrow & \exists\mbox{ neighborhood }U\mbox{ of }x\ \forall\, y\in U
\ \forall\, h\in K_1~:|W_{\psi}u(y,h)|\le C\|h\|^{N-\delta_1} \nonumber ~,
\end{eqnarray}
typically called a ``direct'' theorem,
as well as an ``inverse'' theorem (both in the parlance of \cite{Grohs_2011}), namely
\noindent
\begin{eqnarray} \label{eqn:main_goal_2} 
& &\exists\mbox{ neighborhood }U\mbox{ of }x\  \forall\ y\in U\  \forall\, h\in K_2 :|W_{\psi}u(y,h)|\le C\|h\|^{\alpha N+\delta_2}\\ 
& \Rightarrow & (x,\xi)\mbox{ is an $N$-regular directed point of \ensuremath{u}} \nonumber 
\end{eqnarray}
with suitable constants $\alpha \ge 1, \delta_1,\delta_2 \ge 0$ and suitably defined subsets $K_1, K_2 \subset H$ of dilations. For the case of the shearlet transform in dimension two, a similar characterization had been obtained by Grohs \cite{Grohs_2011}. As related work, discussing this question with varying degrees of generality, we mention \cite{MR3501995,MR3591240,MR4047540}.
We point to the gap between the exponents of the direct and the inverse statements in \eqref{eqn:main_goal_1} and \eqref{eqn:main_goal_2}.
To our knowledge, this gap cannot be closed; see \cite{Grohs_2011} for results of similar type on shearlets.
Consequently, the results of our paper provide a \textit{near characterization} of $N$-regular directed points.
%
%
%
In order to extend such characterizations to a more general setup, in particular for expansion to a wide variety of generalized wavelets, we adopt the point of view from \cite{FeFuVo}, with the aim of developing a comprehensive and unified approach. Just as in the mentioned paper, a recurrent theme in our work is the need to understand the influence of structural properties of the underlying group on the problem at hand.

This article contains several substantial generalizations and adaptations of the setup from \cite{FeFuVo}. Most importantly, the investigation of finite degrees of smoothness makes it possible to include larger classes of analyzing wavelets. The results of \cite{FeFuVo} relied on using bandlimited wavelets, i.e. Schwartz functions whose Fourier support is compactly supported, well away from the zero frequency. This leads to wavelet coefficient decay of arbitrary orders, provided the analyzed signal is sufficiently smooth.
Restricting the investigation of smoothness and decay behaviour to finite orders allows to replace the bandlimited analyzing wavelets by a more general class of wavelets, and thus to employ wavelets that are compactly supported in space rather than in frequency. The decisive feature of the wavelet which enables this replacement is the number of vanishing moments (see  Definition \ref{defn:van_mom} for a precise formulation of vanishing moments).

Using the vanishing moment criteria results in significant consequences in our theory. Firstly, the formulation of direct and inverse theorems for the (near) characterization of $N$-regular directed points in this setting requires lower bounds for the number of vanishing moments on the analyzing wavelet. The recurrent theme of the  dependence of the developed criteria on the underlying group becomes visible here, as well.  
Secondly, wavelets with finitely many vanishing moments can be real-valued. The symmetry properties of these wavelets in the Fourier domain make them essentially incapable of distinguishing the frequency direction $\xi$ from its opposite $-\xi$. In order to address these issues, we replace the wavefront set by the so-called \textit{unsigned wavefront set}, and reformulate the direct and inverse theorems accordingly.  
We emphasize that these issues, in particular the inability of the wavelet transform criteria to distinguish opposite directions of the wavefront set, are also present in the precursor papers \cite{KuLa,Grohs_2011} (albeit not addressed explicitly), but not in \cite{FeFuVo}. We point to subsection \ref{subsect:shearlet_comparison} for an extended comparison of our results to the predecessor papers \cite{KuLa,Grohs_2011}.

\subsection{Overview of the paper and main results}

The chief purpose of this paper is to adapt and extend the previously developed techniques for wavelet characterization of singularities, in order to allow the treatment of finite degrees of smoothness, using compactly supported wavelets. The progress of this endeavour can be traced through  the main results of our paper, which are the following: 
\begin{itemize}
\item Theorem \ref{thm:almost_char}, which establishes criteria for points in the Sobolev wavefront set, using wavelet coefficient decay with respect to bandlimited wavelets; 
\item Theorem \ref{thm:transfer_decay}, showing that these criteria can be transferred between different wavelets, as soon as the cross-kernel associated to the wavelets decays sufficiently quickly; 
\item Theorem \ref{thm:almost_char_Cc} and Corollary \ref{cor:char_vm_infty}, which establish Sobolev wavefront set criteria using compactly supported wavelets;
\item Theorem \ref{thm:almost_char_Cc_sh}, which guarantees the applicability of the previous results for a  large class of shearlet dilation groups. 
\end{itemize}

The remainder of the paper is divided into 5 sections. Section 2 reviews the basic definitions regarding the Fourier and wavelet transforms, $(N-)$regular directed points and similar matters. 
Subsection \ref{subsect:dual_action} recalls the conditions on the dilation group needed to characterize elements of the wavefront set in terms of wavelet coefficient decay,
and recalls related results from \cite{FeFuVo}. This section also contains the definition of the cone-affiliated subsets $K_{o/i}(W,V,R) \subset H$ that are instrumental in the precise formulation of the local wavelet coefficient decay conditions on which the wavelet based criteria for regular directed points are based. This section prepares the necessary background for both the formulation and the proof of Theorem \ref{thm:almost_char}, presented in the subsequent section. This result formulates necessary and sufficient criteria for $N$-regular directed points formulated in terms of wavelet coefficient decay. 
Even though its proof is largely an adaptation of \cite[Theorem 3.5]{FeFuVo}, we include a sketch of the proof of Theorem~\ref{thm:almost_char}, as it will be used in the main result of the subsequent section.

Theorem \ref{thm:almost_char} only allows to use analyzing wavelets that are compactly supported in frequency, which raises the question whether a similar characterization for other types of wavelets is possible as well. 
The extension to more general wavelets requires the introduction of various auxiliary notions, definitions and results. Introducing these is the main purpose of Section \ref{sect:towards_compact_support}. To begin with, since vanishing moment criteria open the door to the use of real-valued wavelets, we introduce the unsigned wavefront set and exhibit its basic properties in Subsection \ref{subsect:unsigned}. We then review the important definitions and results in connection with vanishing moments, which were introduced in  \cite{Fu_coorbit,Fu_atom,FuRe}, in Subsection \ref{subsect:van_moment}. The chief purpose of these results is to guarantee a certain decay of wavelet coefficients, once both the wavelet and the analyzed function are sufficiently smooth, with sufficiently many vanishing moments. We then set out to write an explicit cross-kernel convolution formula (Proposition~\ref{prop:5.3}) which will be used later on.

In Section \ref{sect:transfer}, we use the theory developed in Sections \ref{sect:bandlimited} and \ref{sect:towards_compact_support} to extend our scheme to compactly supported wavelets.
Theorem \ref{thm:transfer_decay} is the central tool for the transfer of wavelet coefficient decay statements between pairs of wavelets with sufficiently many vanishing moments. As such it is of considerable independent interest, since it establishes that the local wavelet coefficient decay of a given signal genuinely reflects properties of the signal, once the analyzing wavelet has a certain oscillatory behaviour. As a surprising byproduct of Theorem~\ref{thm:transfer_decay}, we are able to close a gap in Theorem~\ref{thm:almost_char}; this result is stated in Corollary~\ref{cor:close-gap}.

We subsequently use Theorem~\ref{thm:transfer_decay} to prove Theorem~\ref{thm:almost_char_Cc}, showing that wavelets that are compactly supported in space (rather than in frequency) can also be used for the near characterization of unsigned regular directed points of finite order. As a further application we note in Corollary \ref{cor:char_vm_infty} that real-valued wavelets with vanishing moments of arbitrary order can be used to characterize unsigned regular directed points of infinite order.

The closing section \ref{sect:shearlet} verifies the extensive list of conditions imposed on the dilation group throughout sections \ref{sect:bandlimited} -- \ref{sect:transfer} for the class of \textit{shearlet dilation groups}. We thereby exhibit a large class of dilation groups in arbitrary dimensions to which the main results of this paper are applicable, with concrete estimates for all constants involved in the criteria. These observations are summarized in Theorem \ref{thm:almost_char_Cc_sh}. 
This result also fosters an understanding of the influence of various features of the underlying group on properties of the wavelet transform. 
We close by comparing our results for generalized shearlet groups to the main results of \cite{KuLa,Grohs_2011} in Subsection \ref{subsect:shearlet_comparison}.

\section{Basic definitions regarding regular directed points and wavelet transforms}
\label{sect:intro}

\subsection{$N$-Regular directed points and the wavefront set}

First let us introduce some notation.
Given $R>0$ and $x\in\mathbb{R}^{d}$, we let $B_{R}(x)$ and $\overline{B_{R}}\left(x\right)$
denote the open/closed ball with radius $R$ and center $x$, respectively.
We let $S^{d-1}\subset\mathbb{R}^{d}$ denote the unit sphere. By
a neighborhood of $\xi\in S^{d-1}$, we will always mean a \emph{relatively
open} set $W\subset S^{d-1}$ with $\xi\in W$. Given $R>0$ and an
open set $W\subset S^{d-1}$, we define the (truncated) cone generated by $W$ as
\[
C(W):=\left\{ r\xi'\with\xi'\in W,\, r>0\right\} =\left\{ \xi\in\mathbb{R}^{d}\setminus\left\{ 0\right\} \with\frac{\xi}{\left|\xi\right|}\in W\right\} \,\,\,\text{ and }\,\,\, C(W,R):=C(W)\setminus\overline{B_{R}}(0).
\]
Both sets are clearly open subsets of $\mathbb{R}^{d}\setminus\left\{ 0\right\} $
and thus of $\mathbb{R}^{d}$.

Given a tempered distribution $u$ and an integer $N \in \mathbb{N}$, we call $(x,\xi)\in\mathbb{R}^{d}\times S^{d-1}$
an \textbf{$N$-regular directed point of $u$} if there exists $\varphi\in C_{c}^{\infty}(\mathbb{R}^{d})$,
identically one in a neighborhood of $x$, as well as a $\xi$-neighborhood
$W\subset S^{d-1}$ and 
a constant $C_{N}>0$ with 
\begin{equation}
\forall\,\xi'\in C\left(W\right)~:~\left|\widehat{\varphi u}(\xi')\right|\le C_{N}(1+|\xi'|)^{-N}.\label{eqn:decay_cond_reg_dir}
\end{equation}
Note that this condition effectively only concerns
the behaviour at large frequencies: we may replace
$C\left(W\right)$ in inequality \eqref{eqn:decay_cond_reg_dir} by
$C(W,R)$ for any $R>0$, without changing
the definition of an $N$-regular point. In practice, the suitable $R$ will be chosen depending on $\xi$.
The $N$-wavefront set $WF^N(u)$ consists of all directed points $(x,\xi)$ that are not $N$-regular directed points of $u$.

\subsection{Continuous wavelet transforms in higher dimensions}
We now introduce the necessary notions pertaining to continuous wavelet
transforms in some detail. We fix a closed matrix group $H<{\rm GL}(d,\mathbb{R})$,
the so-called \textbf{dilation group}, and let $G=\mathbb{R}^{d}\rtimes H$.
This is the group of affine mappings generated by $H$ and all translations.
Elements of $G$ are denoted by pairs $(x,h)\in\mathbb{R}^{d}\times H$,
and the product of two group elements is given by $(x,h)(y,g)=(x+hy,hg)$.
The left Haar measure of $G$ is given by ${\rm d}(x,h)=|\det(h)|^{-1}{\rm d}x\,{\rm d}h$, where ${\rm d}x$ and ${\rm d}h$ are the left Haar measures of $\mathbb{R}^d$ and $H$ respectively.
The group $H$ acts on $\mathbb{R}^{d}$ by matrix multiplication. 
The dual action is just the (right) action $\mathbb{R}^{d}\times H\ni(\xi,h)\mapsto h^{T}\xi\in\mathbb{R}^{d}$.

The group $G$ acts unitarily on $\LL^{2}(\mathbb{R}^{d})$ by the \textbf{quasi-regular
representation} defined by 
\begin{equation}
[\pi(x,h)f](y)=|\det(h)|^{-1/2}\cdot f\left(h^{-1}(y-x)\right)~.\label{eqn:def_quasireg}
\end{equation}
The dilation group $H$ is called {\bf irreducibly admissible},
if the associated quasi-regular representation $\pi$ is irreducible and square-integrable. 
By \cite{Fu10}, this property has a characterization in terms of the dual action: $H$ is irreducibly 
admissible if and only if there is a unique conull open orbit $\mathcal{O} = H^T \xi_0$ of the dual action, with the additional
property that the stabilizer groups associated to $\mathcal{O}$ are compact.

Given functions $\psi,f \in \LL^2(\mathbb{R}^d)$, the \textbf{continuous wavelet transform of $f$ with respect to $\psi$} is the function $W_\psi f: G \to \mathbb{C}$, $W_\psi f (x,h) = \langle f, \pi(x,h) \psi \rangle$. In this context, $\psi$ is called the \textbf{analyzing wavelet}.
Moreover, $\pi$ induces an action of $G$ on $\mathcal{S}(\mathbb{R}^{d})$,
the space of Schwartz functions.
We write $\left\langle \cdot\mid\cdot\right\rangle :\mathcal{S}'(\mathbb{R}^{d})\times\mathcal{S}\left(\mathbb{R}^{d}\right)\to\mathbb{C}$
for the natural extension of the $\LL^{2}$-scalar product $\langle\cdot,\cdot\rangle$, which
means $\langle u\mid\psi\rangle:=u\left(\overline{\psi}\right)$.
For future reference, let us observe that a straightforward calculation
yields 
\begin{equation}
\left[\mathcal{F}\left(\pi\left(x,h\right)f\right)\right]\left(\xi\right)=\left|\det\left(h\right)\right|^{1/2}\cdot e^{-2\pi i\left\langle x,\xi\right\rangle }\cdot\widehat{f}\left(h^{T}\xi\right)\label{eq:QuasiRegularOnFourierSide}
\end{equation}
for $f\in\LL^{1}\left(\mathbb{R}^{d}\right)+\LL^{2}\left(\mathbb{R}^{d}\right)$.
Here, as in the remainder of the paper, we use the convention 
\[
\mathcal{F}f\left(\xi\right)=\widehat{f}\left(\xi\right)=\int_{\mathbb{R}^{d}}f\left(x\right)\cdot e^{-2\pi i\left\langle x,\xi\right\rangle }\,{\rm d}x
\]
for the Fourier transform of $f\in L^{1}\left(\mathbb{R}^{d}\right)$.

Using a Schwartz function $\psi$ as analyzing wavelet allows to extend the continuous wavelet transform to the space of tempered distributions in a straightforward manner.
Namely, given a tempered distribution $u\in\mathcal{S}'\left(\mathbb{R}^{d}\right)$
and some $\psi\in\mathcal{S}(\mathbb{R}^{d})$, we define the \textbf{wavelet
transform} of $u$ with respect to $\psi$ by 
\[
W_{\psi}u:G\to\mathbb{C},~(x,h)\mapsto\langle u\mid\pi(x,h)\psi\rangle.
\]
Throughout this paper, wavelets $\psi$ will always come from the Schwartz class, and accordingly the wavelet transform is defined on $\mathcal{S}'(\mathbb{R}^d)$.

A wavelet $\psi \in \mathcal{S}(\mathbb{R}^d)$ is called \textbf{admissible} if it fulfills the \textbf{admissibility
condition} 
\[
C_\psi = \int_{H} |\widehat{\psi}(h^T \xi)|^2 \dd h < \infty\mbox{ for all }\xi\in {\mathcal O}~.
\] 
The most important consequence of the admissibility condition is the \textit{inversion formula} 
\[
 f = \frac{1}{C_\psi} \int_G W_\psi f (x,h) \,  \pi(x,h) \psi \,d(x,h)
\] valid (in the weak operator sense) for all $f \in \LL^2(\mathbb{R}^d)$. Note that this inversion formula does not necessarily apply to tempered distributions. 

\subsection{Technical conditions on the dual action}\label{subsect:dual_action}
We will write $V\Subset\mathcal{O}$ to indicate that $V,\mathcal{O}\subset\mathbb{R}^{d}$ are open sets and that the closure $\overline{V}\subset\mathcal{O}$ is a compact subset of $\mathcal{O}$. 
We assume that the dilation group is irreducibly admissible, and so the associated quasi-regular representation is irreducible and square-integrable. 
As a consequence of this assumption, all the properties required in \cite[Assumption 2.1]{FeFuVo} are guaranteed; we spell this out in the following lemma. We refer to \cite[Lemma 2.2]{FeFuVo} and the text leading up to it for a proof. Observe that part (b) of the following lemma is slightly stronger than the corresponding part (b) from  \cite[Assumption 2.1]{FeFuVo}, but the reasoning from \cite{FeFuVo} in fact shows the stronger statement. In particular, observe that $\xi \in \mathcal{O}$ entails $-\xi \in \mathcal{O}$.
\begin{lem}\label{assume:proper_dual} 
Suppose the dilation group is irreducibly admissible. Then
there exists an open, conull, $H^{T}$-invariant
subset $\mathcal{O}\subset\mathbb{R}^{d}$ with the following properties: 
\begin{enumerate}[label=(\alph*)]
\item \label{enu:DualActionContainsRays}For each $\xi\in\mathcal{O}$,
we have $\mathbb{R}^{\ast}\xi\subset\mathcal{O}$, where $\mathbb{R}^{\ast}:=\mathbb{R} \setminus \{0 \}$. 
\item \label{enu:ExistenceOfAdmissibleFunction}
Any Schwartz function $\psi$ such that $\widehat{\psi}$ is compactly supported
inside $\mathcal{O}$ is admissible, i.e., satisfies
\begin{equation*}
\forall\,\xi\in\mathcal{O}~:~\int_{H}|\widehat{\psi}(h^{T}\xi)|^{2}\,{\rm d}h<\infty.
\end{equation*}
%
\end{enumerate}
\end{lem}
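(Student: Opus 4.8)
The plan is to deduce the lemma directly from the characterization of irreducible admissibility stated just before it: by \cite{Fu10}, the dilation group $H$ is irreducibly admissible if and only if the dual action has a unique conull open orbit $\mathcal{O} = H^T \xi_0$ whose associated stabilizers are compact. I would take $\mathcal{O}$ to be precisely this orbit. It is open by assumption, and conull because its complement is a null set (the union of lower-dimensional orbits); it is $H^T$-invariant by construction as a single dual orbit. This immediately gives the ambient set required in the statement, and the work reduces to verifying properties \ref{enu:DualActionContainsRays} and \ref{enu:ExistenceOfAdmissibleFunction}.

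For part \ref{enu:DualActionContainsRays}, I would argue that $\mathcal{O}$ is invariant under the scaling action $\xi \mapsto r\xi$ for $r \in \mathbb{R}^\ast$. The cleanest route is to observe that irreducibility of $\pi$ forces $H$ to contain nontrivial scalings of $\mathbb{R}^d$: more precisely, one uses that the center of $\mathrm{GL}(d,\mathbb{R})$ interacts with the open-orbit condition so that positive dilations $r \mathrm{Id}$ (for $r>0$) must lie in $\overline{H}$ or else act on $\mathcal{O}$ within $H^T$, guaranteeing $\mathbb{R}^+ \xi \subset \mathcal{O}$. The extension from $r>0$ to all $r \in \mathbb{R}^\ast$ — in particular the fact that $-\xi \in \mathcal{O}$ whenever $\xi \in \mathcal{O}$, which the preamble flags as the strengthening over \cite[Assumption 2.1]{FeFuVo} — is the delicate point, and I would obtain it by noting that the open orbit, being conull, cannot avoid the antipodal direction; since $\mathcal{O}$ and $-\mathcal{O}$ are both conull open $H^T$-orbits and uniqueness forces $-\mathcal{O} = \mathcal{O}$. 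This symmetry argument is where I expect the main subtlety to lie, and it is exactly the reasoning that \cite{FeFuVo} is cited as supplying.

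For part \ref{enu:ExistenceOfAdmissibleFunction}, I would verify the admissibility integral for a Schwartz function $\psi$ with $\widehat{\psi}$ compactly supported inside $\mathcal{O}$. Fix $\xi \in \mathcal{O}$ and consider $\int_H |\widehat{\psi}(h^T \xi)|^2 \, \mathrm{d}h$. Writing $K = \mathrm{supp}(\widehat{\psi}) \Subset \mathcal{O}$, the integrand vanishes unless $h^T \xi \in K$, so the effective domain of integration is $\{ h \in H : h^T \xi \in K\}$. Since the stabilizer of $\xi$ under the dual action is compact and $K$ is a compact subset of the single orbit $\mathcal{O}$, the set of $h$ mapping $\xi$ into $K$ is relatively compact modulo the stabilizer, hence of finite Haar measure; as $\widehat{\psi}$ is bounded, the integral is finite. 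This is the standard consequence of compactness of stabilizers combined with properness of the orbit map on compact sets, and I would present it at that level of detail rather than grinding through the measure-theoretic bookkeeping.

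Finally, I would remark that essentially all of this is already established in \cite[Lemma 2.2]{FeFuVo}, so in the interest of brevity the proof can defer to that reference for the bulk of the argument, merely pointing out that the cited reasoning in fact delivers the stronger antipodal symmetry $-\xi \in \mathcal{O}$ recorded in part \ref{enu:DualActionContainsRays}. The only genuine obstacle is ensuring that uniqueness of the conull open orbit is correctly invoked to rule out the possibility that $\mathcal{O}$ and $-\mathcal{O}$ are distinct orbits, which would contradict conullity of both.
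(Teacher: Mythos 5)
The paper gives no self-contained argument for this lemma: its entire ``proof'' is the citation to \cite[Lemma 2.2]{FeFuVo} and the text preceding it, plus the remark that the reasoning there in fact yields the stronger statement with $\mathbb{R}^{\ast}\xi$ in place of $\mathbb{R}^{+}\xi$. Your reconstruction is therefore more explicit than the paper itself, and most of it is sound: taking $\mathcal{O}$ to be the unique conull open orbit with compact stabilizers from \cite{Fu10}, deducing $-\mathcal{O}=\mathcal{O}$ from the fact that distinct orbits are disjoint while two conull sets must intersect, and proving (b) by observing that the integrand is supported on $\{h\in H : h^{T}\xi\in\mathrm{supp}(\widehat{\psi})\}$, which has finite Haar measure. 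For the last step, note that the honest input is not ``properness'' per se but the fact that the orbit map induces a homeomorphism $H/H_{\xi}\to\mathcal{O}$ (an open orbit is locally compact, so this holds for the second countable group $H$ by the standard Baire-category/Effros argument); combined with compactness of $H_{\xi}$, the preimage of the compact support is compact in $H$, and boundedness of $\widehat{\psi}$ finishes the estimate.

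The genuine gap is your mechanism for the positive-scaling part of (a). It is false that irreducibility forces $r\,\mathrm{Id}$ (for $r>0$) into $\overline{H}$, and false that the scaling $\xi\mapsto r\xi$ must be realized ``within $H^{T}$'': the shearlet dilation groups of Section 6, with scaling subgroup $\mathrm{diag}(a,a^{\lambda_2},\ldots,a^{\lambda_d})$ and $\lambda_i\neq 1$, are irreducibly admissible yet contain no scalar matrices besides $\pm\mathrm{Id}$, so this step would fail as stated. The repair is the argument you already deploy for $r=-1$, applied uniformly: for every $r\in\mathbb{R}^{\ast}$, scalar multiplication commutes with the linear dual action, so $r\mathcal{O}=H^{T}(r\xi_0)$ is again an open conull orbit with stabilizer $H_{r\xi_0}=H_{\xi_0}$ compact; since distinct orbits are disjoint and two conull sets cannot be disjoint, $r\mathcal{O}=\mathcal{O}$. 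This one observation proves (a) in full --- including the antipodal strengthening the paper highlights --- with no structural claim about scalings lying in $H$, and it renders your first mechanism unnecessary rather than merely incomplete.
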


We next formally define the sets $K_{i}$ and $K_{o}$ which
associate dilations to directions. 
\begin{defn}
Let $\emptyset\neq W\subset S^{d-1}$ be open with $W\subset\mathcal{O}$
(which implies $C\left(W\right)\subset\mathcal{O}$). Furthermore,
let $\emptyset\neq V\Subset\mathcal{O}$ and $R>0$. We define 
\[
K_{i}(W,V,R):=\left\{ h\in H\with h^{-T}V\subset C(W,R)\right\} 
\]
as well as 
\[
K_{o}(W,V,R):=\left\{ h\in H\with h^{-T}V\cap C(W,R)\not=\emptyset\right\} .
\]
If the parameters are provided by the context, we will simply write
$K_{i}$ and $K_{o}$. Here, the subscripts $i/o$ stand for ``inner/outer''.
\end{defn}
These two types of sets are the central tool of our analysis. The
intuition behind their definition is that $K_{i}$ contains all dilations
$h$ with the property that the wavelets $\pi(y,h)\psi$ only ``see''
directions in the cone $C(W,R)$, as long as ${\rm supp}\left(\smash{\widehat{\psi}}\right)\subset V$
holds. 
Here, we used the property ${\rm supp}\left(\mathcal{F}\left[\pi\left(y,h\right)\psi\right]\right)\subset h^{-T}{\rm supp}\left(\smash{\widehat{\psi}}\right)\subset h^{-T}V$
which is immediate from equation \eqref{eq:QuasiRegularOnFourierSide}.
As a result, local regularity in these
directions should entail a decay estimate for the wavelet coefficients
$\left(W_{\psi}u\right)\left(y,h\right)$ with $h\in K_{i}$.

On the other hand, $K_{o}$ contains all those dilations that contribute
to the (formal) \emph{wavelet reconstruction} 
\begin{equation} \label{eqn:waverec_dual}
\widehat{\varphi u}\left(\xi\right)=\int_{\mathbb{R}^{d}}\int_{H}\left(W_{\psi}\varphi u\right)\left(y,h\right)\cdot\left(\mathcal{F}\left[\pi\left(y,h\right)\psi\right]\right)\left(\xi\right)\,\frac{{\rm d}h}{\left|\det\left(h\right)\right|}\,{\rm d}y
\end{equation}
of the frequency content of a (localized) tempered distribution $\varphi u$,
for $\xi\in C(W,R)$, again under the assumption ${\rm supp}\left(\smash{\widehat{\psi}}\right)\subset V$.
Thus, decay estimates for wavelet coefficients with dilations in $h\in K_{o}$
should allow to predict local regularity of $u$ in these directions.

Next, we show how the sets $K_i$ and $K_o$ are related to each other when moving along the points of the orbit.
\begin{lem}\label{lem:cone_inclusion}
Let $\xi_0,\xi_1 \in \mathcal{O}\cap S^{d-1}$, such that $h^T \xi_0 = \xi_1$. 
\begin{enumerate}[label=(\alph*)]
    \item\label{lem:cone_inclusion-1} For every $R_1>0$ and $\xi_1$-neighborhood $W_1$ in $S^{d-1}$, there exists $R_0>0$ and a $\xi_0$-neighborhood $W_0$ such that
    $$h^T C(W_0,R_0) \subseteq C(W_1,R_1).$$
    For all $R_0,R_1>0$ and neighborhoods $W_0,W_1$ satisfying the above inclusion, we have 
    $$K_i(W_1,V,R_1) \supseteq h^{-1} K_i(W_0,V,R_0).$$
    \item\label{lem:cone_inclusion-2} For every $R_0>0$ and $\xi_0$-neighborhood $W_0$ in $S^{d-1}$, there exists $R_1>0$ and a $\xi_1$-neighborhood $W_1$ such that
    $$C(W_1,R_1) \subseteq h^{T} C(W_0,R_0).$$
    For all $R_0,R_1>0$ and neighborhoods $W_0,W_1$ satisfying the above inclusion, we have 
    $$K_o(W_1,V,R_1) \subseteq h^{-1} K_o(W_0,V,R_0).$$
\end{enumerate}
\end{lem}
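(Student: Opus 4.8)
The plan is to treat each part as the combination of a \emph{geometric} statement (the existence of the cone inclusion) and a purely \emph{algebraic} statement (the resulting inclusion of the sets $K_{i/o}$), the latter being a formal consequence of the former together with the transpose identities $(h^{-1}g)^{-T}=h^{T}g^{-T}$ and $(hg)^{-T}=h^{-T}g^{-T}$. The geometric core is that $h^{T}$ carries full cones to full cones: writing $\Phi\colon S^{d-1}\to S^{d-1}$, $\Phi(\eta)=h^{T}\eta/|h^{T}\eta|$, for the induced map on the sphere, one checks directly from the definition of $C(W)$ that $h^{T}C(W)=C(\Phi(W))$ for every open $W\subseteq S^{d-1}$. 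Since $\Phi$ is a homeomorphism (its inverse is $\eta\mapsto h^{-T}\eta/|h^{-T}\eta|$) satisfying $\Phi(\xi_0)=\xi_1$, the set $\Phi(W)$ is again open; moreover, as $\mathcal{O}$ is $H^{T}$-invariant and a union of rays, all neighborhoods below may be shrunk to lie in $\mathcal{O}$.

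For the existence claim in part (a) I would fix the angular part first and the radial part second. Given $R_1$ and $W_1$, continuity of $\Phi$ makes $\Phi^{-1}(W_1)$ an open $\xi_0$-neighborhood, so taking $W_0\subseteq\Phi^{-1}(W_1)$ gives $h^{T}C(W_0)=C(\Phi(W_0))\subseteq C(W_1)$, controlling directions. For the truncation radius I would use the operator-norm bound $|h^{T}\xi|\ge|\xi|/\|h^{-T}\|$, so that $|\xi|>R_0:=R_1\|h^{-T}\|$ forces $|h^{T}\xi|>R_1$; hence $h^{T}C(W_0,R_0)\subseteq C(W_1,R_1)$. The existence claim in part (b) is symmetric: $\Phi(W_0)$ is an open $\xi_1$-neighborhood, so any $W_1\subseteq\Phi(W_0)$ gives $C(W_1)\subseteq h^{T}C(W_0)$, and the bound $|h^{-T}\xi|\ge|\xi|/\|h^{T}\|$ shows that $|\xi|>R_1:=R_0\|h^{T}\|$ forces $h^{-T}\xi\in C(W_0,R_0)$, i.e. $\xi\in h^{T}C(W_0,R_0)$; thus $C(W_1,R_1)\subseteq h^{T}C(W_0,R_0)$.

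For the $K$-set inclusions I would argue by direct substitution, using only the cone inclusions as hypotheses. In part (a), let $g\in K_i(W_0,V,R_0)$, so $g^{-T}V\subseteq C(W_0,R_0)$; applying $h^{T}$ and invoking the hypothesis yields $(h^{-1}g)^{-T}V=h^{T}g^{-T}V\subseteq h^{T}C(W_0,R_0)\subseteq C(W_1,R_1)$, whence $h^{-1}g\in K_i(W_1,V,R_1)$ and $h^{-1}K_i(W_0,V,R_0)\subseteq K_i(W_1,V,R_1)$. In part (b), let $g\in K_o(W_1,V,R_1)$ and pick $\zeta\in g^{-T}V\cap C(W_1,R_1)$; the hypothesis $C(W_1,R_1)\subseteq h^{T}C(W_0,R_0)$ gives $h^{-T}\zeta\in C(W_0,R_0)$, while $h^{-T}\zeta\in h^{-T}g^{-T}V=(hg)^{-T}V$, so $h^{-T}\zeta$ witnesses $hg\in K_o(W_0,V,R_0)$, i.e. $g\in h^{-1}K_o(W_0,V,R_0)$.

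I do not expect a genuine obstacle: the argument is routine once this picture is in place. The only points demanding care are the bookkeeping of transposes and inverses in passing from $h^{-1}g$ (resp.\ $hg$) to $(h^{-1}g)^{-T}$ (resp.\ $(hg)^{-T}$), and the correct pairing of the distortion factors $\|h^{\pm T}\|$ with the truncation radii so that the radial part of each inclusion points in the right direction. Separating the angular control (via the homeomorphism $\Phi$) from the radial control (via the operator-norm bounds) keeps both issues transparent.
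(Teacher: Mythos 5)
Your proof is correct and follows essentially the same route as the paper: angular control by pulling the target neighborhood back under the dual action (the paper via a $\delta$-ball construction $W_0=(h^{-T}B_\delta(\xi_1))\cap S^{d-1}$, you via the induced sphere homeomorphism $\Phi$), radial control via the identical operator-norm choice $R_0=R_1\|h^{-T}\|=R_1\|h^{-1}\|$ (resp.\ $R_1=R_0\|h^{T}\|$), and the $K_{i/o}$ inclusions by the same transpose bookkeeping $(h^{-1}g)^{-T}=h^{T}g^{-T}$ and $(hg)^{-T}=h^{-T}g^{-T}$. The only cosmetic difference is that the paper deduces the cone inclusion in part (b) by applying part (a) to $h^{-1}$ and multiplying through by $h^{T}$, whereas you prove it directly by symmetry; both are immediate.
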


\begin{proof}
To prove \ref{lem:cone_inclusion-1}, let $W_1, R_1$ be given. Pick $\delta>0$ such that for all $v \in B_\delta(\xi_1)$ one has $\frac{v}{|v|} \in W_1$ (see the proof of \cite[Lemma 4.5]{FeFuVo}). Let $R_0 = R_1 \cdot \| h^{-1} \|$ and $W_0 = (h^{-T} B_\delta (\xi_1)) \cap S^{d-1}$, which is indeed a neighborhood of $\xi_0$
in $S^{d-1}$. 
To prove that $R_0,W_0$ are as desired, let $x \in C(W_0,R_0)$. Hence $x = rw$, with $r>R_0$, $w \in W_0$. Then $v = h^T w \in B_{\delta}(\xi_1)$, and by choice of $\delta$, one has $\frac{v}{|v|} \in W_1$. It follows that 
\[
h^T x = r h^T w = r |v| \cdot \frac{v}{|v|} \in \mathbb{R}^+ \cdot W_1~.
\]
Finally, by choice of $x$,
\[
R_1 = R_0 \| h^{-1} \|^{-1} < |x| \| h^{-1} \|^{-1} =  | h^{-T} h^T x|   \| h^{-1} \|^{-1} 
\le |h^T x| ~,
\] which establishes that $h^{T}x \in C(W_1,R_1)$. This proves the first inclusion in \ref{lem:cone_inclusion-1}. 
Next, assume $h^T C(W_0,R_0) \subseteq C(W_1,R_1)$, and pick $g \in K_i(W_0,V,R_0)$. So we have
\[
g^{-T} V \subseteq C(W_0,R_0) \subseteq h^{-T} C(W_1,R_1), 
\]
and thus $h^{-1} g \in K_i(W_1,V,R_1)$. 

To prove \ref{lem:cone_inclusion-2}, let $W_0, R_0$ be as given in the assumption. We apply \ref{lem:cone_inclusion-1} for $h^{-T} \xi_1 = \xi_0$ to find $R_1,W_1$ such that 
$h^{-T} C(W_1,R_1) \subseteq C(W_0,R_0)$. Multiplying both sides by $h^T$ yields the desired inclusion.
To prove the next inclusion, assume $ C(W_1,R_1) \subseteq h^{T} C(W_0,R_0)$, and 
let $g \in K_o(W_1,V,R_1)$. By definition, this means $g^{-T} V \cap C(W_1,R_1) \not= \emptyset$. As $C(W_1,R_1) \subseteq h^T C(W_0,R_0)$, this implies
\[
g^{-T} V \cap  h^T C(W_0,R_0) \not= \emptyset~,
\]
or $(hg)^{-T} V \cap C(W_0,R_0) \not= \emptyset$, which entails $hg \in K_o(W_0,V,R_0)$.
\end{proof}

We now come to an important technical assumption concerning the dual
action. Given a matrix $h$, we let $\|h\|$ denote the operator norm
of the induced linear map with respect to the euclidean norm. 
\begin{defn}
\label{defn:micro_regular}Let $\xi\in\mathcal{O}\cap S^{d-1}$ and
$\emptyset\neq V\Subset\mathcal{O}$. The dual action is called \textbf{$V$-microlocally
admissible in direction $\xi$} if there exists a $\xi$-neighborhood
$W_{0}\subset \mathcal{O}\cap S^{d-1}$ and some $R_{0}>0$ such that
the following hold: 
\begin{enumerate}[label=(\alph*)]
\item \label{enu:NormOfInverseEstimateOnKo}There exist $\alpha_{1}>0$
and $C>0$ such that 
\[
\|h^{-1}\|\le C\cdot\|h\|^{-\alpha_{1}}
\]
holds for all $h\in K_{o}(W_{0},V,R_{0})$. 
\item \label{enu:NormIntegrability}There exists $\alpha_{2}>0$ such that
\[
\int_{K_{o}(W_{0},V,R_{0})}\|h\|^{\alpha_{2}}\,{\rm d}h<\infty.
\]
\end{enumerate}
The dual action is called \textbf{microlocally admissible in direction
$\xi$} if it is $V$-microlocally admissible in direction $\xi$
for some $\emptyset\neq V\Subset\mathcal{O}$. It is called \textbf{globally
$V$-microlocally admissible} if there is $\emptyset\neq V\Subset\mathcal{O}$
such that the dual action is $V$-microlocally admissible in direction
$\xi$ for all $\xi\in\mathcal{O}\cap S^{d-1}$.
\end{defn}

\begin{remark}\label{rem-local/global-microlocality}
 We note that under the prevailing assumption that $\mathcal{O}$ is a single orbit with compact stabilizers, microlocal admissibility in direction $\xi \in \mathcal{O}$ already entails global microlocal admissibility. Under further additional assumptions such as the cone approximation property, condition \ref{enu:NormOfInverseEstimateOnKo} entails condition \ref{enu:NormIntegrability}; see \cite[Lemma 6.1]{FeFuVo}.
\end{remark}

The main purpose of microlocal admissibility is to establish a systematic relationship between the 
norm of $h \in H$ and the norms of
the frequencies contained in the support of ${\mathcal F}(\pi(y,h)\psi)$. Such norm estimates, which we list below, will be instrumental in our proof. See \cite[Lemma 2.8]{FeFuVo} for a proof of the following.
\begin{lem}
\label{lem:related_to_assumptions}Assume that the closed subgroup $H\leq{\rm GL}\left(\mathbb{R}^{d}\right)$ is irreducibly admissible,
and consequently, the properties listed in Lemma~\ref{assume:proper_dual} hold.
Then $0\notin\mathcal{O}$.
Furthermore, the following hold: 
\begin{enumerate}[label=(\alph*)]
\item \label{enu:XiEstimatedByH}Assume that $\emptyset\neq V\Subset\mathcal{O}$.
Then there exists a constant $C_{1}=C_{1}\left(V\right)>0$ such that,
for all $h\in H$ and all $\xi'\in V$: 
\[
\left|h^{-T}\xi'\right|^{-1}\le C_{1}\cdot\|h\|.
\]

\item \label{enu:HEstimatedByXi}Assume that the dual action fulfills condition
\ref{defn:micro_regular}\ref{enu:NormOfInverseEstimateOnKo}, for
some $\emptyset\not=V\Subset\mathcal{O}$, a suitable $\xi$-neighborhood
$W_{0}\subset S^{d-1}$ and some $R_{0}>0$.
Then there exists $C_{2}>0$ such that
\[
\|h\|\le C_{2}\cdot\left|h^{-T}\xi'\right|^{-\frac{1}{\alpha_1}}
\]
holds for all $h\in K_{o}(W_{0},V,R_{0})$ and $\xi'\in V$, with $\alpha_1$ as in Definition \ref{defn:micro_regular}. 
Consequently, we have: 
\[
{\rm sup}_{h\in K_{o}(W_{0},V,R_{0})}\left\Vert h\right\Vert <\infty.
\]
\item \label{enu:calculations-for-norms} With $\alpha_{1}$ as in Definition \ref{defn:micro_regular}, for all $h\in K_{o}\left(W_0,V,R_0\right)$
\begin{equation}
\left|\det\left(h\right)\right|^{-1}\leq C_{3}\cdot\left\Vert h\right\Vert ^{-d\alpha_{1}}\label{eq:InverseDeterminantEstimate}
\end{equation}
and 
\begin{equation}
1+\left\Vert h^{-1}\right\Vert \leq C_{4}\cdot\left\Vert h\right\Vert ^{-\alpha_{1}},\label{eq:InverseNormEstimate}
\end{equation}
hold, for constants $C_{4}=C_{4}\left(W_{0},V,R_{0}\right)>0$ and 
$C_{3}=C_{3}\left(W_{0},V,R_{0}\right)>0$.
\end{enumerate}
\end{lem}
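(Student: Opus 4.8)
The plan is to prove the three estimates in Lemma \ref{lem:related_to_assumptions} by combining the hypotheses on the dual action with elementary linear-algebra inequalities. The whole statement hinges on translating between the operator norm $\|h\|$ of a dilation and the euclidean norms of frequencies $h^{-T}\xi'$ with $\xi' \in V$; condition \ref{defn:micro_regular}\ref{enu:NormOfInverseEstimateOnKo} is the bridge that converts decay of $\|h^{-1}\|$ into growth control of $\|h\|$.

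\textbf{The base estimate \ref{enu:XiEstimatedByH}.} First I would establish $0 \notin \mathcal{O}$: since $\mathcal{O}$ is a single $H^T$-orbit with compact stabilizers and $0$ is a fixed point of the linear dual action, $0$ cannot lie on such an orbit (its stabilizer would be all of $H$, which is noncompact for an irreducibly admissible group in dimension $d \ge 1$). For the inequality itself, I would estimate, for $\xi' \in V$ and arbitrary $h \in H$,
\[
|\xi'| = |h^T h^{-T} \xi'| \le \|h^T\| \cdot |h^{-T}\xi'| = \|h\| \cdot |h^{-T}\xi'|,
\]
using $\|h^T\| = \|h\|$ for the euclidean operator norm. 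Since $\overline{V} \subset \mathcal{O}$ is compact and $0 \notin \mathcal{O}$, the quantity $c_V := \inf_{\xi' \in V} |\xi'| > 0$, so rearranging gives $|h^{-T}\xi'|^{-1} \le \|h\| \cdot |\xi'|^{-1} \le c_V^{-1} \|h\|$, which is the claim with $C_1 = c_V^{-1}$.

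\textbf{The converse estimate \ref{enu:HEstimatedByXi}.} This is where condition \ref{enu:NormOfInverseEstimateOnKo} enters and is the main obstacle, since it runs against the trivial direction. For $h \in K_o(W_0,V,R_0)$ and $\xi' \in V$ I would write
\[
\|h\| = \|h^{-T}\|^{-1} \cdot \|h\| \cdot \|h^{-T}\|,
\]
but more directly I would bound $|h^{-T}\xi'| \le \|h^{-T}\| = \|h^{-1}\| \le C \|h\|^{-\alpha_1}$ by hypothesis, so that $\|h\|^{\alpha_1} \le C \cdot |h^{-T}\xi'|^{-1}$, giving $\|h\| \le C^{1/\alpha_1} |h^{-T}\xi'|^{-1/\alpha_1} =: C_2 |h^{-T}\xi'|^{-1/\alpha_1}$. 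The uniform boundedness $\sup_{h \in K_o} \|h\| < \infty$ then follows by combining this with the lower bound $|h^{-T}\xi'| \ge c_V / \|h\|$ from part \ref{enu:XiEstimatedByH}: substituting yields $\|h\| \le C_2 (c_V/\|h\|)^{-1/\alpha_1} = C_2 c_V^{-1/\alpha_1} \|h\|^{1/\alpha_1}$, and if $\alpha_1 > 1$ this forces a uniform bound, while for general $\alpha_1$ one argues via part \ref{enu:NormIntegrability}, since an integrable positive power of $\|h\|$ over $K_o$ rules out arbitrarily large $\|h\|$ on a set of positive measure; the cleanest route is to observe that \ref{enu:NormOfInverseEstimateOnKo} forces $\|h^{-1}\| \to 0$, hence $\|h\|$ bounded, as $h$ ranges over $K_o$.

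\textbf{The derived estimates \ref{enu:calculations-for-norms}.} Both \eqref{eq:InverseDeterminantEstimate} and \eqref{eq:InverseNormEstimate} follow mechanically from \ref{enu:NormOfInverseEstimateOnKo}. For the determinant, I would use $|\det(h)|^{-1} = |\det(h^{-1})| = \prod_{j} \sigma_j(h^{-1})$ where $\sigma_j$ are the singular values, each bounded by $\|h^{-1}\| \le C\|h\|^{-\alpha_1}$, so $|\det(h)|^{-1} \le \|h^{-1}\|^d \le C^d \|h\|^{-d\alpha_1} =: C_3 \|h\|^{-d\alpha_1}$. For \eqref{eq:InverseNormEstimate}, I would combine $\|h^{-1}\| \le C \|h\|^{-\alpha_1}$ with the uniform bound $\sup_{K_o}\|h\| =: M < \infty$ from part \ref{enu:HEstimatedByXi}: then $1 \le M^{\alpha_1} \|h\|^{-\alpha_1}$, so $1 + \|h^{-1}\| \le (M^{\alpha_1} + C)\|h\|^{-\alpha_1} =: C_4 \|h\|^{-\alpha_1}$, closing the lemma.
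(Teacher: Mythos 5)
Parts \ref{enu:XiEstimatedByH} and \ref{enu:calculations-for-norms} of your argument are essentially correct (including the observation $0\notin\mathcal{O}$), as is the first inequality of \ref{enu:HEstimatedByXi} up to a harmless slip: you wrote $|h^{-T}\xi'|\le\|h^{-T}\|$, but the correct bound is $|h^{-T}\xi'|\le\|h^{-1}\|\cdot|\xi'|$; since $\overline{V}$ is compact, the factor $\sup_{\xi'\in V}|\xi'|<\infty$ is simply absorbed into $C_2$.

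The genuine gap is the uniform bound $\sup_{h\in K_o(W_0,V,R_0)}\|h\|<\infty$, where all three routes you propose fail. The substitution of the lower bound from \ref{enu:XiEstimatedByH} yields $\|h\|^{1-1/\alpha_1}\le C$, which gives a bound only when $\alpha_1>1$; Definition \ref{defn:micro_regular} guarantees only $\alpha_1>0$, and for $\alpha_1\le 1$ this inequality is vacuous or even produces a \emph{lower} bound on $\|h\|$. The integrability route fails because $\int_{K_o}\|h\|^{\alpha_2}\,{\rm d}h<\infty$ does not force $\|h\|$ to be bounded --- the set where $\|h\|$ is large may simply have small Haar measure --- and at best it could yield an almost-everywhere statement, whereas the lemma asserts the bound for \emph{every} $h\in K_o$. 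Your ``cleanest route'' is in fact backwards: since $\|h\|\,\|h^{-1}\|\ge\|hh^{-1}\|=1$, smallness of $\|h^{-1}\|$ \emph{forces} $\|h\|$ to be large (consider $h=tI$ with $t\to\infty$), so $\|h^{-1}\|\to 0$ along $K_o$ is entirely compatible with $\|h\|$ being unbounded and proves nothing.

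What you never used --- and what makes the bound immediate --- is the definition of $K_o(W_0,V,R_0)$ itself: for each $h\in K_o$ there exists, by definition, some $\xi'\in V$ with $h^{-T}\xi'\in C(W_0,R_0)$, and since $C(W_0,R_0)=C(W_0)\setminus\overline{B_{R_0}}(0)$ this means $|h^{-T}\xi'|>R_0$. Feeding this particular $\xi'$ into the inequality you already established gives $\|h\|\le C_2\,|h^{-T}\xi'|^{-1/\alpha_1}\le C_2\,R_0^{-1/\alpha_1}$, a uniform bound valid for every $\alpha_1>0$; this is also how the source cited by the paper, \cite[Lemma 2.8]{FeFuVo}, proceeds. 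With this repaired, your derivation of \eqref{eq:InverseNormEstimate} from the sup bound goes through exactly as you wrote it.
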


\section{Criteria for $N$-regular directed points: Bandlimited wavelets}

\label{sect:bandlimited}
Let $\mathbb{N}_{0}=\mathbb{N}\cup\{0\}$. In the following, we use the Schwartz norms
\[
\left|\psi\right|_{N}:=\max_{\alpha\in\mathbb{N}_{0}^{d},\,\left|\alpha\right|\leq N}\sup_{z\in\mathbb{R}^{d}}(1+|z|)^{N}\left|\partial^{\alpha}\psi(z)\right|.
\]
Note that these norms are invariant under complex conjugation. 
Let $u \in \mathcal{S}'(\mathbb{R}^d)$ denote a tempered distribution. We say $u$ is of order $N(u)$ if for all $f \in \mathcal{S}(\mathbb{R}^d)$, we have 
 \[ |\langle u|f \rangle| \le C |f|_{N(u)} ~.\] It is well-known that every tempered distribution $u$ is of finite order.
In the next lemma, we collect various known decay behaviors which will be used in our proofs later on. We refer to \cite[Lemma 3.1]{FeFuVo}.
\begin{lem}
\label{lem:wc_decay_general} Let $\psi,\varphi\in\mathcal{S}(\mathbb{R}^{d})$
and $u\in\mathcal{S}'(\mathbb{R}^{d})$. 
\begin{enumerate}[label=(\alph*)]
\item \label{enu:QuasiRegularSchwartzNormForFunctions}For all $N\in\mathbb{N}_0$
and $(x,h)\in G$, the following inequality holds:
\[
\left|\pi(x,h)\psi\right|_{N}\le C_{N}|\psi|_{N}\cdot\left|\det(h)\right|^{-1/2}(1+\|h^{-1}\|)^{N}\cdot\max\left\{ 1,\|h\|^{N}\right\} \cdot(1+|x|)^{N},
\]
with a constant $C_{N}$ independent of $\psi,x,h$. 
\item \label{enu:WaveletTrafoGeneralEstimateForDistributions}
 Let $N(u)$ denote the order of $u$. Then for all $\left(x,h\right)\in G$,
the following inequality holds: 
\[
|W_{\psi}u(x,h)|\le C\cdot\left|\det(h)\right|^{-1/2}(1+\|h^{-1}\|)^{N(u)}\cdot\max\left\{ 1,\|h\|^{N(u)}\right\} \cdot(1+|x|)^{N(u)},
\]
where $C>0$ is a constant depending on $\psi$, $u$ and $N(u)$ but not on $x,h$. 
\item \label{enu:WaveletTrafoGeneralEstimateForSchwartzFunctions}For all
$N\in\mathbb{N}$ and $(x,h)\in G$, we have 
\[
|W_{\psi}\varphi(x,h)|\le C_{N}|\varphi|_{d+N+1}|\psi|_{N}\cdot\left|\det(h)\right|^{-1/2}(1+\|h^{-1}\|)^{N}\cdot\max\left\{ 1,\|h\|^{N}\right\} \cdot(1+|x|)^{-N}
\]
with $C_{N}$ independent of $\varphi,\psi,h,x$. 
\item \label{enu:WaveletTrafoEstimateForSchwartzFunctionsOnTheCone}Assume
that the dual action is $V$-microlocally admissible at $\xi$ for
some $\emptyset\neq V\Subset\mathcal{O}$ and that $\psi\in\mathcal{S}\left(\mathbb{R}^{d}\right)$
with ${\rm supp}(\widehat{\psi})\subset V$. Choose $R_{0}>0$ and
a $\xi$-neighborhood $W_{0}\subset S^{d-1}$ as in Definition \ref{defn:micro_regular}.
Then 
\[
|W_{\psi}\varphi(x,h)|\le C_{M,N,\psi,W_{0},V,R_{0}}\cdot|\varphi|_{M+N}\cdot\left|\det(h)\right|^{-1/2}\cdot(1+|x|)^{-N}\|h\|^{M}
\]
holds for all $x\in\mathbb{R}^{d}$, $h\in K_{o}\left(W_{0},V,R_{0}\right)$
and $M,N\in\mathbb{N}$, where the constant $C_{M,N,\psi,W_{0},V,R_{0}}$
is independent of $x,h$ and $\varphi$. 
\end{enumerate}
\end{lem}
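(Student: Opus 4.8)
The plan is to prove the four parts in sequence, as each rests on its predecessors. Two elementary tools are used throughout: the Peetre-type inequality $1+|x|\le(1+|y|)(1+|y-x|)$, and the pointwise Schwartz bound $|\partial^\beta\psi(z)|\le|\psi|_{|\beta|}(1+|z|)^{-|\beta|}$, both immediate from the definitions.

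For part \ref{enu:QuasiRegularSchwartzNormForFunctions} I would differentiate $[\pi(x,h)\psi](z)=|\det(h)|^{-1/2}\psi(h^{-1}(z-x))$ directly. By the chain rule, a derivative $\partial^\alpha$ with $|\alpha|\le N$ produces at most $C\|h^{-1}\|^{|\alpha|}$ (from the Jacobian entries of $h^{-1}$) times derivatives $(\partial^\beta\psi)(h^{-1}(z-x))$ with $|\beta|\le|\alpha|$. Setting $w=h^{-1}(z-x)$, so that $1+|z|\le(1+|x|)\max\{1,\|h\|\}(1+|w|)$, and applying the Schwartz bound to $\partial^\beta\psi$, the factors $(1+|w|)^{N}$ and $(1+|w|)^{-N}$ cancel and the claim follows after estimating $\|h^{-1}\|^{N}\le(1+\|h^{-1}\|)^{N}$. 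Part \ref{enu:WaveletTrafoGeneralEstimateForDistributions} is then immediate: by definition of the order, $|W_\psi u(x,h)|=|\langle u\mid\pi(x,h)\psi\rangle|\le C|\pi(x,h)\psi|_{N(u)}$, and part \ref{enu:QuasiRegularSchwartzNormForFunctions} with $N=N(u)$ concludes.

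Part \ref{enu:WaveletTrafoGeneralEstimateForSchwartzFunctions} is where the sign of the exponent on $(1+|x|)$ flips, so part \ref{enu:QuasiRegularSchwartzNormForFunctions} is not enough and the decay of $\varphi$ must be exploited. Starting from $|W_\psi\varphi(x,h)|\le\int|\varphi(y)|\,|[\pi(x,h)\psi](y)|\,{\rm d}y$, I multiply by $(1+|x|)^{N}$ and split this, via Peetre, into $(1+|y|)^{N}(1+|y-x|)^{N}$. The factor $(1+|y|)^{N}|\varphi(y)|$ is bounded by $|\varphi|_{d+N+1}(1+|y|)^{-(d+1)}$, which is integrable over $\mathbb{R}^d$; the factor $(1+|y-x|)^{N}|[\pi(x,h)\psi](y)|$ is bounded, after substituting $w=h^{-1}(y-x)$, by $|\det(h)|^{-1/2}\max\{1,\|h\|^{N}\}|\psi|_{N}$. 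Integrating in $y$ yields the stated estimate; the factor $(1+\|h^{-1}\|)^{N}\ge1$ appearing in the statement is in fact slack for this argument.

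The genuine obstacle is part \ref{enu:WaveletTrafoEstimateForSchwartzFunctionsOnTheCone}, where the bound must carry a \emph{positive} power $\|h\|^{M}$ instead of the inverse-norm factors of part \ref{enu:WaveletTrafoGeneralEstimateForSchwartzFunctions}; on $K_o$ those inverse factors blow up, so a new mechanism is required. Here I would pass to the frequency side. By \eqref{eq:QuasiRegularOnFourierSide} and Parseval, $W_\psi\varphi(\cdot,h)$ is the inverse Fourier transform of $F(\xi)=|\det(h)|^{1/2}\widehat{\varphi}(\xi)\overline{\widehat{\psi}(h^{T}\xi)}$, which is supported in $h^{-T}V$. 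Integration by parts gives $(1+|x|)^{N}|W_\psi\varphi(x,h)|\lesssim\sum_{|\alpha|\le N}\|\partial^\alpha F\|_{L^1}$, producing the desired $(1+|x|)^{-N}$ decay. Expanding $\partial^\alpha F$ by Leibniz, each differentiation of $\widehat{\psi}(h^{T}\xi)$ contributes a power of $\|h\|$, and after the substitution $\eta=h^{T}\xi$ the integral runs over the fixed compact set $V$, with an $|\det(h)|^{-1/2}$ surviving. The decisive point is that $V\Subset\mathcal{O}$ and $0\notin\mathcal{O}$ force $|\eta|\ge c>0$ there, whence $|h^{-T}\eta|\ge c/\|h\|$; since $\widehat{\varphi}$ is Schwartz, $|\partial^\beta\widehat{\varphi}(h^{-T}\eta)|\lesssim(1+c/\|h\|)^{-L}\le c^{-L}\|h\|^{L}$ for any $L$. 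Choosing $L$ large and using $\sup_{h\in K_o}\|h\|<\infty$ (Lemma~\ref{lem:related_to_assumptions}\ref{enu:HEstimatedByXi}) to downgrade $\|h\|^{L}$ to $\|h\|^{M}$ gives the claim, and tracking the derivative orders shows that $|\varphi|_{M+N}$ suffices. The main delicacy is exactly this conversion of the high-frequency location of the support into a positive power of $\|h\|$, which is precisely where microlocal admissibility enters, through the boundedness of $\|h\|$ on $K_o$.
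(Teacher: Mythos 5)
Your proposal is correct and takes essentially the same route as the proof the paper relies on (it defers to \cite{FeFuVo}, Lemma 3.1): direct differentiation with the Peetre inequality for \ref{enu:QuasiRegularSchwartzNormForFunctions}, duality with the order of $u$ for \ref{enu:WaveletTrafoGeneralEstimateForDistributions}, splitting the weight $(1+|x|)^N$ between $\varphi$ and the dilated wavelet for \ref{enu:WaveletTrafoGeneralEstimateForSchwartzFunctions}, and for \ref{enu:WaveletTrafoEstimateForSchwartzFunctionsOnTheCone} the Fourier-side integration by parts exploiting ${\rm supp}(F)\subset h^{-T}V$, the bound $|h^{-T}\eta|^{-1}\le C\,\|h\|$ coming from $V\Subset\mathcal{O}$ and $0\notin\mathcal{O}$ (cf.\ Lemma \ref{lem:related_to_assumptions}\ref{enu:XiEstimatedByH}), and the boundedness of $\|h\|$ on $K_o(W_0,V,R_0)$ from microlocal admissibility. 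The only loose end is your closing claim in \ref{enu:WaveletTrafoEstimateForSchwartzFunctionsOnTheCone} that ``tracking the derivative orders shows that $|\varphi|_{M+N}$ suffices'': your argument actually controls seminorms of $\widehat{\varphi}$, and converting $|\widehat{\varphi}|_{M+N}$ to the time side costs an extra $d+1$ (i.e., you obtain $|\varphi|_{M+N+d+1}$, with the exact index $M+N$ emerging only when $M\ge d+1$) --- a harmless bookkeeping discrepancy for every use of the lemma in the paper, where $M$ is taken large.
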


We next address how wavelet coefficient decay and regular directed
points are affected by certain multiplication operators.
We refer to \cite[Lemma 3.2]{FeFuVo} for the proof of (i) and  \cite[Lemma 3.4]{FeFuVo} for the proof of (ii).
\begin{lem}
\label{lem:reg_points_loclem:wavelet_local}
Let $u\in\mathcal{S}'(\mathbb{R}^{d})$
and $(x,\xi)\in\mathbb{R}^{d}\times S^{d-1}$. Let $\psi\in C_{c}^{\infty}(\mathbb{R}^{d})$
be identically one in some neighborhood of $x$. Then we have:
\begin{itemize}
    \item[(i)] $(x,\xi)$ is an $N$-regular directed point of $u$ iff it is an $N$-regular directed point of $\psi u$.
    \item[(ii)] Let $\varphi\in\mathcal{S}(\mathbb{R}^{d})$ be such that ${\rm supp}(\widehat{\varphi})\subset V$
for some $\emptyset\neq V\Subset\mathcal{O}$. Moreover, assume that the dual action is $V$-microlocally
admissible in direction $\xi$. Then for all $N\in \mathbb {N}$, there exist constants $C_{N}>0$ and $\epsilon>0$, such that the estimate 
\[
\left|W_{\varphi}u(y,h)-\left(W_{\varphi}(\psi u)\right)(y,h)\right|\le C_{N}\|h\|^{N}
\]
holds for all $y\in B_{\epsilon}(x)$ and all $h\in K_{o}(W_{0},V,R_{0})$,
as soon as $R_{0}>0$ and the $\xi$-neighborhood $W_{0}\subset S^{d-1}$
satisfy part \ref{enu:NormOfInverseEstimateOnKo} of the definition
of $V$-microlocal admissibility (Definition \ref{defn:micro_regular}).
\end{itemize} 
\end{lem}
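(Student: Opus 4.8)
The plan is to treat the two parts separately, since they rest on different mechanisms. For part (i), fix $\rho>0$ with $\psi\equiv 1$ on $B_\rho(x)$ and pick any cutoff $\varphi_0\in C_c^\infty(\mathbb{R}^d)$ that is identically one near $x$ with $\mathrm{supp}(\varphi_0)\subset B_\rho(x)$. Then $\varphi_0 u=\varphi_0(\psi u)$ as distributions, since $\psi=1$ on $\mathrm{supp}(\varphi_0)$, so the localized transforms $\widehat{\varphi_0 u}$ and $\widehat{\varphi_0(\psi u)}$ are literally equal. The statement therefore reduces to the cutoff-independence of the definition of an $N$-regular directed point: if \eqref{eqn:decay_cond_reg_dir} holds for one admissible cutoff, it holds for every other such cutoff on a possibly smaller $\xi$-neighborhood. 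I would prove this independence by a convolution argument: given two cutoffs $\varphi_1,\varphi_0$ identically one near $x$, shrink $\mathrm{supp}(\varphi_0)$ so that $\varphi_1\equiv 1$ there; then $\varphi_0 u=\varphi_0(\varphi_1 u)$ and hence $\widehat{\varphi_0 u}=\widehat{\varphi_0}*\widehat{\varphi_1 u}$. Splitting this convolution into the region where the integration variable lies in $C(W_1)$ (using the decay of $\widehat{\varphi_1 u}$ together with the rapid decay of the Schwartz kernel $\widehat{\varphi_0}$) and its complement (where, for $\xi'$ in a subcone $C(W_0)$ with $\overline{W_0}\subset W_1$, one has $|\xi'-\eta|\gtrsim|\xi'|$, so rapid decay beats the polynomial growth of $\widehat{\varphi_1 u}$) shows that the $(1+|\xi'|)^{-N}$ decay persists on $C(W_0)$. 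This transfer of decay between cutoffs is the technical core of part (i).

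For part (ii), I first rewrite the difference as a single wavelet coefficient. Since $\psi$ is real-valued,
\[
W_\varphi u(y,h)-W_\varphi(\psi u)(y,h)=\langle u\mid (1-\psi)\pi(y,h)\varphi\rangle=W_\varphi\!\left((1-\psi)u\right)(y,h).
\]
The decisive structural fact is that $1-\psi$ vanishes on $B_\rho(x)$. Setting $\epsilon:=\rho/2$, for $y\in B_\epsilon(x)$ and $z\in\mathrm{supp}(1-\psi)$ one has $|z-y|\ge\rho-\epsilon=\epsilon$, whence $|h^{-1}(z-y)|\ge |z-y|/\|h\|\ge \epsilon/\|h\|$. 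Thus on the support of the integrand the argument of $\varphi$ is forced to be large precisely when $\|h\|$ is small, which is the regime relevant to $K_o(W_0,V,R_0)$ for large $R_0$.

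I would then bound the coefficient using the finite order $N(u)$ of $u$, namely $|\langle u\mid (1-\psi)\pi(y,h)\varphi\rangle|\le C\,|(1-\psi)\pi(y,h)\varphi|_{N(u)}$, and estimate this Schwartz seminorm through a Leibniz expansion of $\partial^\alpha[(1-\psi)\pi(y,h)\varphi]$ for $|\alpha|\le N(u)$. Derivatives falling on $\psi$ are bounded and supported away from $x$; derivatives falling on $\pi(y,h)\varphi$ contribute factors $|\det(h)|^{-1/2}\|h^{-1}\|^{|\alpha|}$ and are evaluated at points $h^{-1}(z-y)$ of norm at least $\epsilon/\|h\|$. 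Using that $\varphi$ is Schwartz, the decay $(1+|h^{-1}(z-y)|)^{-M}$ simultaneously absorbs the global weight $(1+|z|)^{N(u)}$ and produces an arbitrarily high positive power $\|h\|^M$. Finally I invoke Lemma \ref{lem:related_to_assumptions}\ref{enu:calculations-for-norms}---applicable exactly because $\mathrm{supp}(\widehat{\varphi})\subset V$ and the dual action is $V$-microlocally admissible in direction $\xi$, with $W_0,R_0$ chosen as in Definition \ref{defn:micro_regular}\ref{enu:NormOfInverseEstimateOnKo}---to replace $|\det(h)|^{-1/2}\le C\|h\|^{-d\alpha_1/2}$ and $\|h^{-1}\|^{N(u)}\le C\|h\|^{-\alpha_1 N(u)}$ on $K_o(W_0,V,R_0)$. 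Since these accumulated negative powers of $\|h\|$ are fixed, choosing $M$ large enough yields the net bound $\|h\|^N$ for any prescribed $N$.

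The main obstacle is the bookkeeping in the Leibniz step: one must carefully balance the negative powers of $\|h\|$ coming from $|\det(h)|^{-1/2}$ and from each $\|h^{-1}\|^{|\alpha|}$ against the positive powers harvested from the Schwartz decay of $\varphi$, while keeping every estimate uniform over $y\in B_\epsilon(x)$ and simultaneously taming the polynomial weight $(1+|z|)^{N(u)}$ using that same decay. Conceptually, however, the mechanism is transparent: the support gap between $1-\psi$ and the concentration point $y\approx x$ turns the rapid decay of $\varphi$ into vanishing of arbitrarily high order in $\|h\|$ as $\|h\|\to 0$, which is exactly what overwhelms the polynomial blow-up of $|\det(h)|^{-1/2}$ and $\|h^{-1}\|^{N(u)}$ that the $K_o$-estimates keep under control.
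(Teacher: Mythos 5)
Your proposal is correct and takes essentially the same route as the proofs the paper relies on (it cites \cite[Lemmas 3.2 and 3.4]{FeFuVo} rather than reproving the lemma): part (i) via equality of the localizations $\varphi_0 u=\varphi_0(\psi u)$ plus the standard cutoff-exchange argument estimating $\widehat{\varphi_0}\ast\widehat{\varphi_1 u}$ with a cone-separation split, and part (ii) via rewriting the difference as $W_\varphi((1-\psi)u)$ and playing the support gap $|z-y|\ge\epsilon$ against the Schwartz decay of $\varphi$, with the fixed negative powers $|\det(h)|^{-1/2}$, $\|h^{-1}\|^{N(u)}$ controlled on $K_o(W_0,V,R_0)$ by Lemma \ref{lem:related_to_assumptions}. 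The only cosmetic remark is that the hypothesis ${\rm supp}(\widehat{\varphi})\subset V$ is not itself needed for your seminorm estimate --- it only fixes the set $K_o(W_0,V,R_0)$ on which the microlocal admissibility bounds apply --- so your attribution of applicability to that support condition is a harmless overstatement.
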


\begin{remark}\label{rem:gap_almost_char}
We can now formulate wavelet criteria for $N$-regular directed points in the case of bandlimited wavelets.
Theorem~\ref{thm:almost_char} is an adaptation of \cite[Theorem 3.5]{FeFuVo} to 
the case of finite smoothness. There are {\em two} noteworthy gaps between
the necessary part (a) and the sufficient part (b): One concerns the set of 
matrices $h \in H$ for which the norm estimate holds. For the necessary condition 
the pertinent set is  $K_{i}(W,V,R)$ (which may well be empty), whereas
the sufficient condition requires a decay estimate for the typically larger set $K_o(W,V,R)$.
This gap has already been observed for the criteria in \cite{FeFuVo}. There exist
criteria, the so-called {\em (weak) cone approximation condition(s)}, on the dual action
that allow to determine when this gap can be closed. These are fulfilled, in particular, for
the shearlet dilation group and its relatives. In Section~\ref{sect:transfer}, we prove Theorem~\ref{thm:transfer_decay}, which allows to derive a decay statement for  sets of the type $K_o(W,V,R)$ from decay assumptions for sets of the type $K_i(W',V,R')$. As a result, we are able to close this gap in Corollary~\ref{cor:close-gap} without assuming the cone approximation conditions. Instead, we required that our analyzing wavelet has vanishing moments of sufficiently  large order. 

The second gap concerns the exponents in the decay estimates for wavelet coefficients:
For $N$-regular directed points $(x,\xi)$, the necessary criterion notes a decay
of order $N-\delta_1$, whereas the sufficient criterion requires order $ \alpha_1 N+\delta_2$,
with $\alpha_1,\delta_1,\delta_2>0$. To our current knowledge, this gap can not be closed (although
possibly reduced); see the rather similar results for shearlets obtained in
\cite{Grohs_2011}. We refer to subsection \ref{subsect:shearlet_comparison} for a more detailed discussion.
\end{remark}

\begin{thm}
\label{thm:almost_char}Assume that the dual action is $V$-microlocally
admissible in direction $\xi$, with parameters $\alpha_1, \alpha_2$ as in the definition. Let $u\in\mathcal{S}'(\mathbb{R}^{d})$
and $(x,\xi)\in\mathbb{R}^{d}\times(S^{d-1}\cap \mathcal{O})$. 
\begin{enumerate}[label=(\alph*)]
\item \label{enu:RegularDirectedPointNecessaryCondition}If $(x,\xi)$
is an $N$-regular directed point of $u$, then there exists a neighborhood
$U$ of $x$, some $R>0$ and a $\xi$-neighborhood $W\subset S^{d-1}\cap \mathcal{O}$
such that for \emph{all} admissible $\psi\in\mathcal{S}(\mathbb{R}^{d})$
with ${\rm supp}(\widehat{\psi})\subset V$, the following estimate
holds: 
\[
\exists\, C >0\,\forall\, y\in U\,\forall\, h\in K_{i}(W,V,R)~:~|W_{\psi}u(y,h)|\le C \cdot\|h\|^{N-\alpha_1 d/2}\!\!.
\]
For each such $\psi$, we even have 
\[
\qquad\qquad \exists\, C>0\,\forall\, y\in U\,\forall\, h\in K_{i}(W,\,\widehat{\psi}^{-1}\left(\mathbb{C}\setminus\left\{ 0\right\} \right),\, R)~:~|W_{\psi}u(y,h)|\le C\cdot\left\Vert h\right\Vert ^{N-\alpha_1 d/2}\!\!.
\]

\item \label{enu:RegularDirectedPointSufficientCondition}Let $\psi\in\mathcal{S}(\mathbb{R}^{d})$
be admissible with ${\rm supp}(\widehat{\psi})\subset V$.  
Assume that there exist a neighborhood $U$ of $x$, a number $R>0$ and a $\xi$-neighborhood $W\subset S^{d-1}\cap \mathcal{O}$ such that
\[\exists\, C>0\,\forall\, y\in U\,\forall\, h\in K_{o}(W,V,R)~:~|W_{\psi}u(y,h)|\le C \cdot\left\Vert h\right\Vert^{\alpha_1 N+\frac{3}{2} \alpha_1 d + \alpha_2}
\]
Then $(x,\xi)$ is an $N$-regular directed point of $u$. 
\end{enumerate}
\end{thm}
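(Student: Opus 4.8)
## Proof Plan

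The strategy for both parts is to move between the wavelet transform side and the Fourier side of the localized distribution, exploiting the reconstruction formula \eqref{eqn:waverec_dual} for the sufficient direction and the decay of $\widehat{\varphi u}$ for the necessary direction. Throughout, the norm estimates in Lemma~\ref{lem:related_to_assumptions} are the bridge that converts between powers of $\|h\|$ and powers of frequency $|h^{-T}\xi'|$, and this is where the exponent bookkeeping (and hence the gap $\alpha_1 N + \tfrac{3}{2}\alpha_1 d + \alpha_2$ versus $N - \alpha_1 d/2$) originates.

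For part \ref{enu:RegularDirectedPointNecessaryCondition}, the plan is: first use Lemma~\ref{lem:reg_points_loclem:wavelet_local}(i) to replace $u$ by $\varphi u$ for a suitable cutoff $\varphi \in C_c^\infty$ identically one near $x$, so that we may assume $\widehat{\varphi u}$ satisfies the polynomial decay \eqref{eqn:decay_cond_reg_dir} of order $N$ on a cone $C(W,R)$. Then I would write $W_\psi u(y,h) = \langle u \mid \pi(y,h)\psi\rangle$ via Plancherel on the Fourier side, using \eqref{eq:QuasiRegularOnFourierSide}, so that the integrand is supported in $h^{-T}\,\mathrm{supp}(\widehat\psi) \subseteq h^{-T}V$. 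The key geometric point is that for $h \in K_i(W,V,R)$, this support lies entirely inside the cone $C(W,R)$ where the regularity estimate is available. One then estimates $|W_\psi u(y,h)| = |\det(h)|^{1/2}\big|\int \widehat{\varphi u}(\xi)\,\overline{\widehat\psi(h^T\xi)}\,e^{2\pi i\langle y,\xi\rangle}\,\mathrm{d}\xi\big|$ by pulling the decay bound $(1+|\xi|)^{-N}$ out of the integral and using Lemma~\ref{lem:related_to_assumptions}\ref{enu:XiEstimatedByH} to relate $|\xi|^{-1}$ on the support to $\|h\|$; the factor $|\det(h)|^{1/2}$ together with the change of variables $\eta = h^T\xi$ produces the loss $\|h\|^{-\alpha_1 d/2}$ in the exponent. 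The second displayed estimate follows identically, since the only property of $V$ used is that it contains the support of $\widehat\psi$, so it may be replaced by $\widehat\psi^{-1}(\mathbb{C}\setminus\{0\})$.

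For part \ref{enu:RegularDirectedPointSufficientCondition}, the plan is to reconstruct $\widehat{\varphi u}(\xi)$ for $\xi \in C(W,R)$ from the decay hypothesis using \eqref{eqn:waverec_dual}, and to verify \eqref{eqn:decay_cond_reg_dir} directly. After again localizing via Lemma~\ref{lem:reg_points_loclem:wavelet_local}(i) and controlling the error between $W_\psi u$ and $W_\psi(\varphi u)$ by Lemma~\ref{lem:reg_points_loclem:wavelet_local}(ii) (which contributes negligibly, being $O(\|h\|^M)$ for any $M$), the main term is the integral over $y \in \mathbb{R}^d$ and $h \in H$ of $W_\psi(\varphi u)(y,h)\cdot \mathcal{F}[\pi(y,h)\psi](\xi)$ against $|\det h|^{-1}$. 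The crucial observation is that $\mathcal{F}[\pi(y,h)\psi](\xi)$ is supported, as a function of $h$, exactly on the set where $h^{-T}V \ni \xi$, i.e.\ on $K_o(W,V,R)$, so only the dilations for which the decay hypothesis holds contribute. I would then split the integral over $h \in K_o(W,V,R)$, insert the decay bound $\|h\|^{\alpha_1 N + \frac{3}{2}\alpha_1 d + \alpha_2}$, integrate out $y$ using the Schwartz decay of $\pi(y,h)\psi$ in space, and convert the resulting power of $\|h\|$ into a power of $|\xi|$ via Lemma~\ref{lem:related_to_assumptions}\ref{enu:HEstimatedByXi}, which supplies $\|h\| \le C\,|h^{-T}\xi'|^{-1/\alpha_1}$.

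The main obstacle is the convergence and exponent accounting in part \ref{enu:RegularDirectedPointSufficientCondition}: one must ensure the $h$-integral over $K_o$ converges, which is precisely where condition \ref{enu:NormIntegrability} (integrability of $\|h\|^{\alpha_2}$ over $K_o$) enters and accounts for the $+\alpha_2$ in the exponent. The terms $|\det(h)|^{-1/2}$ from the wavelet normalization, $|\det(h)|^{1/2}$ from \eqref{eq:QuasiRegularOnFourierSide}, and the factor $|\det(h)|^{-1}$ in the Haar measure must be tracked together with the estimate \eqref{eq:InverseDeterminantEstimate} relating $|\det(h)|^{-1}$ to $\|h\|^{-d\alpha_1}$; the interplay of these determinant factors with the conversion of $\|h\|$ into $|\xi|$ is what forces the inflation of $N$ by the factor $\alpha_1$ and the additive $\tfrac{3}{2}\alpha_1 d$ correction. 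Getting these bookkeeping constants to line up so that the final bound reads $(1+|\xi|)^{-N}$ uniformly for $\xi \in C(W,R)$ is the delicate part; the rest is a routine adaptation of \cite[Theorem 3.5]{FeFuVo}.
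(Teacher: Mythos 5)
Your plan for part (a) coincides with the paper's proof (which reproduces \cite[Theorem 3.5(a)]{FeFuVo} verbatim): localize, pass to the Fourier side via \eqref{eq:QuasiRegularOnFourierSide}, use that $h\in K_i$ forces the support of $\widehat{\psi}(h^T\cdot)$ into the cone, and extract the exponent $N-\alpha_1 d/2$ from the change of variables together with \eqref{eq:InverseDeterminantEstimate}. One small imprecision: Lemma \ref{lem:reg_points_loclem:wavelet_local}(i) transfers \emph{regularity}, not coefficient bounds; to pass from the estimate for $W_\psi(\varphi u)$ to the asserted estimate for $W_\psi u$ you need the difference estimate of Lemma \ref{lem:reg_points_loclem:wavelet_local}(ii), which you invoke only in part (b).

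In part (b), however, there is a genuine gap: you insert the hypothesis bound $|W_{\psi}u(y,h)|\le C\|h\|^{\alpha_1 N+\frac{3}{2}\alpha_1 d+\alpha_2}$ into the reconstruction integral \eqref{eqn:waverec_dual} as if it were available for all $y\in\mathbb{R}^d$, but it holds only for $y\in U$ (and its transfer to $W_\psi(\varphi u)$ via Lemma \ref{lem:reg_points_loclem:wavelet_local}(ii) only on a ball $B_\epsilon(x)$). Your proposal never splits the $y$-integration into $U$ and its complement, and on the complement the only generally available bound is Lemma \ref{lem:wc_decay_general}(b), whose factor $(1+\|h^{-1}\|)^{N(u)}\ge \|h\|^{-N(u)}$ supplies \emph{negative} powers of $\|h\|$. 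Since on the support of the integrand one has $h^T\xi\in V$ and hence $\|h\|\le C_2|\xi|^{-1/\alpha_1}$ by Lemma \ref{lem:related_to_assumptions}\ref{enu:HEstimatedByXi}, those negative powers of $\|h\|$ convert into \emph{positive} powers of $|\xi|$, so an absolute-value estimate of the outer region yields a bound that grows in $|\xi|$ instead of the required $(1+|\xi|)^{-N}$. Note also that $|\mathcal{F}[\pi(y,h)\psi](\xi)|=|\det h|^{1/2}|\widehat{\psi}(h^T\xi)|$ is independent of $y$, so "the Schwartz decay of $\pi(y,h)\psi$ in space" does not act on the frequency-side integrand; any $y$-decay must come from $W_\psi(\varphi u)(y,h)$ itself, and this decay carries exactly the problematic $h$-dependence just described. (The analogue of Lemma \ref{lem:wc_decay_general}(d), which does provide arbitrary positive powers $\|h\|^M$, holds for Schwartz functions, not for the distribution $\varphi u$.)

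The missing idea is a spatial-separation argument: one must choose the cutoff with $\operatorname{supp}\varphi$ compactly contained in the region where the coefficients are controlled, split the $y$-integral, and for $y$ outside a slightly larger ball exploit the distance between $y$ and $\operatorname{supp}(\varphi u)$ — via $|h^{-1}(z-y)|\ge |z-y|/\|h\|$ — to gain arbitrary positive powers $\|h\|^M$ together with rapid $y$-decay, overcoming the $(1+\|h^{-1}\|)^{N(u)}$ loss. This is precisely where the paper's proof does its real work, though it runs the argument in space rather than in frequency: it forms the partial synthesis $\eta$ over $K_o(W,V,R)$ in \eqref{eqn:def_eta}, evaluates it pointwise near $x$ as $\kappa=\kappa_1+\kappa_2$, shows that $\kappa_2$ (the contribution of $y\notin B_{\varepsilon_1}(x)$) is $C^\infty$ on $B_{\varepsilon_1/2}(x)$ by exactly this separation mechanism, and deduces $\kappa_1\in C^N$ from the decay hypothesis — concluding that $u$ is locally a $C^N$ function rather than verifying \eqref{eqn:decay_cond_reg_dir} directly. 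Your exponent bookkeeping on the region where the hypothesis does apply is consistent with the paper's (the net determinant factor $|\det h|^{-1/2}$ costs $\frac{1}{2}\alpha_1 d$, the factor $\|h\|^{\alpha_1 N}$ converts to $|\xi|^{-N}$, and the leftover $\|h\|^{\alpha_2}$ is absorbed by Definition \ref{defn:micro_regular}\ref{enu:NormIntegrability}), but without the splitting and the separation argument the proof as proposed does not go through.
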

\begin{proof}
Part (a) is provided by a verbatim reproduction of the proof of \cite[Theorem 3.5(a)]{FeFuVo}. In a similar way, part (b) is obtained by a straightforward adaptation of the argument for \cite[Theorem 3.5(b)]{FeFuVo}. Since this argument is more involved, we provide a somewhat more detailed sketch. 

For the proof of part \ref{enu:RegularDirectedPointSufficientCondition},
observe that we may assume $u$ to be compactly supported by Lemma
\ref{lem:reg_points_loclem:wavelet_local}. 
Let $\varepsilon_1>0$ be such that $ B_{\varepsilon_{1}}(x)\subseteq U$, and assume that $\varepsilon_1$ satisfies the statement in Lemma~\ref{lem:reg_points_loclem:wavelet_local}(ii).
By assumption,
\begin{equation}
|W_{\psi}u(y,h)|\le C\|h\|^{\alpha_1 (N+\frac{3}{2}d) + \alpha_2}\label{eq:SufficientConditionRepeated}
\end{equation}
holds for all  $y\in B_{\varepsilon_{1}}(x)$ and
$h\in K_{o}(W,V,R)$. With $R_{0}>0$ and $W_{0}\subset S^{d-1}\cap\mathcal{O}$
as in Definition \ref{defn:micro_regular}, we may assume $W\subset W_{0}\subset\mathcal{O}$
and $R>R_{0}$. In particular, this implies $C\left(W,R\right)\subset\mathcal{O}$,
thanks to Lemma~\ref{assume:proper_dual}\ref{enu:DualActionContainsRays}.

Let 
\begin{equation}
\eta:=\int_{\mathbb{R}^{d}}\int_{K_{o}(W,V,R)}W_{\psi}u(y,h)\cdot\pi(y,h)\psi\,\frac{{\rm d}h}{|{\rm det}(h)|}\,{\rm d}y,\label{eqn:def_eta}
\end{equation}
where the integral defining $\eta$ is to be understood in the weak
sense, i.e., given $\varphi\in\mathcal{S}(\mathbb{R}^{d})$, we let
\begin{align}
\langle\eta\mid\varphi\rangle & =\int_{\mathbb{R}^{d}}\int_{\mathbb{R}^{d}}\int_{K_{o}(W,V,R)}W_{\psi}u(y,h)[\pi(y,h)\psi](z)\overline{\varphi(z)}\frac{{\rm d}h}{|{\rm det}(h)|}\,{\rm d}y{\rm d}z\nonumber \\
 & =\int_{\mathbb{R}^{d}}\int_{K_{o}\left(W,V,R\right)}W_{\psi}u\left(y,h\right)\cdot\overline{\left(W_{\psi}\varphi \right)\left(y,h\right)}\,\frac{{\rm d}h}{|{\rm det}(h)|}\,{\rm d}y.\label{eqn:weak_def_eta}
\end{align}
Using parts (b) and (d)  of Lemma \ref{lem:wc_decay_general}  together with Lemma~\ref{lem:related_to_assumptions}, it can be shown that $\eta$ is a well-defined
element of $\mathcal{S}'(\mathbb{R}^{d})$, and the integral in equation
\eqref{eqn:weak_def_eta} converges absolutely for all $\varphi\in\mathcal{S}\left(\mathbb{R}^{d}\right)$. 
Next, we show that $u=\eta$. Indeed, for the compactly supported tempered distribution $u$, its Fourier transform $\widehat{u}$ can be represented as a continuous function on ${\mathbb R}^d$, which has an explicit formula on ${\mathcal O}$ given as follows:
$$\widehat{u}(\xi')=\int_{{\mathbb R}^d}\int_H W_\psi u(y,h){\mathcal F}[\pi(y,h)\psi](\xi') \frac{{\rm d} h}{|\det h|}{\rm d}y,$$
for all $\xi'\in {\mathcal O}$.  Since ${\mathcal O}$ is a co-null set in $\mathbb{R}^{d}$, for every $\varphi\in {\mathcal S}({\mathbb R}^d)$ we have
\begin{eqnarray*}
\langle u\mid \varphi \rangle&=&\langle \widehat{u}\mid \FF^{-1}\varphi \rangle=\int_{{\mathcal O}} \widehat{u}(\xi')\overline{\FF^{-1}\varphi(\xi')}{\rm d}\xi'\\
&=&\int_{{\mathcal O}} \int_{{\mathbb R}^d}\int_H W_\psi u(y,h){\mathcal F}[\pi(y,h)\psi](\xi') \overline{\FF^{-1}\varphi(\xi')}\frac{{\rm d} h}{|\det h|}{\rm d}y {\rm d}\xi'\\
&=&\int_{{\mathcal O}} \int_{{\mathbb R}^d}\int_{K_o(W,V,R)} W_\psi u(y,h){\mathcal F}[\pi(y,h)\psi](\xi') \overline{\FF^{-1}\varphi(\xi')}\frac{{\rm d}h}{|\det h|}{\rm d}y{\rm d}\xi'\\
&=&\int_{{\mathbb R}^d}\int_{K_o(W,V,R)} W_\psi u(y,h)\left(\int_{{\RR^d}} {\mathcal F}[\pi(y,h)\psi](\xi') \overline{\FF^{-1}\varphi(\xi')}{\rm d}\xi'\right)\frac{ {\rm d} h}{|\det h|}{\rm d}y\\
&=&\int_{{\mathbb R}^d}\int_{K_o(W,V,R)} W_\psi u(y,h)\left(\int_{{\RR^d}}[\pi(y,h)\psi](z) \overline{{\varphi(z)}}{\rm d} z\right)\frac{{\rm d} h}{|\det h|}{\rm d}y\\
&=&\int_{{\RR^d}}\int_{{\mathbb R}^d}\int_{K_o(W,V,R)} W_\psi u(y,h)[\pi(y,h)\psi](z) \overline{\varphi(z)}\frac{{\rm d} h}{|\det h|} {\rm d}y{\rm d}z\\
&=&\langle \eta\mid\varphi \rangle.
\end{eqnarray*}

To obtain the desired result, we will show that in a suitable neighborhood
of $x$, the distribution $u$ is given by a $C^{N}$-function. Consider  the auxiliary function $\kappa$ obtained by pointwise evaluation of the
integral defining $\eta$ weakly in equation \eqref{eqn:def_eta}, i.e.
\[
\kappa:B_{\varepsilon_{1}/2}(x)\to\mathbb{C},\, z\mapsto\int_{\mathbb{R}^{d}}\int_{K_{o}(W,V,R)}W_{\psi}u(y,h)[\pi(y,h)\psi](z)\,\frac{{\rm d}h}{\left|\det\left(h\right)\right|}\,{\rm d}y.
\]
Note that  the convergence that is crucial to equate pointwise and weak definitions is guaranteed by the assumptions on wavelet coefficient decay.
We then write $\kappa = \kappa_1 + \kappa_2$, where 
\[
\kappa_{1}(z):=\int_{B_{\varepsilon_{1}}(x)}\int_{K_{o}(W,V,R)}W_{\psi}u(y,h)[\pi(y,h)\psi](z)\,\frac{{\rm d}h}{\left|\det\left(h\right)\right|}\,{\rm d}y.
\]
Note that $\kappa_2$ is obtained by integration over the complement of the ball $B_{\varepsilon_{1}}(x)$, where we make no specific assumptions on the wavelet coefficient decay. 
Using the proof of Theorem~3.5 of \cite{FeFuVo}, we have that $\kappa_2$ is $C^\infty$ inside $B_{\varepsilon_{1}/2}(x)$, and 
$\kappa_1$ is $N$-fold differentiable as soon as the decay order $M$ of the wavelet coefficients fulfills $M\geq  \alpha_1 (N+\frac{3}{2}d) + \alpha_2$. 
Hence $\kappa$ is $C^N$, and for every $\varphi \in \mathcal{S}(\mathbb{R}^d)$ with ${\rm supp}(\varphi) \subset B_{\epsilon_1/2}(x)$, we clearly have 
\[
 \langle \eta | \varphi \rangle = \int_{\mathbb{R}^d} \kappa(z) \overline{\varphi(z)} dz~.
\]
\end{proof}

\section{Towards an extension to compactly supported wavelets}
\label{sect:towards_compact_support}

As demonstrated in \cite{Grohs_2011} for shearlets in two dimensions, it is possible to sometimes replace the assumption that $\widehat{\psi}$ is compactly supported by suitable finite vanishing moment conditions, and still obtain near characterizations of $N$-regular directed points.  This allows to use wavelets that are compactly supported {\em in space} rather than in frequency. The chief purpose of this section is to prepare the ground for the desired generalization, by highlighting and clarifying various technical issues that arise in the process.

The idea underlying the extension is reasonably simple: Assume that we have an estimate
\begin{equation} \label{eqn:wdec_psi}
 |W_\psi u(y,h)| \le  C \| h \|^N ~,
\end{equation} valid on a set $B_{\epsilon}(x) \times K_{i/o}(W,V,R)$, with a wavelet $\psi$ satisfying ${\rm supp}(\widehat{\psi}) \subset V \Subset \mathcal{O}$. 
Assume that $\eta \in \mathcal{S}(\mathbb{R}^d)$ is a second admissible wavelet. Under suitable convergence assumptions, it is possible to plug the inversion formula 
\[ \eta = \frac{1}{C_\psi} \int_G \WW_\psi \eta (x,h) \pi(x,h) \psi \dd(x,h)
\]
into the definition of the wavelet transform $\WW_\eta u$, and obtain that the wavelet transforms $\WW_\eta u$ and $\WW_\psi u$ are related by a convolution product: 
\[
 \WW_\eta u = \frac{1}{C_\psi} \WW_\psi u \ast \WW_\eta \psi~.
\] 
Subsection~\ref{subsect:van_moment} will be devoted to formulating the explicit criteria needed for the above identity to hold.
Hence in order to transfer decay estimates from $\WW_\psi u$ to $\WW_\eta u$, we need to understand which conditions on the cross-kernel $\WW_\eta \psi$ allow to transfer the wavelet coefficient decay from $\WW_\psi u$ to $\WW_\eta u$, possibly on a slightly smaller set $K'$. Note also that the same reasoning, with $\WW_\psi \eta$ replacing $\WW_{\eta} \psi$, would also allow to transfer decay estimates for $\WW_\eta u$ to $\WW_\psi u$. This allows us to extend both necessary and sufficient wavelet coefficient decay conditions for $N$-regular directed points.

Our proof strategy also requires the use of methods to estimate and quantify the decay of the cross-kernel $W_\eta \psi$; here we will take advantage of previous work developed in \cite{Fu_coorbit,Fu_atom,FuRe}. However, one further aspect that needs to be addressed in connection with vanishing moment conditions is the fact that these conditions can be fulfilled by {\em real-valued wavelets}. Due to the symmetry condition that real-valuedness of $\psi$ imposes on $\widehat{\psi}$, these wavelets are generally unable to distinguish between directions $\xi,-\xi$, i.e., our wavelet-based near characterizations will describe {\em unsigned} $N$-regular directed points, in the sense of Definition \ref{defn:unsigned_reg_dir} below. We emphasize that this restriction is not due to a general shortcoming of our approach, and that it is also present (rather more implicitly, but not less stringently) in the precursor papers \cite{KuLa,Grohs_2011}. We refer to Subsection \ref{subsect:shearlet_comparison} for a more detailed explanation. But now we start the discussion of unsigned regular directed point.

\subsection{Unsigned regular directed points and real-valued distributions}

\label{subsect:unsigned}

As already mentioned, attempting to derive decay statements from vanishing moment conditions opens the door to the use of real-valued wavelets. 
For these wavelets $\psi$ however, one has $\widehat{\psi}(-\xi) = \widehat{\psi}^\ast (\xi) := \overline{\widehat{\psi}(\xi)}$. Hence by construction, such a wavelet will not be able to distinguish the decay behaviour inside a cone $C$ containing $\xi$ and the cone $-C$, containing $-\xi$. As a consequence, we will restrict attention to real-valued wavelets in the following, and will pay the price that the behaviour in directions $\xi$ and $-\xi$ is indistinguishable. The following definition adapts the notion of $N$-regular directed points to this setting. While we expect that these notions have already been studied in some detail elsewhere, we have not been able to locate a convenient source.

\begin{defn} \label{defn:unsigned_reg_dir}
 For $(x,\xi),(x',\xi')$ we write $(x,\xi) \sim (x',\xi')$ iff $x=x', \xi = \pm \xi'$, and use $\mathbb{R}^d \times S^{d-1}/ \pm$ to denote the set of equivalence classes. 
 
 Given $N \in \mathbb{N}$, $u \in {\mathcal S}'(\mathbb{R}^d)$ and $(x,\pm \xi) \in \mathbb{R}^d \times S^{d-1} / \pm$, we call $(x,\pm \xi)$ an {\bf unsigned $N$-regular directed point of $u$} if both $(x,\xi)$ and $(x,-\xi)$ are $N$-regular directed points of $u$. We call $(x,\pm \xi)$ {\bf unsigned regular directed point} if $(x,\pm \xi)$ is an unsigned $N$-regular directed point for all $N$, or equivalently, if $(x,\xi)$ and $(x,-\xi)$ are regular directed points of $u$.
\end{defn}

It turns out that this notion is closely related to the $N$-regular directed points of the real and imaginary parts of $u$. We first recall the necessary terminology: Given $u \in {\mathcal S}'(\mathbb{R}^d)$, we define $\overline{u} \in {\mathcal S}'(\mathbb{R}^d)$ as $\langle \overline{u}| f \rangle = \overline{ \langle u| \overline{f} \rangle}$; recall that we use $\LL^2$-notation here. 
The \textit{real} and \textit{imaginary parts} of $u$ are then defined in the usual manner as ${\rm Re}(u) = \frac{1}{2}(u + \overline{u})$ and ${\rm Im}(u) = \frac{1}{2i}(u - \overline{u})$. 

It follows that $u = {\rm Re}(u) + i {\rm Im}(u)$, and for real-valued functions $f$, one has 
\[
\langle {\rm Re} (u)| f \rangle = {\rm Re} \langle u| f \rangle~,~  \langle {\rm Im} (u)| f \rangle = {\rm Im} \langle u| f \rangle~.
\] We call $u$ \textit{real-valued} if $u = {\rm Re}(u)$. It is easy to see that a tempered distribution is real-valued iff it maps real-valued Schwartz functions to real values. 

In order to express the just-described operations on tempered distributions in Fourier domain, we need to extend the involution $f^*(x) := \overline{f(-x)}$ from functions to tempered distributions, by letting
\[
\langle u^* | f \rangle = \overline{\langle u| f^* \rangle} ~.
\]

Part (b) of the following lemma makes clear that for real-valued distributions, the distinction between signed and unsigned $N$-regular directed points is moot, and part (c) relates unsigned $N$-regular directed points of $u$ to $N$-regular directed points of ${\rm Re}(u)$ and ${\rm Im}(u)$. 

\begin{lem} \label{lem:unsigned_dir_points}
Let $u \in \mathcal{S}'(\mathbb{R}^d)$.
\begin{enumerate}
    \item[(a)] $\widehat{\overline{u}} = (\widehat{u})^\ast$. 
    \item[(b)] If $u$ is real-valued, then its Fourier transform $\widehat{u}$ fulfills $\widehat{u} = (\widehat{u})^*$. As a consequence, $(x,-\xi)$ is a regular directed point of $u$ iff $(x,\xi)$ is. 
    \item[(c)] $(x, \pm \xi)$ is an unsigned $N$-regular directed point of $u$ iff $(x,\xi)$ is an $N$-regular directed point of ${\rm Re}(u)$ and ${\rm Im}(u)$, or equivalently, iff
 $(x,\pm \xi)$ is an unsigned $N$-regular directed point of ${\rm Re}(u)$ and ${\rm Im}(u)$.
\end{enumerate}
\end{lem}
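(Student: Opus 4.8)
The plan is to establish part (a) by a direct computation in the sesquilinear pairing, and then to bootstrap parts (b) and (c) from it, using only elementary properties of $N$-regular directed points. For part (a), I would first record the classical function-level identity: for $f\in\mathcal S(\mathbb R^d)$ one has $\widehat{\overline f}=(\widehat f)^\ast$, since $(\widehat f)^\ast(\xi)=\overline{\widehat f(-\xi)}=\int_{\mathbb R^d}\overline{f(x)}\,e^{-2\pi i\langle x,\xi\rangle}\,\dd x=\widehat{\overline f}(\xi)$. To lift this to distributions I would unwind the three defining relations, all in the sesquilinear pairing: the definition of $\overline u$, the involution $(\cdot)^\ast$, and the Plancherel-type identity $\langle u\mid f\rangle=\langle\widehat u\mid\widehat f\rangle$ defining the distributional Fourier transform. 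Concretely, for a test function $g$ I would compute $\langle\widehat{\overline u}\mid g\rangle$ by moving the Fourier transform onto $g$, applying the definition of $\overline u$, using the elementary identities $\overline{\mathcal F^{-1}g}=\mathcal F(\overline g)$ and $\mathcal F^2=(\text{reflection})$, and finally recognizing the outcome as $\overline{\langle\widehat u\mid g^\ast\rangle}=\langle(\widehat u)^\ast\mid g\rangle$.

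For part (b), the identity $\widehat u=(\widehat u)^\ast$ is immediate from (a) upon substituting $u=\overline u$. For the statement about directed points, I would first reduce to the compactly supported case: choosing a real-valued cutoff $\varphi\in C_c^\infty(\mathbb R^d)$ identically one near $x$, Lemma~\ref{lem:reg_points_loclem:wavelet_local}(i) shows that $(x,\pm\xi)$ is $N$-regular for $u$ iff it is for the real-valued, compactly supported distribution $v:=\varphi u$ (note $\overline{\varphi u}=\varphi\,\overline u=\varphi u$). For any further real cutoff $\varphi'$ the function $\widehat{\varphi' v}$ is continuous and Hermitian, $\widehat{\varphi' v}(-\xi')=\overline{\widehat{\varphi' v}(\xi')}$, so $|\widehat{\varphi' v}(\xi')|=|\widehat{\varphi' v}(-\xi')|$. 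Since $C(-W)=-C(W)$ is a $(-\xi)$-neighborhood cone whenever $W$ is a $\xi$-neighborhood cone, and $(1+|\xi'|)=(1+|-\xi'|)$, the decay condition \eqref{eqn:decay_cond_reg_dir} holds on $C(W)$ iff it holds on $C(-W)$, which yields the equivalence of $(x,\xi)$ and $(x,-\xi)$.

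For part (c), I would isolate two reusable facts. First, the set of tempered distributions for which a fixed $(x,\xi)$ is an $N$-regular directed point is a vector space: given witnesses for $u_1,u_2$ one intersects the two cones and uses a single real cutoff $\varphi$ (legitimate because regularity of a directed point is independent of the chosen cutoff, by Lemma~\ref{lem:reg_points_loclem:wavelet_local}(i)), so that $\widehat{\varphi(au_1+bu_2)}=a\widehat{\varphi u_1}+b\widehat{\varphi u_2}$ and the triangle inequality give the decay. Second, exactly as in part (b), fact (a) yields that $(x,\xi)$ is $N$-regular for $\overline u$ iff $(x,-\xi)$ is $N$-regular for $u$. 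With these in hand I would prove the main equivalence by writing ${\rm Re}(u),{\rm Im}(u)$ as linear combinations of $u,\overline u$, together with $u={\rm Re}(u)+i\,{\rm Im}(u)$: if $(x,\pm\xi)$ is unsigned $N$-regular for $u$, then $(x,\xi)$ is $N$-regular for both $u$ and (by the second fact) $\overline u$, hence for ${\rm Re}(u)$ and ${\rm Im}(u)$; conversely, if ${\rm Re}(u),{\rm Im}(u)$ are $N$-regular at $(x,\xi)$ then so is $u$, and since ${\rm Re}(u),{\rm Im}(u)$ are real-valued, part (b) gives $N$-regularity at $(x,-\xi)$ as well, whence $u$ is $N$-regular at both $\pm\xi$. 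The closing ``equivalently'' clause is just part (b) applied to the real-valued distributions ${\rm Re}(u),{\rm Im}(u)$, for which $N$-regularity at $(x,\xi)$ and unsigned $N$-regularity at $(x,\pm\xi)$ coincide.

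The computations are all elementary, so I expect no deep obstacle; the genuinely error-prone points are the conjugation bookkeeping in the sesquilinear pairing for part (a), and, in part (c), making sure the existential cutoff and cone in the definition of a regular directed point can be chosen uniformly across the finitely many distributions $u,\overline u,{\rm Re}(u),{\rm Im}(u)$, which is exactly what cutoff-independence (via Lemma~\ref{lem:reg_points_loclem:wavelet_local}(i)) and the vector-space property provide.
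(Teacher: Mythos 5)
Your proposal is correct and takes essentially the same route as the paper's proof: part (a) by the standard sesquilinear-pairing computation (which the paper leaves as ``straightforward''), part (b) via a real-valued cutoff combined with the Hermitian symmetry furnished by (a), and part (c) by real cutoffs, conjugation, linearity, and the converse from (b) together with $u = {\rm Re}(u) + i\,{\rm Im}(u)$. Your modular packaging of (c) --- a vector-space property plus the fact that $(x,\xi)$-regularity of $\overline{u}$ is equivalent to $(x,-\xi)$-regularity of $u$ --- is just an abstraction of the paper's inline computation $\left(\varphi\,{\rm Re}(u)\right)^\wedge(\xi') = \frac{1}{2}\left( \widehat{\varphi u}(\xi') + \overline{\widehat{\varphi u}(-\xi')} \right)$, and your explicit attention to choosing the cutoff real-valued (via cutoff-independence) makes rigorous a point the paper only asserts.
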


\begin{proof}
Part (a) is straightforward. Part (b) follows from the fact that the cut-out function $\varphi$ in the definition of $N$-regular directed points can be chosen real-valued, and then (a) applies to the product $\varphi u$, and yields the desired statement. 

For part (c), assume that $(x,\pm \xi)$ is an unsigned $N$-regular directed point of $u$, i.e., we have 
decay of $\widehat{\varphi u}$ of order $N$ inside $C \cup -C$, for a suitable cone $C$ in direction $\xi$. 
Again we choose the cut-out function $\varphi$ to be real-valued, and then $\varphi \overline{u} = \overline{\varphi u}$. 
So ${\rm Re}(\varphi u) = \varphi ({\rm Re}(u))$, and the same holds for the imaginary part as well.
Using this fact, together with part (a), we get
\[
\left(\varphi {\rm Re}(u)\right)^\wedge (\xi') =\left({\rm Re}(\varphi u)\right)^\wedge (\xi')= \frac{1}{2} \left( \widehat{\varphi u} + \widehat{\overline{\varphi u}} \right)(\xi') = \frac{1}{2} \left( \widehat{\varphi u}(\xi') + \overline{\widehat{\varphi u}(-\xi')} \right).
\] 
So, the same decay behaviour for $\left(\varphi {\rm Re}(u) \right)^\wedge$, and similarly for $\left( \varphi {\rm Im}(u)\right)^\wedge$, holds in 
$C\cup -C$.
This establishes that $(x,\xi)$ and $(x,-\xi)$ are $N$-regular directed points of ${\rm Re}(u)$ and ${\rm Im}(u)$; or equivalently, that
$(x,\pm \xi)$ is an unsigned $N$-regular directed point of ${\rm Re}(u)$ and ${\rm Im}(u)$.

The converse direction follows directly from part (b) and $u = {\rm Re}(u) + i {\rm Im}(u)$.
\end{proof}

The next somewhat technical notion formulates a fairly subtle structural assumption on the dilation group $H$ that initially seems a bit opaque. We will repeatedly comment on the role of this assumption to make its impact on our arguments clear. 
\begin{defn}
We call $H_0< H$ a {\bf half-space adapted subgroup} if it is an open subgroup with the property that, for any $\xi_0 \in \mathcal{O}$, one has 
$\mathcal{O} = (H_0^T \xi_0) \cup -(H_0^T \xi_0) $, where the union is {\em disjoint}.
\end{defn}

As a pertinent class of examples with this property we mention the generalized shearlet dilation groups featured in Section \ref{sect:shearlet}. On the other hand, it is clear that not all admissible dilation groups have a half-space adapted subgroup. Note that by definition $H_0$ is necessarily a proper open subgroup of index 2. The standard higher-dimensional wavelet dilation groups $H = \mathbb{R}^+ SO(\mathbb{R}^d)$ are connected, and therefore do not possess a half-space adapted subgroup. 

The chief purpose of introducing the notion of half-space adapted subgroups is that in certain contexts, the integration domain $H$ may be swapped for the smaller subset $H_0$, and this will be instrumental in guaranteeing the necessary decay condition. The following lemma spells this phenomenon out for the inversion formula associated to real-valued wavelets. 

\begin{lem} \label{lem:integrate_halfspace}
Let $H_0< H$ denote a half-space adapted subgroup, and let $\psi$ denote a real-valued admissible wavelet for $H$. Then, for all $f \in \LL^2(\mathbb{R}^d)$, one has 
\[
f = \frac{2}{C_\psi} \int_{G_0} \WW_\psi f (x,h) \pi(x,h) \psi \dd(x,h) ~,  
\] where $G_0 = \mathbb{R}^d \rtimes H_0$.
\end{lem}
\begin{proof}
Writing $\mathcal{O}^+ = H_0^T \xi_0$, one has $\mathcal{O} = \mathcal{O}^+ \cup - \mathcal{O}^+$. The admissibility assumption on $\psi$ entails that
\[
C_\psi = \int_H |\widehat{\psi}(h^T \xi_0)|^2 \dd h < \infty~, 
\] and accordingly, that $\| \WW_\psi f \|_{\LL^2(G)}^2 = C_\psi \| f \|^2$, as well as 
\[
f = \frac{1}{C_\psi} \int_G \WW_\psi f(x,h) \pi(x,h) \psi \dd(x,h)~. 
\]
By assumptions on $\psi$ and $H_0$, we have
\[
\int_{H_0} |\widehat{\psi}(h^T \xi_0)|^2\dd h = \int_{H_0} |\widehat{\psi}(-h^T \xi_0)|^2 \dd h = \frac{C_\psi}{2}~. 
\] Since $\mathcal{O}^+ \cup (-\mathcal{O}^+)= \mathcal{O}$ is conull, the admissibility condition for non-irreducible quasi-regular representations 
(see e.g.~Theorem 1 of \cite{FuMa}) allows to conclude for the restriction of $\WW_\psi f$ to $G_0$ that $\| \WW_\psi f\|_{\LL^2(G_0)}^2 = \frac{C_\psi}{2}
\| f \|^2$, and accordingly 
\[
f = \frac{2}{C_\psi} \int_{G_0} \WW_\psi f(x,h) \pi(x,h) \psi \dd(x,h)~.
\]
\end{proof}

\subsection{Vanishing moments and wavelet coefficient decay} 
\label{subsect:van_moment}

Our methods and techniques build on previous work on wavelet coefficient decay estimates from vanishing moment conditions. In this subsection, we shall roughly sketch the pertinent results from \cite{Fu_coorbit,Fu_atom,FuRe}. 

The first question that arises concerns the precise nature of the vanishing moment criteria. In higher dimensions, these depend on the dilation group, or more precisely, on the open dual orbit $\mathcal{O}$:
\begin{defn} \label{defn:van_mom}
 Let $r \in \mathbb{N}$ be given.
We say $\psi \in \mathcal{S}(\mathbb{R}^d)$ {\bf has vanishing moments in $\mathcal{O}^c$ of order $r$}  if all
derivatives $\partial^\alpha \widehat{\psi}$ of degree $|\alpha|< r$ are vanishing on $\mathcal{O}^c$.
\end{defn}
Clearly, any bandlimited wavelet, i.e., any Schwartz function $\psi$ with $\widehat{\psi} \in C_c^\infty (\mathcal{O})$ has vanishing moments of all orders. 
Note that the vanishing moment conditions are equivalent to 
\[
 \forall |\alpha| < r,\forall \xi \in \mathcal{O}^c ~:~\int_{\mathbb{R}^d} x^\alpha \psi(x) e^{-2 \pi i \langle \xi, x \rangle} \dd x = 0~. 
\]
We next define an auxiliary function $A: \mathcal{O} \to \mathbb{R}^+$ that is used in the decay estimates.  
Given any point $\xi \in \mathcal{O}$, let ${\rm
dist}(\xi,\mathcal{O}^c)$ denote the  distance of $\xi$ to the set
$\mathcal{O}^c$, and define
\begin{equation} \label{eqn:A_euclid}
 A(\xi) := \min \left( \frac{{\rm dist}(\xi,\mathcal{O}^c)}{1+\sqrt{|\xi|^2-{\rm dist}(\xi,\mathcal{O}^c)^2}},
 \frac{1}{1+|\xi|} \right)~. 
\end{equation}
By definition, $A$ is a continuous function with $A(\cdot) \le 1$.
If $\eta \in \mathcal{O}^c$ denotes an element of minimal distance to $\xi$, the fact that $\mathbb{R}^+ \cdot \eta  \subset \mathcal{O}^c$ then entails that $\eta$ and $\xi-\eta$ are orthogonal with respect to the standard scalar product on $\mathbb{R}^d$, and we obtain the more transparent expression
\begin{equation} \label{eqn:A_noneuclid}
  A(\xi) = \min_{\eta\in\mathcal{O}^c} \left( \frac{|\xi-\eta|}{1+|\eta|},
 \frac{1}{1+|\xi|} \right)~. 
\end{equation}
We define a further family of auxiliary functions $\Phi_\ell : H \to \mathbb{R}^+ \cup \{ \infty \}$, for $\ell \in \mathbb{N}$, via 
\begin{equation} \label{eqn:def_Phi_ell}
 \Phi_\ell(h) =  \int_{\mathbb{R}^d} A(\xi)^\ell A(h^T \xi)^\ell \dd\xi~.
\end{equation}
The motivation for the definitions of $A$ and $\Phi_\ell$ is best explained as follows:
Two functions $\eta, \psi \in \mathcal{S}(\mathbb{R})$ are admissible wavelets if and only if $\widehat{\eta} (0) = \widehat{\psi} (0) = 0$. The usual arguments relating smoothness of a function and the decay of its Fourier transform (and vice versa), together with the convolution theorem, allow us to estimate
\begin{equation}\label{eq:motiv}
 | \WW_\psi \eta (x,h)| = |\eta \ast (\pi(0,h) \psi^*) (x)| \le C (1+|x|)^{-N} |h|^{1/2}  \sum_{j=0}^{N} \left\| \partial^j\left(\widehat{\eta} \overline{\widehat{\psi}(h \cdot) }\right) \right\|_1~.
\end{equation}
Here the sum 
\[
 \sum_{j=0}^{N} \left\| \partial^j\left(\widehat{\eta} \overline{\widehat{\psi}(h \cdot)} \right) \right\|_1  
\] can be understood as a term measuring the overlap between $\widehat{\eta}$ and the dilate of $\widehat{\psi}$, and it goes to zero as $h \to 0, h \to \infty$, with a speed depending on the number of vanishing moments of $\eta$ and $\psi$. The analog of this overlap term in the higher-dimensional setting is essentially provided by the auxiliary function $\Phi_\ell$. 

The following result guarantees that the set of compactly supported Schwartz functions with vanishing moments of a given order is nonempty. It follows from Lemma 4.1 from \cite{Fu_coorbit}.

\begin{lem} 
For any $r>0$ there exists a nonzero compactly supported $\psi \in \mathcal{S}(\mathbb{R}^d)$ having vanishing moments in $\mathcal{O}^c$ of order $r$. 
\end{lem}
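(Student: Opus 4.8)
The plan is to work entirely on the Fourier side and reduce the construction to a purely polynomial problem. By Definition \ref{defn:van_mom}, I must produce a nonzero $\psi \in C_c^\infty(\mathbb{R}^d)$ whose Fourier transform satisfies $\partial^\alpha \widehat{\psi}|_{\mathcal{O}^c} = 0$ for all $|\alpha| < r$, i.e.\ $\widehat{\psi}$ vanishes to order $r$ along $\mathcal{O}^c$. The decisive observation is that multiplying $\widehat{g}$ by a polynomial corresponds, on the space side, to applying a constant-coefficient differential operator, and such operators preserve $C_c^\infty(\mathbb{R}^d)$. So I would fix any bump $g \in C_c^\infty(\mathbb{R}^d) \setminus \{0\}$ together with a nonzero polynomial $P$ satisfying $\mathcal{O}^c \subseteq \{P = 0\}$, and set $\widehat{\psi} := P^r \cdot \widehat{g}$, equivalently $\psi := (P^r)(\tfrac{1}{2\pi i}\partial)\,g$. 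Then $\psi \in C_c^\infty(\mathbb{R}^d)$ with ${\rm supp}(\psi) \subseteq {\rm supp}(g)$.

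Two routine verifications then finish the argument once such a $P$ is available. First, $\psi \neq 0$: since $g \neq 0$, $\widehat{g}$ is a nonzero real-analytic function, and $P^r$ is a nonzero polynomial, so their product cannot vanish on the open set $\mathcal{O}$ (the zero set of a nonzero real-analytic function has empty interior), whence $\widehat{\psi} \not\equiv 0$. Second, the vanishing moment condition holds by the Leibniz rule applied twice: for $|\beta| < r$, each term of $\partial^\beta(P^r)$ is a product of $r$ derivatives of $P$ whose orders sum to $|\beta| < r$, so by pigeonhole at least one factor is an underived $P$; hence $\partial^\beta(P^r)$ vanishes on $\{P=0\} \supseteq \mathcal{O}^c$ for all $|\beta| < r$. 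Expanding $\partial^\alpha \widehat{\psi} = \sum_{\gamma \le \alpha} \binom{\alpha}{\gamma} \partial^\gamma(P^r)\,\partial^{\alpha-\gamma}\widehat{g}$ for $|\alpha| < r$, every summand contains a factor $\partial^\gamma(P^r)$ with $|\gamma| < r$, so $\partial^\alpha\widehat{\psi}|_{\mathcal{O}^c} = 0$, which is precisely vanishing moments of order $r$ in $\mathcal{O}^c$.

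The one nontrivial ingredient, and the main obstacle, is the existence of a nonzero polynomial $P$ vanishing on $\mathcal{O}^c$, i.e.\ the fact that the complement of the open dual orbit lies inside a real algebraic hypersurface. This cannot be circumvented by a softer construction: since $\psi$ is compactly supported, $\widehat{\psi}$ is the restriction of an entire function, so the locus on which it may vanish to positive order is forced to lie in an analytic variety; a cone $\mathcal{O}^c$ not contained in such a variety would render the statement false. For the admissible dilation groups considered here, $\mathcal{O} = H^T \xi_0$ is an open orbit and its $H^T$-invariant complement $\mathcal{O}^c$ is the associated singular set, cut out by a relative invariant polynomial, which is the content of \cite[Lemma 4.1]{Fu_coorbit} that I would invoke for the general case. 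The recipe is transparent in the guiding examples: for a shearlet-type group with $\mathcal{O}^c = \{\xi_1 = 0\}$ one takes $P(\xi) = \xi_1$, giving $\psi = (2\pi i)^{-r}\partial_{x_1}^r g$; for the similitude group $\mathbb{R}^{+}\,\mathrm{SO}(d)$ with $\mathcal{O}^c = \{0\}$ one takes $P(\xi) = |\xi|^2$, giving $\psi = c\,\Delta^r g$ and recovering the classical vanishing-moment construction.
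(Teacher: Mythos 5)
Your construction is correct, but there is little in the paper to compare it against: the paper's entire ``proof'' is the sentence preceding the lemma, namely that the statement follows from Lemma 4.1 of \cite{Fu_coorbit}. What you have written is a sound self-contained reconstruction of how such a proof goes: setting $\widehat{\psi} = P^m \widehat{g}$, i.e.\ $\psi = (P^m)(\tfrac{1}{2\pi i}\partial)g$ for a bump $g$ (with $m = \lceil r \rceil$ if $r$ is not an integer, since the definition of vanishing moments only involves $|\alpha| < r$), your Leibniz--pigeonhole verification of the moment condition and the real-analyticity argument for $\psi \neq 0$ are both complete and correct. The one place where you lean on an assertion rather than an argument is the existence of a nonzero polynomial $P$ vanishing on $\mathcal{O}^c$, which you attribute to \cite[Lemma 4.1]{Fu_coorbit}; since the paper invokes that very lemma for the full existence statement, the paper gives you no way to confirm that the cited source isolates the polynomial fact in the form you want, and as written your appeal risks being circular. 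Fortunately the fact admits a short direct proof in this setting, which would make your argument fully independent: because $\mathcal{O}$ is conull, it is the \emph{unique} open orbit (any other open orbit would be a positive-measure subset of the null set $\mathcal{O}^c$), so for every $\xi \in \mathcal{O}^c$ the orbit $H^T\xi$ is not open, equivalently the infinitesimal orbit map $\mathfrak{h} \ni X \mapsto X^T\xi \in \mathbb{R}^d$ fails to be surjective. Fixing a basis $X_1,\dots,X_n$ of the Lie algebra $\mathfrak{h}$ of $H$, this means all $d\times d$ minors $m_j(\xi)$ of the $d \times n$ matrix $\left( X_1^T\xi \,\cdots\, X_n^T\xi \right)$ vanish; each $m_j$ is a homogeneous polynomial of degree $d$ in $\xi$, so $P(\xi) := \sum_j m_j(\xi)^2$ is a polynomial vanishing on $\mathcal{O}^c$ with $P(\xi_0) > 0$ for any $\xi_0 \in \mathcal{O}$, hence $P \neq 0$. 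With this supplied, your proof is complete; I would also note that your structural remark --- compact support of $\psi$ forces $\widehat{\psi}$ to be entire, so \emph{some} analytic thinness of $\mathcal{O}^c$ is genuinely needed and no softer construction can evade it --- correctly identifies why this polynomial input is the crux of the lemma, and your two worked examples (shearlet and similitude groups) instantiate the construction correctly.
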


We next quote the following general estimate \cite[Lemma 3.7]{Fu_atom}, which can be viewed as a generalization of \eqref{eq:motiv}:
\begin{lem} \label{lem:decay_est_wc}
 Let $0 < m < r$, and let $\eta, \psi \in \mathcal{S}(\mathbb{R}^d)$ denote functions with vanishing moments of order $r$ in $\mathcal{O}^c$. Then 
 there exists $C>0$ independent of $x,h,\eta,\psi$ such that
 \[
 \left|  \WW_\psi \eta  (x,h) \right| \leq C  |\widehat{\eta}|_{r} |\widehat{\psi}|_{r} (1+|x|)^{-m}|\det(h)|^{1/2} (1+\| h \|)^m \Phi_{r-m}(h)~. 
 \]
\end{lem}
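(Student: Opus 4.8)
The plan is to prove the estimate in Lemma \ref{lem:decay_est_wc} by first reducing the statement to a convolution inequality in the frequency domain, and then carefully estimating the resulting overlap integral using the vanishing moment hypothesis encoded in the function $A$. First I would rewrite the wavelet coefficient as a convolution. Using \eqref{eq:QuasiRegularOnFourierSide} and the convolution theorem, one has
\[
\WW_\psi \eta (x,h) = \langle \eta, \pi(x,h)\psi\rangle = \left(\eta \ast \pi(0,h)\psi^*\right)(x),
\]
so that the spatial decay in $(1+|x|)^{-m}$ will be produced by controlling $m$ derivatives of the Fourier transform of the convolution kernel, i.e.\ by bounding $\sum_{|\beta|\le m} \|\partial^\beta(\widehat{\eta}\cdot \overline{\widehat{\psi}(h^T\cdot)})\|_1$. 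This is exactly the higher-dimensional analog of the one-dimensional motivating inequality \eqref{eq:motiv}, and the factor $|\det(h)|^{1/2}$ comes out of the normalization in \eqref{eq:QuasiRegularOnFourierSide}.

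The next step is to control the $L^1$ norms of the derivatives $\partial^\beta(\widehat{\eta}\cdot \overline{\widehat{\psi}(h^T\cdot)})$ by the auxiliary functions $A$ and $\Phi_{r-m}$. The key analytic input is a pointwise estimate of the form
\[
\left|\partial^\alpha \widehat{\psi}(\xi)\right| \le C\,|\widehat{\psi}|_r\, A(\xi)^{\,r-|\alpha|}(1+|\xi|)^{-K}
\]
for $|\alpha| < r$ and any prescribed decay order $K$, which quantifies how the vanishing moments of order $r$ in $\mathcal{O}^c$ force $\widehat{\psi}$ and its low-order derivatives to vanish to order $r-|\alpha|$ as $\xi$ approaches $\mathcal{O}^c$; the relevant rate of vanishing is measured precisely by the distance-based quantity $A(\xi)$ from \eqref{eqn:A_euclid}--\eqref{eqn:A_noneuclid}. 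Applying the product (Leibniz) rule to $\partial^\beta(\widehat{\eta}\cdot \overline{\widehat{\psi}(h^T\cdot)})$ and tracking the chain-rule factors of $h^T$ against $(1+\|h\|)$, each term is bounded by a product $A(\xi)^{r-m}A(h^T\xi)^{r-m}$ times rapidly decaying factors, and integrating in $\xi$ yields exactly $\Phi_{r-m}(h)$ after absorbing the $(1+\|h\|)^m$ factor. Since this is the content of \cite[Lemma 3.7]{Fu_atom}, I would quote the pointwise vanishing-moment estimate as the crucial lemma and then assemble the pieces.

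The main obstacle is establishing the sharp pointwise bound on $\partial^\alpha\widehat{\psi}$ near the boundary $\mathcal{O}^c$, since the naive Taylor expansion of $\widehat{\psi}$ about a nearest boundary point $\eta\in\mathcal{O}^c$ must be organized so that the vanishing of \emph{all} derivatives of degree $<r$ on $\mathcal{O}^c$ (not merely at a single point) is exploited, and so that the resulting estimate is uniform in both $\xi$ and the dilation $h$. The geometric fact, noted just before \eqref{eqn:A_noneuclid}, that $\mathbb{R}^+\cdot\eta\subset\mathcal{O}^c$ forces $\eta\perp(\xi-\eta)$ is what makes the distance function $A$ behave well under dilation and lets the two factors $A(\xi)^{r-m}$ and $A(h^T\xi)^{r-m}$ decouple cleanly; handling this orthogonality and the interplay with the operator norm $\|h\|$ is the delicate point. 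Once that pointwise estimate is in hand, the remaining steps—Leibniz expansion, integration, and collecting the $(1+|x|)^{-m}$ and $|\det(h)|^{1/2}$ factors—are routine, and I would not grind through them in detail, instead citing \cite{Fu_atom} for the full argument.
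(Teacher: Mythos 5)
Your proposal is correct and coincides with the paper's treatment: the paper does not prove this lemma but simply quotes it as \cite[Lemma 3.7]{Fu_atom}, and your sketch (the convolution identity $\WW_\psi\eta(x,h)=(\eta\ast\pi(0,h)\psi^*)(x)$, the Fourier-side $\LL^1$ bounds on $\partial^\beta\bigl(\widehat{\eta}\,\overline{\widehat{\psi}(h^T\cdot)}\bigr)$, the Taylor expansion at a nearest point of $\mathcal{O}^c$ exploiting the orthogonality of $\eta$ and $\xi-\eta$ to get $|\partial^\alpha\widehat{\psi}(\xi)|\lesssim|\widehat{\psi}|_r A(\xi)^{r-|\alpha|}$, then Leibniz, chain rule, and integration to produce $(1+\|h\|)^m\Phi_{r-m}(h)$) is precisely the argument behind that citation. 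One minor overstatement: with only $|\widehat{\psi}|_r$ on the right-hand side the pointwise bound cannot carry an arbitrary extra decay order $K$ (that would require Schwartz norms of order higher than $r$); but $K=0$ is all your assembly needs, since the integrability required at the last step is already encoded in $A(\xi)^{r-m}\le(1+|\xi|)^{-(r-m)}$ and the lemma permits $\Phi_{r-m}(h)=\infty$.
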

Next, we use the above lemma to obtain estimates on $W_\psi\eta$.  Recall that ${\mathcal O}$ is an open orbit in $\widehat{\RR^d}$, with the property that the stabilizer groups associated with the $H$-action on  ${\mathcal O}$ are compact. This property allows us to transfer the Lebesgue measure on $\widehat{\RR^d}$ to the Haar measure on $H$, i.e. for fixed $\xi_0\in\OO$ and every $F\in C_c({\mathcal O})$ we have
\[\int_{\mathcal O} F(\xi) \dd \xi=\int_H F(h^T\xi_0)\frac{|\det(h)|}{\Delta_H(h)}\dd h.\]
Fix an element $\xi_0\in\OO$, and define $A_H:H\rightarrow [0,1]$ to be $A_H(h)=A(h^T\xi_0)$.
Our computations in the following proposition are inspired by techniques developed in \cite{FuRe}. 
 
 \begin{prop}\label{prop:decay-cross-kernel}
 Assume that there exists a  positive number $\ell_1\in\RR$ and a constant $C$ so that for every $h\in H$,
 \begin{eqnarray}\label{assump}
 \|h^{\pm1}\|A_H(h)^{\ell_1}&\leq& C.
 \end{eqnarray}
Let $\beta_1, \beta_2,\beta_3$ be positive numbers with $\beta_1\in \NN$, and consider functions $\psi_1,\psi_2\in {\mathcal S}(\mathbb{R}^d)$ with vanishing moments of order $r$  in $\mathcal{O}^c$.
If $r>d/2+\ell_1({\beta_1+\beta_2}+\beta_3)+\beta_1$ then for every $(x,h)\in G$, 
 \[
 \left|  \WW_{\psi_1} \psi_2  (x,h) \right| \leq D(1+|x|)^{-\beta_1} (1+\|h\|)^{-\beta_2}(1+\|h^{-1}\|)^{-\beta_3},
 \]
 where the constant $D$ depends on $\psi_1,\psi_2$,  $\beta_1,\beta_2,\beta_3$, and $r,\ell_1$, and does not depend on $x,h$.
 \end{prop}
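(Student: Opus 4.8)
The plan is to combine the pointwise cross-kernel estimate from Lemma~\ref{lem:decay_est_wc} with the growth control on $\|h^{\pm 1}\|$ provided by the hypothesis \eqref{assump}, using the latter to convert powers of $\|h\|$ and $\|h^{-1}\|$ into powers of $A_H(h)$, and then absorbing everything into the integral defining $\Phi_\ell$. First I would apply Lemma~\ref{lem:decay_est_wc} with a parameter $m$ to be chosen, giving
\[
\left|\WW_{\psi_1}\psi_2(x,h)\right| \le C\,|\widehat{\psi_1}|_r |\widehat{\psi_2}|_r\,(1+|x|)^{-m}\,|\det(h)|^{1/2}(1+\|h\|)^m \,\Phi_{r-m}(h)~.
\]
To match the desired spatial decay $(1+|x|)^{-\beta_1}$, I would set $m=\beta_1$ (which is where the hypothesis $\beta_1\in\NN$ and $m<r$ are used). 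The remaining task is to show that the $h$-dependent factor $|\det(h)|^{1/2}(1+\|h\|)^{\beta_1}\Phi_{r-\beta_1}(h)$ is dominated by $(1+\|h\|)^{-\beta_2}(1+\|h^{-1}\|)^{-\beta_3}$ up to a constant.

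The key computation is the estimate of $\Phi_{r-\beta_1}(h)=\int_{\mathbb{R}^d}A(\xi)^{r-\beta_1}A(h^T\xi)^{r-\beta_1}\dd\xi$. I would transfer this integral to the group via the orbit-to-Haar measure formula quoted just before the proposition, writing $\xi=g^T\xi_0$, so that $A(\xi)=A_H(g)$ and $A(h^T\xi)=A(h^Tg^T\xi_0)=A((gh)^T\xi_0)=A_H(gh)$, yielding an expression of the form $\int_H A_H(g)^{r-\beta_1}A_H(gh)^{r-\beta_1}\tfrac{|\det g|}{\Delta_H(g)}\dd g$ after accounting for the Jacobian; the determinant factor arising here needs to be tracked carefully and combined with the explicit $|\det(h)|^{1/2}$ prefactor. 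The hypothesis \eqref{assump}, namely $\|h^{\pm 1}\|A_H(h)^{\ell_1}\le C$, is the engine: it lets me bound $(1+\|h\|)^{\beta_2}$ and $(1+\|h^{-1}\|)^{\beta_3}$ by constants times $A_H(h)^{-\ell_1\beta_2}$ and $A_H(h)^{-\ell_1\beta_3}$ respectively, and similarly $(1+\|h\|)^{\beta_1}$ by $A_H(h)^{-\ell_1\beta_1}$ and $|\det(h)|^{1/2}$ by a power of $A_H(h)^{-1}$ (using that $|\det h|\le \|h\|^d$, hence controlled by $A_H(h)^{-d\ell_1}$, contributing $A_H(h)^{-d\ell_1/2}$). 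Collecting exponents, I need the $A_H$-power budget to close: the integrand after all substitutions should retain enough net positive power of $A_H$ at both the point $h$ and under the integral to guarantee finiteness and the claimed decay, and the threshold $r>d/2+\ell_1(\beta_1+\beta_2+\beta_3)+\beta_1$ is precisely what makes the bookkeeping work.

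The main obstacle I anticipate is the careful accounting of the $A_H$-exponents together with the modular/determinant Jacobian factors when moving between the Euclidean integral $\Phi_\ell$ and its group-side representation, and ensuring that the exponent of $A_H(\cdot)^{r-\beta_1}$ inside the integral remains large enough (after spending $\ell_1(\beta_1+\beta_2+\beta_3)+d\ell_1/2$ worth of it to generate the desired $\|h\|,\|h^{-1}\|,\det$ factors) to keep the integral over $H$ finite. Concretely, I would want to peel off from the two factors $A_H(g)^{r-\beta_1}$ and $A_H(gh)^{r-\beta_1}$ the powers of $A_H(h)$ needed to produce the target decay in $h$, using a submultiplicativity-type relation between $A_H(g)$, $A_H(gh)$, and $A_H(h)$ (analogous to the way $A$ behaves under the dual action), and then verify that the leftover exponent exceeds the critical value $d/(2\ell_1)$ guaranteeing $\int_H A_H(g)^s\,\dd g<\infty$. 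Tracking this threshold is exactly where the stated lower bound on $r$ enters, and getting every constant and exponent to line up is the delicate part; the rest is routine estimation using Lemma~\ref{lem:decay_est_wc} and the hypothesis \eqref{assump}.
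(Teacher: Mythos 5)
Your opening reduction coincides with the paper's: apply Lemma~\ref{lem:decay_est_wc} with $m=\beta_1$, so that everything comes down to showing that $h\mapsto m(h)\Phi_{r-\beta_1}(h)$ is bounded, where $m(h)=|\det(h)|^{1/2}(1+\|h\|)^{\beta_1+\beta_2}(1+\|h^{-1}\|)^{\beta_3}$; you also correctly transfer $\Phi_\ell$ to the group via the orbit measure with Jacobian $|\det(k)|/\Delta_H(k)$. The gap lies in the mechanism that is supposed to generate the decay in $h$. You apply \eqref{assump} \emph{at the point $h$} to trade $m(h)$ for $A_H(h)^{-\ell_1(\beta_1+\beta_2+\beta_3)-d\ell_1/2}$, which reduces your task to the pointwise bound $\Phi_{r-\beta_1}(h)\lesssim A_H(h)^{P}$. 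This intermediate statement is strictly stronger than the proposition and does not follow from the hypotheses; indeed it is false in general. Condition \eqref{assump} is one-sided (it bounds $A_H(h)$ from \emph{above} by inverse powers of $\|h^{\pm1}\|$), so every substitution it licenses produces \emph{negative} powers of $A_H(h)$; the geometric relation coming from $H^T$-invariance of $\mathcal{O}^c$, namely $A(h^T\xi)\ge A(\xi)/\bigl(\|h^{-1}\|(1+\|h\|)\bigr)$, likewise goes the wrong way for an upper bound on the integrand; and nothing prevents the single point value $A_H(h)=A(h^T\xi_0)$ from being far smaller than any inverse power of the norms (say of size $e^{-\|h\|}$, perfectly consistent with \eqref{assump}), while the integrated overlap $\Phi_\ell(h)$ decays only polynomially in $\|h^{\pm 1}\|$. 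So the ``submultiplicativity-type relation between $A_H(g)$, $A_H(gh)$ and $A_H(h)$'' that your peeling needs is simply not available. Your bookkeeping also fails to match the stated threshold: you spend an extra $d\ell_1/2$ of the exponent budget on $|\det(h)|^{1/2}$, and your claimed critical value $d/(2\ell_1)$ for $\int_H A_H(g)^s\,\dd g$ ignores the very Jacobian you flag --- on the orbit side the correct threshold is exponent $>d$ for $\int_{\mathcal{O}}A(\xi)^s\,\dd\xi$, and converting $\dd g$ into the orbit measure costs additional norm powers.

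The paper's proof avoids any pointwise use of \eqref{assump} at $h$. It exploits that $m$ is submultiplicative, $m(h)\le m(k)\,m(k^{-1}h)$, to dominate $m(h)\Phi_\ell(h)$ (after the orbit transfer and the substitution $k\mapsto k^{-1}$) by a convolution $(B_1\ast B_2)(h)$ over $H$, with $B_1(k)=|\det(k)|^{-1}m(k)A_H(k^{-1})^\ell$ and $B_2(k)=m(k)A_H(k)^\ell$, and then bounds $\|B_1\ast B_2\|_\infty\le\|B_1\|_2\,\|\widetilde{B_2}\|_2$ by Cauchy--Schwarz. Here \eqref{assump} is applied only at the integration variable (at $k^{-1}$), where it exactly neutralizes the growth of $m(k)$ against the factor $A_H(k^{-1})^\ell$; the determinant and modular factors cancel completely against the Haar/orbit Jacobians, so no vanishing-moment budget is spent on them, and both $\LL^2$-norms reduce to $\int_{\mathcal{O}}A(\xi)^{2\ell-2\ell_1(\beta_1+\beta_2+\beta_3)}\,\dd\xi$, finite precisely when $r-\beta_1>d/2+\ell_1(\beta_1+\beta_2+\beta_3)$, i.e.\ under the stated bound on $r$. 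The idea you are missing is that the decay in $h$ is produced \emph{integrally}, through this convolution estimate over the group, not pointwise from smallness of $A_H(h)$.
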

 \begin{proof}
Without loss of generality, we can assume that $C\geq 1$.
 By Lemma \ref{lem:decay_est_wc}, we have 
 \[
 \left|  \WW_{\psi_1} \psi_2  (x,h) \right| \leq C_1(1+|x|)^{-\beta_1} |\det(h)|^{1/2} (1+\| h \|)^{\beta_1} \Phi_{r-\beta_1}(h).
 \]
 Hence it remains to show that the map $h\mapsto  |\det(h)|^{1/2} (1+\| h \|)^{\beta_1+\beta_2} (1+\| h^{-1} \|)^{\beta_3}\Phi_{r-\beta_1}(h)$ is bounded.
To do so, define $m(h)= |\det(h)|^{1/2} (1+\| h \|)^{\beta_1+\beta_2} (1+\| h^{-1} \|)^{\beta_3}$, and observe that $m$ is submultiplicative, as $\beta_1,\beta_2, \beta_3>0$. 
Our goal is then to prove that $m\Phi_{r-\beta_1}$ belongs to $L^\infty(H)$.

For $\ell\in\NN$, using the submultiplicativity of $m$, we have
\begin{eqnarray*}
m(h)\Phi_\ell(h)&=&m(h)\int_{\mathbb{R}^d} A(\xi)^\ell A(h^T \xi)^\ell \dd\xi\\
&=&m(h)\int_{H} A(k^T\xi_0)^\ell A(h^Tk^T \xi_0)^\ell \frac{|\det(k)|}{\Delta_H(k)} \dd k\\
&=&m(h)\int_{H} A_H(k)^\ell A_H(kh)^\ell \frac{|\det(k)|}{\Delta_H(k)} \dd k\\
&=&m(h)\int_{H} |\det(k)|^{-1} {A_H}(k^{-1})^\ell A_H(k^{-1}h)^\ell \dd k,\\
&\leq&\int_{H} \left(|\det(k)|^{-1} m(k){A_H}(k^{-1})^\ell\right)\left(m(k^{-1}h)A_H(k^{-1}h)^\ell\right) \dd k\\
&=&(B_1*B_2)(h),
\end{eqnarray*}
where we use the auxiliary functions 
$B_1, B_2:H\rightarrow [0,\infty) $ defined as $B_1(k)=|\det(k)|^{-1} m(k){A_H}(k^{-1})^\ell$ and $B_2(k)=m(k)A_H(k)^\ell$. Now, it is only left to show that if $\ell>\ell_1({\beta_1+\beta_2}+\beta_3)+\frac{d}{2}$ then $B_1, \widetilde{B_2}\in \LL^2(H)$, where we use the  notation $\widetilde{B_2}(k)=B_2(k^{-1})$ whenever $k\in H$. Indeed, if that is the case, we will have $\|m\Phi_\ell\|_\infty\leq \|B_1\|_2\|\widetilde{B_2}\|_2<\infty$. 

Note that $(1+\|h^{\pm1}\|)A_H(h)^{\ell_1}= A_H(h)^{\ell_1}+\|h^{\pm1}\|A_H(h)^{\ell_1}\leq 2C $. If $\ell>d/2+\ell_1({\beta_1+\beta_2}+\beta_3)$, we have
\begin{eqnarray*}
\|B_1\|_2^2&=&\int_H |\det(k)|^{-2} m(k)^2{A_H}(k^{-1})^{2\ell} \dd k\\
&=&\int_H |\det(k)|^{-2}  |\det(k)| (1+\| k \|)^{2\beta_1+2\beta_2} (1+\| k^{-1} \|)^{2\beta_3}{A_H}(k^{-1})^{2\ell} \dd k\\
&\leq&(2C)^{2(\beta_1+\beta_2+\beta_3)}\int_H |\det(k)|^{-1} {A_H}(k^{-1})^{2\ell-2\ell_1({\beta_1+\beta_2}+\beta_3) } \dd k\\
&=&(2C)^{2(\beta_1+\beta_2+\beta_3)}\int_H  {A_H}(k)^{2\ell-2\ell_1({\beta_1+\beta_2}+\beta_3) }\frac{|\det(k)|}{\Delta_H(k)} \dd k\\
&=&(2C)^{2(\beta_1+\beta_2+\beta_3)}\int_{\OO} {A}(\xi)^{2\ell-2\ell_1({\beta_1+\beta_2}+\beta_3) }\dd \xi\\
&\leq&(2C)^{2(\beta_1+\beta_2+\beta_3)}\int_{\OO} \left(\frac{1}{1+|\xi|}\right)^{2\ell-2\ell_1({\beta_1+\beta_2}+\beta_3) }\dd \xi,\\
\end{eqnarray*}
which is a bounded integral provided that $2\ell-2\ell_1({\beta_1+\beta_2}+\beta_3)>d$.

Also,
\begin{eqnarray*}
\|\widetilde{B_2}\|_2^2&=&\int_H m(k^{-1})^2A_H(k^{-1})^{2\ell} \dd k\\
&=&\int_H |\det(k)| (1+\| k \|)^{2(\beta_1+\beta_2)} (1+\| k^{-1} \|)^{2\beta_3}A_H(k)^{2\ell}\frac{1}{\Delta_H(k)} \dd k\\
&\leq&(2C)^{2(\beta_1+\beta_2+\beta_3)}\int_H{A_H}(k)^{2\ell-2\ell_1({\beta_1+\beta_2}+\beta_3) }\frac{ |\det(k)|}{\Delta_H(k)} \dd k\\
&=&(2C)^{2(\beta_1+\beta_2+\beta_3)}\int_{\OO} {A}(\xi)^{2\ell-2\ell_1({\beta_1+\beta_2}+\beta_3) }\dd \xi\\
&\leq&(2C)^{2(\beta_1+\beta_2+\beta_3)}\int_{\OO} \left(\frac{1}{1+|\xi|}\right)^{2\ell-2\ell_1({\beta_1+\beta_2}+\beta_3) }\dd \xi,\\
\end{eqnarray*}
which is a bounded integral provided that $2\ell-2\ell_1({\beta_1+\beta_2}+\beta_3)>d$. Finally, letting $D=C_1\|m\Phi_{r-\beta_1}\|_\infty$ finishes the proof. 
 \end{proof}

The estimates in the previous proposition motivate the introduction of a further family of auxiliary functions. 
 \begin{lem}\label{lem:theta-decays}
 For $s,t\in \RR^+$ and $p\in \RR$, let $\Theta_{s,t}:H\rightarrow (0,\infty)$ be defined as 
 $$\Theta_{s,t}(h)=(1+\|h\|)^{-s}(1+\|h^{-1}\|)^{-t}.$$
If $s\geq \dim H+d+1$ and $t\geq \dim H +d+dp$ then $|\det(\cdot)|^{-p}\Theta_{s,t}\in \LL^1(H)$.
 \end{lem}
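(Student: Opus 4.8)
The plan is to reduce the integrability of $|\det(\cdot)|^{-p}\Theta_{s,t}$ against the left Haar measure $\dd h$ to a tame geometric estimate for $H$ viewed as an embedded submanifold of the matrix space $M_d(\RR)\cong\RR^{d^2}$. First I would dispose of the determinant factor using the singular-value bounds $\|h^{-1}\|^{-d}\le|\det(h)|\le\|h\|^{d}$, which follow from $|\det(h)|=\prod_i\sigma_i(h)$ together with $\sigma_{\max}(h)=\|h\|$ and $\sigma_{\min}(h)=\|h^{-1}\|^{-1}$. For $p\ge 0$ (the case driving the hypotheses) this gives $|\det(h)|^{-p}\le\|h^{-1}\|^{dp}\le(1+\|h^{-1}\|)^{dp}$, so that $|\det(h)|^{-p}\Theta_{s,t}(h)\le(1+\|h\|)^{-s}(1+\|h^{-1}\|)^{-(t-dp)}$; this is exactly where the shift by $dp$ in the hypothesis $t\ge\dim H+d+dp$ is consumed. (For $p<0$ one instead absorbs $|\det(h)|^{-p}\le(1+\|h\|)^{-dp}$ into the $\|h\|$-factor, with the roles of the two exponents interchanged.) It therefore suffices to prove $\int_H(1+\|h\|)^{-a}(1+\|h^{-1}\|)^{-b}\,\dd h<\infty$ whenever $a\ge\dim H+d+1$ and $b\ge\dim H+d$.

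Next I would compare the left Haar measure with the Euclidean surface measure $\sigma$ that $H$ inherits from $\RR^{d^2}$. Fixing a basis $X_1,\dots,X_n$ of the Lie algebra $\mathfrak{h}$, with $n=\dim H$, the tangent space at $h$ is $T_hH=h\mathfrak{h}$, so the surface-area density relative to the left-invariant Haar density is $V(h)=\mathrm{vol}_n(hX_1,\dots,hX_n)$. Since $\sigma_{\min}(h)\,|X|\le|hX|\le\sigma_{\max}(h)\,|X|$ for every $X$, this density satisfies $\|h^{-1}\|^{-n}\lesssim V(h)\lesssim\|h\|^{n}$, and in particular $\dd h=V(h)^{-1}\dd\sigma(h)\le C\,\|h^{-1}\|^{n}\dd\sigma(h)$. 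Feeding this into the integral turns the target into $\int_H(1+\|h\|)^{-a}(1+\|h^{-1}\|)^{-(b-n)}\dd\sigma(h)$, i.e. a surface integral in which the $\|h^{-1}\|$-exponent has dropped by $n=\dim H$; the hypotheses leave $a\ge\dim H+d+1$ and $b-\dim H\ge d$, with room to spare.

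Finally I would estimate this surface integral by a dyadic decomposition. Writing $S_j=\{h\in H:2^{j}\le\|h\|<2^{j+1}\}$, I bound the integrand on $S_j$ with $j\ge 0$ by $2^{-ja}$ and the measure by $\sigma(S_j)\le\sigma(H\cap\{\|h\|<2^{j+1}\})$; on $S_j$ with $j<0$ the relation $\|h^{-1}\|\ge\|h\|^{-1}$ forces $\|h^{-1}\|$ to be large, so the $\|h^{-1}\|$-factor supplies decay $\lesssim 2^{j(b-n)}$. Both tails then reduce to geometric series that converge precisely because the relevant exponents exceed $\dim H$. The one genuinely nontrivial ingredient — and the main obstacle — is the polynomial surface-growth bound $\sigma(H\cap\{\|h\|\le\rho\})\le C\rho^{\dim H}$. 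The intuition is that, since $T_hH=h\mathfrak{h}$ and $|hX|\le\|h\|\,|X|$, the Euclidean area element of $H$ grows no faster than $\|h\|^{\dim H}$; the subtlety to handle carefully is that the left Haar volume of a norm-ball $\{\|h\|\le\rho\}$ may be \emph{infinite} (for instance, for diagonal groups the nearly-singular directions are non-integrable), so the estimate must be run directly against $\sigma$ rather than against $\dd h$. Establishing this area bound for an arbitrary closed — hence embedded, but possibly non-algebraic — subgroup is the crux; it rests on the homogeneity of $H$ under left translations together with the tangent-space description above.
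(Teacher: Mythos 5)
Your proposal contains a genuine gap at exactly the point you flag as ``the crux'': the polynomial surface-growth bound $\sigma\left(H\cap\{\|h\|\le\rho\}\right)\le C\rho^{\dim H}$ is never proved, and it is not a standard fact for an arbitrary closed subgroup of ${\rm GL}(d,\mathbb{R})$. The gesture toward ``homogeneity of $H$ under left translations'' does not obviously close it: left translation by $g$ distorts the ambient Euclidean area element anisotropically (by factors anywhere between $\sigma_{\min}(g)^{\dim H}$ and $\sigma_{\max}(g)^{\dim H}$) and does not map norm-balls to norm-balls, so no covering or transitivity argument comes for free. You correctly observe that one cannot simply write $\sigma(H\cap B_\rho)=\int_{\|h\|\le\rho}V(h)\,{\rm d}h\le C\rho^{\dim H}\cdot{\rm Haar}(\{\|h\|\le\rho\})$ because the Haar measure of the norm-ball may be infinite near the singular boundary (where $\|h^{-1}\|\to\infty$ with $\|h\|$ bounded) --- but that observation is precisely what makes the needed estimate delicate: one must show that the frame-volume density $V(h)$ decays fast enough along those noncompact directions, and a priori $V(h)$ is only pinched between $\|h^{-1}\|^{-\dim H}$ and $\|h\|^{\dim H}$. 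For algebraic groups one could invoke degree-based volume bounds (Crofton-type estimates), but closed subgroups need not be algebraic, nor even definable in an o-minimal structure (spiral groups such as $\{e^tR_{\omega t}\}$ already fail o-minimality), so the lemma your argument rests on would itself require a substantive proof. A further warning sign: modulo the crux, your dyadic bookkeeping only consumes $s>\dim H$ and (roughly) $t>\dim H+dp$, visibly weaker than the stated hypotheses --- if your route worked, it would prove a strictly stronger statement for arbitrary closed groups, which should prompt suspicion rather than confidence. (A smaller issue: your absorption of $|\det(h)|^{-p}$ for $p<0$ into the $(1+\|h\|)^{-s}$ factor only leaves the needed slack when $d|p|<d+1$; this matters little in practice since the paper applies the lemma with $p=\frac12,\frac32$.)

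For contrast, the paper's proof never touches the ambient geometry of $H\subset\mathbb{R}^{d^2}$ and instead exploits the standing assumption that $H$ is irreducibly admissible. It first uses Hadamard's inequality $|\det(h)|\le(1+\|h\|)^d$ together with the modular-function bound $\Delta_H(h)\le\left((1+\|h\|)(1+\|h^{-1}\|)\right)^{\dim H}$ to show $|\det(h)|^{-p}\Theta_{s,t}(h)\frac{\Delta_H(h)}{|\det(h)|}\le\Theta_{s',t'}(h)$ with $s'=s-\dim H$, $t'=t-\dim H-d-dp$; it then averages $\Theta_{s',t'}$ over the compact stabilizer $H_{\xi_0}$ (getting two-sided comparability $a_{s',t'}\Theta_{s',t'}\le\widetilde{\Theta}_{s',t'}\le b_{s',t'}\Theta_{s',t'}$) to obtain a well-defined function $F_{s',t'}$ on the open orbit $\mathcal{O}$, and transfers the integral via $\int_{\mathcal{O}}F(\xi)\,{\rm d}\xi=\int_H F(h^T\xi_0)\frac{|\det(h)|}{\Delta_H(h)}\,{\rm d}h$. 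Convergence then reduces to $\int_{\mathcal{O}}\left(1+\frac{|\xi|}{|\xi_0|}\right)^{-s'}{\rm d}\xi<\infty$, an integral over a subset of $\mathbb{R}^d$, which is where the hypotheses $s'\ge d+1$ and $t'\ge 0$ --- i.e.\ the ``$+d+1$'' and ``$+d+dp$'' shifts you found puzzling slack in --- are exactly consumed. To repair your argument you would either need to supply a genuine proof of the surface-growth lemma, or restrict to the paper's setting and use the orbit-transfer mechanism, at which point you are essentially reproducing the paper's proof.
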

 \begin{proof}
We first recall the inequalities $|\det(h)|\leq (1+\|h\|)^d$ and $\Delta_H(h)\leq ((1+\|h\|) (1+\|h^{-1}\|))^{\dim H}$. The first one is known as Hadamard's inequality; for the second one we refer to the proof of Lemma 2.8 of \cite{FuRe}.

These inequalities allow to obtain the following useful estimate:
\begin{eqnarray}
|\det(h)|^{-p}\Theta_{s,t}(h)\frac{\Delta_H(h)}{|\det(h)|}&=&|\det(h^{-1})|^{p+1}\Theta_{s,t}(h)\Delta_H(h)\nonumber\\
&\leq&(1+\|h^{-1}\|)^{d(p+1)}(1+\|h\|)^{-s}(1+\|h^{-1}\|)^{-t}{((1+\|h\|) (1+\|h^{-1}\|))^{\dim H}}\nonumber\\
&=&\Theta_{s-\dim H,t-\dim H-d-dp}(h)~.\label{eq:new1}
\end{eqnarray}
Fix an arbitrary $\xi_0\in \OO$, and define $\widetilde{\Theta}_{s,t}: H\rightarrow (0,\infty)$ as $\widetilde{\Theta}_{s,t}(h)=\int_{H_{\xi_0}}\Theta_{s,t}(kh)\dd_{H_{\xi_0}}k$, where $H_{\xi_0}$ is the stabilizer subgroup at $\xi_0$ with Haar measure $\dd_{H_{\xi_0}}k$. Note that the integral defining $\widetilde{\Theta}_{s,t}$ is bounded, as $H_{\xi_0}$ is compact. Next, we observe that $\widetilde{\Theta}_{s,t}$ can be bounded from above and below by multiples of  ${\Theta}_{s,t}$, i.e., there exist positive constants $a_{s,t}, b_{s,t}$ such that 
$$a_{s,t}\Theta_{s,t}\leq \widetilde{\Theta}_{s,t}\leq b_{s,t}\Theta_{s,t}.$$
Indeed, the inequality $\| g h \| \le \| g \| ~\|h \|$ entails $\Theta_{s,t}(gh) \ge \Theta_{s,t}(g) \Theta_{s,t}(h)$. Hence we get
 \begin{eqnarray*}
\widetilde{\Theta}_{s,t}(h)=\int_{H_{\xi_0}}\Theta_{s,t}(kh)\dd_{H_{\xi_0}}k\geq\Theta_{s,t}(h)\int_{H_{\xi_0}}\Theta_{s,t}(k)\dd_{H_{\xi_0}}k=a_{s,t}\Theta_{s,t}(h),
 \end{eqnarray*}
where $a_{s,t}:=\int_{H_{\xi_0}}\Theta_{s,t}(k)\dd_{H_{\xi_0}}k$. On the other hand, the inequality $\|h\|\leq \|kh\|\|k^{-1}\|$ implies that $(1+\|kh\|)^{-1}\leq (1+\|h\|)^{-1}(1+\|k^{-1}\|)$, which in turn implies that 
\begin{eqnarray*}
\int_{H_{\xi_0}}\Theta_{s,t}(kh)\dd_{H_{\xi_0}}k\leq \Theta_{s,t}(h)\int_{H_{\xi_0}}(1+\|k^{-1}\|)^s(1+\|k\|)^t\dd_{H_{\xi_0}}k=b_{s,t}\Theta_{s,t}(h),
\end{eqnarray*}
where $b_{s,t}:=\int_{H_{\xi_0}}(1+\|k^{-1}\|)^s(1+\|k\|)^t\dd_{H_{\xi_0}}k$. Note that both $a_{s,t},b_{s,t}$ are finite, as  integrals of continuous functions on a compact group, and that $a_{s,t}>0$.

The function $\widetilde{\Theta}_{s,t}$ can be regarded as a well-defined function on $\OO$, which we denote by $F_{s,t}$, in the following manner:
 $$F_{s,t}:\OO\rightarrow (0,\infty), \ F_{s,t}(h^T\xi_0)=\widetilde{\Theta}_{s,t}(h),$$
since $\widetilde{\Theta}_{s,t}(lh)=\widetilde{\Theta}_{s,t}(h)$ for every $h\in H$ and $l\in H_{\xi_0}$. 
 We will use $F_{s,t}$ to relate the 1-norm of $\Theta_{s,t}$ to an integral on the orbit $\OO$. First observe that for $\xi=h^{T}\xi_0$, we clearly have $\frac{|\xi|}{|\xi_0|}\leq \|h\|$, which implies that
 $(1+\|h\|)^{-1}\leq (1+\frac{|\xi|}{|\xi_0|})^{-1}$. 
Hence, as $t,s\geq 0$ we have
 $$F_{s,t}(\xi)=F_{s,t}(h^T\xi_0)=\widetilde{\Theta}_{s,t}(h)\leq b_{s,t}(1+\|h\|)^{-s}(1+\|h^{-1}\|)^{-t}\leq b_{s,t}\left(1+\frac{|\xi|}{|\xi_0|}\right)^{-s}~.$$
 
Letting $s'=s-\dim H$ and $t'=t-d-dp-\dim H$, from the inequality \eqref{eq:new1} we get
\begin{eqnarray*}
\int_{H}|\det(h)|^{-p}\Theta_{s,t}(h)\dd h&\leq&\int_{H}\Theta_{s',t'}(h)\frac{|\det(h)|}{\Delta_H(h)}\dd h\\
&\leq&\frac{1}{a_{s',t'}}\int_{\OO}F_{s',t'}(\xi)\dd \xi\\
&\leq&\frac{b_{s',t'}}{a_{s',t'}}\int_{\OO}\left(1+\frac{|\xi|}{|\xi_0|}\right)^{-s'}\dd\xi,
\end{eqnarray*}
which is a bounded integral if $s'\geq d+1$ and $t'\geq 0$.
 \end{proof}
 As a first application of these estimates, we can formulate explicit criteria for the formula $W_\eta u = W_\psi u \ast W_\eta \psi$ to hold.  
 \begin{prop}\label{prop:5.3}
  Assume that there exist positive numbers $\ell_1,C\in\RR^+$ so that for every $h\in H$,
 \begin{eqnarray}\label{assump_1}
 \|h^{\pm1}\|A_H(h)^{\ell_1}&\leq& C~.
 \end{eqnarray}
 Let $u \in \mathcal{S}'(\mathbb{R}^d)$ denote a tempered distribution of order $N(u)$.
 Let $\eta, \psi \in \mathcal{S}(\mathbb{R}^d)$ denote functions with vanishing moments of order $r$ in $\mathcal{O}^c$, where 
 \[
 r >  2d + N(u) + \ell_1(5 + 2 \dim(H) + 6d + 3N(u))+2~.
 \] 
 Then
 \[
 W_\eta u =  \frac{1}{C_\psi} W_\psi u \ast W_\eta \psi
 \] holds with absolute convergence of the convolution integral. If $H_0< H$ is a half-space adapted subgroup and $\psi$ is real-valued, convolution over $G$ can be replaced by convolution over $G_0 = \mathbb{R}^d \rtimes H_0$, with twice the normalization factor. 
 \end{prop}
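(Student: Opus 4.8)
The plan is to establish the convolution identity $W_\eta u = \frac{1}{C_\psi} W_\psi u \ast W_\eta \psi$ by plugging the wavelet inversion formula for $\psi$ into the definition of $W_\eta u$ and interchanging the order of integration via Fubini. The convolution here is the convolution on the group $G = \mathbb{R}^d \rtimes H$; formally, starting from $\langle u \mid \pi(x,h)\eta \rangle$ and writing $\eta = \frac{1}{C_\psi} \int_G W_\psi \eta(x',h') \pi(x',h')\psi \, \dd(x',h')$, one obtains after applying $\pi(x,h)$ and using the covariance of the representation that the cross-term assembles into $(W_\psi u \ast W_\eta \psi)(x,h)$. The entire argument hinges on justifying the interchange of integrals, so the real work is an absolute-convergence estimate, after which the identity follows formally.

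The key step is therefore to show that the integrand of the double integral is absolutely integrable. I would bound the integrand by a product of two factors: the growth bound on $W_\psi u$ coming from Lemma~\ref{lem:wc_decay_general}\ref{enu:WaveletTrafoGeneralEstimateForDistributions}, which grows polynomially in $\|h\|,\|h^{-1}\|,|x|$ with exponent $N(u)$; and the decay bound on the cross-kernel $W_\eta \psi$ provided by Proposition~\ref{prop:decay-cross-kernel}, which for suitable $\beta_1,\beta_2,\beta_3$ yields decay of the form $(1+|x|)^{-\beta_1}(1+\|h\|)^{-\beta_2}(1+\|h^{-1}\|)^{-\beta_3}$. The spatial variable is handled by choosing $\beta_1$ large enough (here $\beta_1 > d + N(u)$ suffices for integrability of $(1+|x|)^{N(u)-\beta_1}$ over $\mathbb{R}^d$), while the $h$-integral is reduced to showing that $|\det(h)|^{-1/2-1}$ (one factor $|\det|^{-1/2}$ from the distribution estimate, one $|\det(h)|^{-1}$ from the left Haar measure on $G$) times the remaining polynomial-and-decay terms in $\|h\|,\|h^{-1}\|$ is integrable on $H$. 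This last integrability is exactly what Lemma~\ref{lem:theta-decays} delivers, once the surviving net decay exponents $s,t$ in the $\Theta_{s,t}$-function exceed the thresholds $\dim H + d + 1$ and $\dim H + d + dp$ with $p = 3/2$.

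The main obstacle — and the source of the stated lower bound on $r$ — is bookkeeping the exponents so that the convergence thresholds of Lemma~\ref{lem:theta-decays} are met while the hypotheses of Proposition~\ref{prop:decay-cross-kernel} ($\beta_1 \in \mathbb{N}$ and $r > d/2 + \ell_1(\beta_1+\beta_2+\beta_3)+\beta_1$) remain satisfiable. Concretely, I would select $\beta_1 = d + N(u) + 1$ to clear the spatial integral, then choose $\beta_2, \beta_3$ large enough that after absorbing the $(1+\|h\|)^{N(u)}$, $(1+\|h^{-1}\|)^{N(u)}$, and $|\det(h)|^{-3/2}$ factors one lands in the regime where $s \geq \dim H + d + 1$ and $t \geq \dim H + d + 3d/2$; tracking these through gives net requirements $\beta_2, \beta_3$ of size roughly $\dim H + O(d) + N(u)$, and feeding the resulting $\beta_1+\beta_2+\beta_3$ back into Proposition~\ref{prop:decay-cross-kernel} produces precisely the quoted condition $r > 2d + N(u) + \ell_1(5 + 2\dim(H) + 6d + 3N(u)) + 2$. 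I would verify that this arithmetic closes consistently, which is the only genuinely delicate point.

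Finally, for the half-space statement, the identical computation applies verbatim once the inversion formula over $G$ is replaced by the one over $G_0$ from Lemma~\ref{lem:integrate_halfspace}, which holds for real-valued admissible $\psi$ and carries the factor $\frac{2}{C_\psi}$; since $G_0 \subset G$, the absolute-convergence estimate established above restricts immediately to $G_0$, and one obtains $W_\eta u = \frac{2}{C_\psi} W_\psi u \ast_{G_0} W_\eta \psi$ with no further analytic input.
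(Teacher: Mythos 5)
Your overall architecture matches the paper's: plug the inversion formula for $\psi$ into $W_\eta u$, interchange, and control everything via Proposition \ref{prop:decay-cross-kernel} and Lemma \ref{lem:theta-decays}, with the half-space statement reduced to Lemma \ref{lem:integrate_halfspace}. But there is a genuine gap at the central analytic step: you justify the interchange by ``Fubini'' plus absolute convergence of the \emph{convolution} integral, i.e.\ you bound $|W_\psi u|\cdot|W_\eta\psi|$ using Lemma \ref{lem:wc_decay_general}(b). That estimate shows the right-hand side $W_\psi u \ast W_\eta \psi$ is well defined, but it does not prove the identity: $\langle u\mid\cdot\rangle$ is not an integral, so Fubini is unavailable, and the inversion formula $\eta = \frac{1}{C_\psi}\int_G W_\psi\eta(x,h)\,\pi(x,h)\psi\dd(x,h)$ holds a priori only in the weak $\LL^2$ sense --- a topology in which a general tempered distribution $u$ is not continuous. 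To pull $u$ inside the integral one needs the integral to converge as a vector-valued (Bochner) integral in a topology where $u$ acts continuously. This is exactly the mechanism of the paper's proof: it fixes $M$ with $N(u)\le M$ and $d<2M$ (namely $M\approx N(u)+\tfrac{d}{2}+1$), shows via Lemma \ref{lem:wc_decay_general}(a) (applied to $|\pi(x,h)\psi|_M$) together with Proposition \ref{prop:decay-cross-kernel} (applied to the cross-kernel $W_\psi\eta$, not $W_\eta\psi$) that the inversion integral converges in $|\cdot|_M$, and uses $\|f\|_2\le|f|_M$ to identify the Bochner limit with $\eta$; only then can evaluation of $u$ be interchanged with integration, and absolute convergence of the convolution integral falls out as a byproduct. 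Your proposal contains no substitute for this step, and it is the heart of the proof.

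The gap also surfaces in your exponent bookkeeping. Your choice $\beta_1=d+N(u)+1$ is tailored to the convolution-side factor $(1+|x|)^{N(u)-\beta_1}$, but the Bochner-side estimate requires integrability of $(1+|x|)^{M-\beta_1}$, i.e.\ $\beta_1>M+d\approx N(u)+\tfrac{3}{2}d+1$; the paper accordingly takes $\beta_1=M+d+1$. Feeding your smaller $\beta_1$ into $r>d/2+\ell_1(\beta_1+\beta_2+\beta_3)+\beta_1$ yields a bound of the form $r>\tfrac{3}{2}d+N(u)+1+\ell_1(\cdots)$, which does not reproduce the stated hypothesis; the quoted bound $r>2d+N(u)+\ell_1(5+2\dim(H)+6d+3N(u))+2$ matches exactly the paper's choices $\beta_1=M+d+1$, $\beta_2=2+\dim(H)+\tfrac{3}{2}d+N(u)$, $\beta_3=1+\dim(H)+3d+N(u)$ with $M\le N(u)+\tfrac{d}{2}+1$. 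So the claim that your arithmetic ``produces precisely the quoted condition'' does not hold as written, and repairing it leads you back to the Schwartz-norm Bochner argument you omitted.
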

 \begin{proof}
 We first recall the wavelet inversion formula 
 \begin{equation} \label{eqn:bochner}
 \eta = \frac{1}{C_\psi} \int_G W_\psi \eta(x,h) \pi(x,h) \psi \dd(x,h)~,
 \end{equation}
 which is valid in the weak $\LL^2$-sense. The proof is based on an alternative interpretation of the right hand side as a Bochner integral converging in $| \cdot |_M$, for a suitable choice of $M$. More precisely, we let 
 \[
 M = \left\{ \begin{array}{cc} N(u) + \frac{d}{2} +1 & d \mbox { even } \\ N(u) + \frac{d+1}{2} & d \mbox{ odd } \end{array} \right. ~.   
 \]

 To see that the desired convergence holds, we introduce the quantities
 \begin{equation} \label{eqn:beta_cond_conv}
 \beta_1 = M+d+ 1~,~\beta_2 = 2+ \dim(H)+\frac{3}{2}d+N(u)~,~\beta_3 = 1 + \dim(H) + 3d  + N(u)~, 
 \end{equation}
 and observe that 
 \begin{equation*} 
 r-d/2 - \ell_1(\beta_1+\beta_2+\beta_3) - \beta_1 >0~.
 \end{equation*}
 This implies, for sufficiently small $\epsilon>0$ that 
\begin{equation}\label{eqn:beta_cond_conv2}
  r-d/2 - \ell_1(\beta_1 + \epsilon +\beta_2+\beta_3) - \beta_1 -\epsilon >0~. 
\end{equation}
 We can then estimate the norm of the integrand in (\ref{eqn:bochner}) as follows: 
 \begin{eqnarray*}
 \lefteqn{\left| (W_\psi \eta)(x,h) \pi(x,h) \psi \right|_{M}} \\
  & \le & C_{M}|\psi|_{{M}}|W_\psi \eta (x,h)| (1+|x|)^{{M}} {|\det(h)|^{-1/2}}(1+\| h \|)^{{M}} (1+\|h^{-1} \|)^{{M}} \\
  & \le & {C'} (1+|x|)^{{{M}}-\beta_1-\epsilon}{|\det(h)|^{-1/2}} (1+\|h\|)^{{{M}}-\beta_2} (1+\| h^{-1} \|)^{{{M}}-\beta_3} \\
  & = & {C'} (1+|x|)^{{{M}}-\beta_1-\epsilon} |\det(h)|^{-1/2} \Theta_{\beta_2-{{M}},\beta_3-{{M}}}(h)~,
 \end{eqnarray*}
 where the constant $C'$ does not depend on $x,h$.
 Here the first inequality is due to Lemma \ref{lem:wc_decay_general}(a), and the second one follows from Proposition \ref{prop:decay-cross-kernel}, which is applicable thanks to \eqref{eqn:beta_cond_conv2}.
 Recall that $\dd(x,h)=\frac{\dd x \dd h}{|\det(h)|}$.
 Now Lemma \ref{lem:theta-decays} and our choice of $\beta_2,\beta_3$ imply (via $M \le N(u)+1+\frac{d}{2}$) that $ |\det(h)|^{{-3/2}} \Theta_{\beta_2-M,\beta_3-M}$ is integrable over $H$. Also, since $\beta_1+\epsilon-M>d$, the function $(1+|x|)^{M-\beta_1-\epsilon}$ is integrable over ${\mathbb R}^d$, and thus the right-hand side of (\ref{eqn:bochner}) is a Bochner integral converging in the completion $X$ of $\mathcal{S}(\mathbb{R}^d)$ with respect to $|\cdot|_M$. In fact, the inequality $d < 2M$ entails $\| f \|_2 \le |f|_M$, which implies that $X$ embeds continuously into $\LL^2(\mathbb{R}^d)$. Thus the limit of the Bochner integral coincides with the weak $\LL^2$-limit, i.e., with $\eta$. So the equation given in (\ref{eqn:bochner}) holds in $X$ as well. Since $N(u) \le M$, $u$ extends to a continuous functional on $X$, which entails that we can interchange evaluation of $u$ and Bochner integration, giving for all $(x',h')$
\begin{eqnarray*}
W_\eta u(x',h') & = & \langle \pi(x',h')^{-1} u, \eta \rangle \\
& = & \frac{1}{C_\psi}\int_G \overline{W_\psi \eta (x,h)} \langle \pi(x',h')^{-1} {u}, \pi(x,h) \psi \rangle \dd(x,h) \\ 
& = & \frac{1}{C_\psi}\int_G \langle u, \pi((x',h')(x,h)) \psi \rangle W_\eta \psi ((x,h)^{-1}) \dd(x,h) \\
& = & \frac{1}{C_\psi} \int_G W_\psi u (x,h) W_\eta \psi ((x,h)^{-1}(x',h')) \dd(x,h) \\
& = & \frac{1}{C_\psi}(W_\psi u \ast W_\eta \psi)(x',h')~. 
\end{eqnarray*}
Here absolute convergence of the convolution integral is guaranteed by Bochner convergence. 

The statement for real-valued wavelets {$\psi$} then follows from Lemma \ref{lem:integrate_halfspace}.
 \end{proof}

\section{Criteria for $N$-regular directed points: Compactly supported wavelets}
\label{sect:transfer}
After the groundwork laid in the previous section, we can now establish the first major result of our paper, namely Theorem \ref{thm:transfer_decay}. It shows that the decay order of wavelet coefficients is stable under exchange of wavelets, as long as both wavelets in the exchange have a certain minimal number of vanishing moments. As an auxiliary notion to guarantee this transfer, one further somewhat technical condition needs to be formulated. 
The intuition behind this definition and its necessity is discussed in Remark~\ref{rem:overlap_control}.

\begin{defn} \label{defn:overlap_control}
Let $V\Subset\mathcal{O}$.
We say that $H$ \emph{fulfills the $V$-overlap control condition with parameters $\gamma_0 \ge 0, \gamma_1,\gamma_2>0$} in direction $\xi \in \mathcal{O} \cap S^{d-1}$ if there exist
\begin{itemize}
    \item a half-space adapted subgroup $H_0< H$,
    \item  a neighborhood $W'$ of $\xi$ in $S^{d-1}$ and $R' > 0$;
\end{itemize} such that the following holds: For every pair $(W'',R'')$ consisting of a $\xi$-neighborhood $W'' \subseteq W'$ and $R''\geq R'$ there exists a $\xi$-neighborhood $W$ and $R>0$ such that for all $L \ge 0$ and all 
\[
s \ge \gamma_0 + \gamma_1 L~,~ t \ge \gamma_0 + \gamma_2 L
\]
the estimate
\[
\forall h \in K_o(W,V,R) : \int_{H_0 \setminus h^{-1}K_i(W'',V,R'')} \Theta_{s,t} (g) \dd g \le C \| h \|^L
\] holds with a constant $C$ independent of $h \in K_o(W,V,R)$.
\end{defn}

Clearly, if $H$ satisfies the $V$-overlap control condition with $W'$ and $R'$ as in the above definition, then it would satisfy the same condition if we replace $W'$ by
$\widetilde{W'}\subseteq W'$, and $R'$ by $\widetilde{R'}\geq R'$.
We next want to  establish that the overlap condition only needs to be verified for one direction. This argument is somewhat akin to the proof of the analogous statement for the cone approximation property \cite[Lemma 4.5]{FeFuVo}.
\begin{lem} \label{lem:overlap_control}
If $H$ fulfills the $V$-overlap control condition with parameters $\gamma_0 \ge 0, \gamma_1,\gamma_2>0$ at $\xi_0 \in \mathcal{O}$, then it fulfills this condition at all directions $\xi_1 \in \mathcal{O} \cap S^{d-1}$ with the same parameters $\gamma_0, \gamma_1,\gamma_2$.
\end{lem}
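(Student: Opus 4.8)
The plan is to transport the condition from $\xi_0$ to $\xi_1$ by means of a single group element intertwining the two directions, using Lemma~\ref{lem:cone_inclusion} to relate the relevant cones and the sets $K_i, K_o$. Since $\mathcal{O} = H^T\xi_0$ is a single orbit, I may fix $a \in H$ with $a^T\xi_0 = \xi_1$. The half-space adapted subgroup $H_0$ is a feature of $H$ and the orbit $\mathcal{O}$ alone (the defining identity $\mathcal{O} = (H_0^T\xi)\cup -(H_0^T\xi)$ holds for \emph{every} $\xi\in\mathcal{O}$), so I keep the same $H_0$, together with the data $W'_0,R'_0$ furnished by the $V$-overlap control condition at $\xi_0$. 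For the direction $\xi_1$ I am free to choose the outer data, and I simply take any $\xi_1$-neighborhood $W'_1\subseteq\mathcal{O}\cap S^{d-1}$ and any $R'_1>0$.

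Given a pair $(W'',R'')$ with $W''\subseteq W'_1$ a $\xi_1$-neighborhood and $R''\ge R'_1$, I would first apply part~\ref{lem:cone_inclusion-1} of Lemma~\ref{lem:cone_inclusion} (with $a^T\xi_0=\xi_1$) to obtain a $\xi_0$-neighborhood $\hat W_0$ and $\hat R_0>0$ with $a^T C(\hat W_0,\hat R_0)\subseteq C(W'',R'')$, whence $K_i(W'',V,R'')\supseteq a^{-1}K_i(\hat W_0,V,\hat R_0)$. Replacing $\hat W_0$ by the $\xi_0$-neighborhood $W''_0 := \hat W_0\cap W'_0$ and $\hat R_0$ by $R''_0 := \max(\hat R_0,R'_0)$ only shrinks the cone, so the inclusion $a^T C(W''_0,R''_0)\subseteq C(W'',R'')$ and hence $K_i(W'',V,R'')\supseteq a^{-1}K_i(W''_0,V,R''_0)$ persist, while now $W''_0\subseteq W'_0$ and $R''_0\ge R'_0$. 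The $V$-overlap control condition at $\xi_0$ then supplies a $\xi_0$-neighborhood $W_0$ and $R_0>0$ yielding the desired integral bound over $K_o(W_0,V,R_0)$. Finally I apply part~\ref{lem:cone_inclusion-2} of Lemma~\ref{lem:cone_inclusion} to $(W_0,R_0)$ to produce a $\xi_1$-neighborhood $W$ and $R>0$ with $K_o(W,V,R)\subseteq a^{-1}K_o(W_0,V,R_0)$; this $(W,R)$ is the pair I associate to $(W'',R'')$ at $\xi_1$.

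With these choices the estimate transfers directly, and no change of variables is needed. For $h\in K_o(W,V,R)$ one has $ah\in K_o(W_0,V,R_0)$, and the $K_i$-inclusion gives $(ah)^{-1}K_i(W''_0,V,R''_0)=h^{-1}a^{-1}K_i(W''_0,V,R''_0)\subseteq h^{-1}K_i(W'',V,R'')$. Passing to complements inside $H_0$ enlarges the integration domain, so, since $\Theta_{s,t}\ge 0$,
\[
\int_{H_0\setminus h^{-1}K_i(W'',V,R'')}\Theta_{s,t}(g)\,dg \le \int_{H_0\setminus (ah)^{-1}K_i(W''_0,V,R''_0)}\Theta_{s,t}(g)\,dg \le C_0\|ah\|^L
\]
for all $L\ge 0$, $s\ge\gamma_0+\gamma_1 L$, $t\ge\gamma_0+\gamma_2 L$, the last step being the $\xi_0$-estimate applied to $ah\in K_o(W_0,V,R_0)$. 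Since $\|ah\|\le\|a\|\,\|h\|$, the right-hand side is at most $C_0\|a\|^L\|h\|^L$, so setting $C:=C_0\|a\|^L$ (independent of $h$) establishes the condition at $\xi_1$ with the \emph{same} parameters $\gamma_0,\gamma_1,\gamma_2$.

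The argument is essentially bookkeeping, and the one point that needs care — the expected main obstacle — is the order of quantifiers in Definition~\ref{defn:overlap_control}: the $\xi_0$-condition may only be invoked for pairs lying \emph{inside} the admissible range $W''_0\subseteq W'_0$, $R''_0\ge R'_0$, whereas part~\ref{lem:cone_inclusion-1} of Lemma~\ref{lem:cone_inclusion} offers no such guarantee. The shrinking step $W''_0=\hat W_0\cap W'_0$, $R''_0=\max(\hat R_0,R'_0)$ resolves this, since shrinking the inner cone preserves the cone inclusion and hence the $K_i$-inclusion, and only enlarges the integration domain, which is harmless. I should also note that the transfer constant $C_0\|a\|^L$ is permitted to depend on $L$ (and on $s,t$), consistent with the requirement that $C$ be independent only of $h$.
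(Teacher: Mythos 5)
Your proof is correct and follows essentially the same route as the paper's: fix $a \in H$ with $a^T\xi_0 = \xi_1$, use Lemma~\ref{lem:cone_inclusion} to relate the $K_i$/$K_o$ sets at the two directions, shrink the transported test pair into the admissible range at $\xi_0$ via intersection and maximum, and transfer the estimate through $\|ah\| \le \|a\|\,\|h\|$ with constant $C_0\|a\|^L$. The only (harmless) deviation is that you take the outer data $(W'_1,R'_1)$ at $\xi_1$ arbitrary, whereas the paper first fixes it via Lemma~\ref{lem:cone_inclusion}\ref{lem:cone_inclusion-2}; since your argument never actually uses the constraints $W'' \subseteq W'_1$, $R'' \ge R'_1$, this yields the marginally stronger observation that the condition at $\xi_1$ holds for any choice of outer data.
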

\begin{proof}
Given $\xi_1 \in \mathcal{O} \cap S^{d-1}$, we pick $h \in H$ with $h^T \xi_0 = \xi_1$. 
Assume that $W_0'$ and $R_0'$ are given to guarantee the $V$-overlap control condition at $\xi_0$. 
By appealing to Lemma~\ref{lem:cone_inclusion}\ref{lem:cone_inclusion-2} we find $W'_1,R'_1$ so that 
$K_o(W'_1,V,R'_1) \subseteq h^{-1} K_o(W'_0,V,R'_0)$. Next, we show that $W'_1$ and $R'_1$
satisfy the requirements of the $V$-overlap control condition with parameters $\gamma_1,\gamma_2>0$ in direction $\xi_1$.

Let $W_1'' \subset W_1'$ and $R_1'' > R_1'$ be given. Applying Lemma~\ref{lem:cone_inclusion}\ref{lem:cone_inclusion-1}, we can then find a $\xi_0$-neighborhood $\widetilde{W_0}$ and $\widetilde{R_0}>0$ with  
\[
K_i(W_1'',V,R_1'') \supseteq h^{-1} K_i(\widetilde{W_0},V,\widetilde{R_0})~.
\]
Then $W_0'' = W_0' \cap \widetilde{W_0} \subset W_0'$ and $R_0'' = \max(R_0',\widetilde{R_0}) \ge R_0'$ fulfill 
\[
h^{-1}K_i(W_0',V,R_0') \supseteq h^{-1} K_i(W_0'',V,R_0'')~,
\] as well. 
By $V$-overlap control condition at $\xi_0$, given $W_0'',R_0''$, there exist a $\xi_0$-neighborhood $W_0$ and $R_0>0$ such that for all $L\geq 0$ and all 
$s\geq \gamma_0+\gamma_1 L$ and $t\geq \gamma_0+\gamma_2 L$ the estimate 
\begin{equation}\label{eq:new3}
    \forall\, l \in K_o(W_0,V,R_0) : \int_{H_0 \setminus l^{-1}K_i(W_0'',V,R_0'')} \Theta_{s,t} (g) \dd g \le C \| l \|^L
\end{equation}
holds with a constant $C$ independent of $l \in K_o(W_0,V,R_0)$.
Applying Lemma~\ref{lem:cone_inclusion}\ref{lem:cone_inclusion-2} again, there exists a $\xi_1$-neighborhood $W_1$ and $R_1>0$ such that $ K_o(W_1,V,R_1)\subseteq h^{-1} K_o(W_0,V,R_0)$.

Given $L$, pick $s \ge \gamma_0 + \gamma_1 L$ and $t \ge \gamma_0 + \gamma_2 L$. Let $h_1 \in K_o(W_1,V,R_1)$ be arbitrary, and denote $h h_1=h_0$. 
Now $h^{-1}K_i(W_0'',V,R_0'')\subseteq K_i(W_1'',V,R_1'')$ implies 
$h_0^{-1} K_i(W_0'',V,R_0'') \subseteq h_1^{-1} K_i(W_1'',V,R_1'')$, and thus 
\begin{equation}
\int_{H_0\setminus h_1^{-1}K_i(W_1'',V,R_1'')} \Theta_{s,t}(g) \dd g 
\le \int_{H_0 \setminus  h_0^{-1} K_i(W_0'',V,R_0'')} \Theta_{s,t} (g) \dd g ~. \label{eq:new2}
\end{equation}

Clearly, $h_0\in K_o(W_0,V,R_0)$, as $h_1\in K_o(W_1,V, R_1)$. So combining \eqref{eq:new3} and \eqref{eq:new2}, we get 
$$\int_{H_0\setminus h_1^{-1} K_i(W_1'',V,R_1'')} \Theta_{s,t}(g) \dd g \le  C \| h_0 \|^{L} 
\le \underbrace{C \| h \|^L }_{= C'} \| h_1 \|^{L}~,$$
which shows the desired property. 
\end{proof}

The lemma shows that the $V$-overlap condition is a global property on $\mathcal{O} \cap S^{d-1}$, once it is established for a single direction. Accordingly, we can drop the reference to the direction $\xi$ in the subsequent usage of that condition. 
 
 \begin{remark} \label{rem:context_van_moment}
 The following theorem is central to our paper, as it weakens the requirements on the analyzing wavelet from compact support in frequency to finite numbers of vanishing moments. 
 
 But besides this rather direct benefit, the theorem is also of independent interest for generalized wavelet analysis, since it expresses that, as soon as the analyzing wavelet is chosen from a suitable class, 
 a certain type of wavelet coefficient decay is independent of the precise choice of wavelet. In other words, this decay is recognized as an intrinsic property of the analyzed function. This is akin to a  fundamental observation from coorbit theory \cite{FeiGr0,FeiGr1,FeiGr2}. Recall that coorbit spaces quantify integrability properties of wavelet coefficients, and thereby provide a {\em global measure} of wavelet coefficient decay. An important foundational aspect of this theory is {\em consistency}, i.e.~the fact that (up to equivalence) this global decay behaviour is independent of the analyzing wavelet, see \cite{FeiGr0}. The more recent paper \cite{FuRe} provides guarantees for this independence in terms of vanishing moment conditions on wavelets, for the wavelet transforms of the type studied in our paper. 
 
 In this context, Theorem \ref{thm:transfer_decay} can be seen as an analog to the coorbit space results in \cite{FuRe}, by guaranteeing a similar type of consistency of wavelet coefficient decay localized inside cone-affiliated sets, which holds for all wavelets with sufficiently many vanishing moments. 
 \end{remark}
 
 \begin{thm} \label{thm:transfer_decay}
 Let $x_0\in \RR^d$ and $\xi_0\in \mathcal{O}\cap S^{d-1}$ be fixed.
Assume that the dual action is $V$-microlocally
admissible in direction $\xi_0$, with associated $\xi_0$-neighborhood $W_0$ and $R_0>0$, and with exponent $\alpha_1$. 
Let $N\in {\mathbb N}$ be fixed.

Let $u\in{\mathcal S}'(\RR^d)$ be a tempered distribution of order $N(u)$. 
Let $\psi_1,\psi_2\in {\mathcal S}({\mathbb R}^d)$ be  wavelets of  vanishing moments of order $r$  in $\mathcal{O}^c$, with $\psi_1$ being real-valued.
Assume that the parameters $\beta_1,\beta_2,\beta_3$ and $r$ fulfill the following inequalities: 
 \begin{itemize}
 \item[(i)] $\beta_1\geq N+N(u)+\alpha_1(N(u)+d/2)+d+1$, 
 \item[(ii)] $\beta_2\geq \dim H+N+d+1$, and $\beta_3\geq \dim H+2d$ (which implies $\Theta_{\beta_2-{N}, \beta_3-d}\in \LL^1(H)$),
 \item[(iii)] $\beta_1 \geq 2+ \frac{3}{2}d+N(u)$, $\beta_2 \geq 2+ \dim(H)+\frac{3}{2}d+N(u)$ and $\beta_3 \geq 1 + \dim(H) + 3d  + N(u)$, (which implies  $|\det(\cdot)|^{-p}\Theta_{\beta_2-N(u),\beta_3-N(u)}\in\LL^1(H)$ for {$p=\frac{1}{2}, \frac{3}{2}$}),
 \item[(iv)] $H$ fulfills the $V$-overlap control condition with parameters $\gamma_0\geq 0,\gamma_1,\gamma_2>0$, and 
 \begin{eqnarray*}
 \beta_2 & \ge &  \gamma_0 + \gamma_1 \alpha_1 \left(\frac{3}{2} d+N(u)\right) + N(u) + \gamma_1 N \\
 \beta_3 & \ge & \gamma_0 + \gamma_2 \alpha_1 \left(\frac{3}{2} d+N(u) \right) +N(u) + \frac{3}{2} d + \gamma_2 N
 \end{eqnarray*}
 \item[(v)] $r>d/2+\ell_1({\beta_1+\beta_2}+\beta_3)+\beta_1$, where $\ell_1 >0$ satisfies (\ref{assump}) of Proposition \ref{prop:decay-cross-kernel}.
 \end{itemize}

Let $\epsilon>0$ be given, and assume that there exist suitable choices of $\xi_0$-neighborhood $W',R'$ with $|W_{\psi_1}u(y,h)|\leq C\|h\|^N$ for every $y\in B_\epsilon(x_0)$ and every $h\in K_i(W',V,R')$, where the constant $C$ is independent of $h,y$.
 Then there exist $\xi_0$-neighborhood $W,R$ such that $|W_{\psi_2}u(y,h)|\leq C'\|h\|^N$ for every $y\in B_{\epsilon/2}(x_0)$ and every $h\in K_{o}(W,V,R)$, where the constant $C'$ is independent of the choice of $h,y$.
 \end{thm}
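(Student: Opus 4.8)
The plan is to substitute the cross-kernel convolution identity of Proposition~\ref{prop:5.3} into the definition of $W_{\psi_2}u$ and to split the resulting integral over $G_0=\mathbb{R}^d\rtimes H_0$ into a good region, where the hypothesised decay of $W_{\psi_1}u$ is available, and a bad region, where only the crude bound of Lemma~\ref{lem:wc_decay_general}\ref{enu:WaveletTrafoGeneralEstimateForDistributions} is used but where the overlap control condition forces strong cancellation. Since $\psi_1$ is real-valued and both $\psi_1,\psi_2$ have $r$ vanishing moments with $r$ as large as demanded by (iii) and (v), the half-space form of Proposition~\ref{prop:5.3} gives, for $x'\in B_{\epsilon/2}(x_0)$ and $h'\in K_o(W,V,R)$,
\[
W_{\psi_2}u(x',h')=\frac{2}{C_{\psi_1}}\int_{H_0}\int_{\mathbb{R}^d}W_{\psi_1}u(x,h)\,W_{\psi_2}\psi_1\bigl(h^{-1}(x'-x),\,h^{-1}h'\bigr)\,\frac{\dd x\,\dd h}{|\det h|}
\]
with absolutely convergent integral. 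The first task is to pin down the neighbourhoods. Starting from the overlap-control data and the hypothesis data, I would (after shrinking $W'$ and enlarging $R'$ as permitted by the observation following Definition~\ref{defn:overlap_control}) select $(W'',R'')$ with $W''\subseteq W'$, $R''\ge R'$, obtain the associated $(W,R)$, and invoke Lemma~\ref{lem:cone_inclusion}\ref{lem:cone_inclusion-1} — shrinking $W$ and enlarging $R$ once more if needed — to secure the inclusion $h'^{-1}K_i(W'',V,R'')\subseteq K_i(W',V,R')$ for every $h'\in K_o(W,V,R)$. This inclusion is exactly what lets the decay hypothesis be applied on the good region.

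Fixing $h'\in K_o(W,V,R)$, I would split the $h$-integration as $H_0=\bigl[h'^{-1}K_i(W'',V,R'')\bigr]\sqcup\bigl[H_0\setminus h'^{-1}K_i(W'',V,R'')\bigr]$. On the good part the integration variable lies in $K_i(W',V,R')$, so for $x\in B_\epsilon(x_0)$ the bound $|W_{\psi_1}u(x,h)|\le C\|h\|^N$ applies, while for $x\notin B_\epsilon(x_0)$ the separation $|x-x'|\ge\epsilon/2$ makes the spatial factor $(1+|h^{-1}(x'-x)|)^{-\beta_1}$ of the cross-kernel (Proposition~\ref{prop:decay-cross-kernel}) decay rapidly; performing the $x$-integration and then passing to the variable $g=h^{-1}h'$ turns $\|h\|^N$ into $\|h'\|^N$ times an integrable tail of the form $\Theta_{\beta_2-N,\beta_3-d}$, which is finite by (ii). Condition (i) is what guarantees convergence of the far-field spatial integral without degrading the power $\|h'\|^N$.

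On the bad part I would estimate $W_{\psi_1}u$ throughout by Lemma~\ref{lem:wc_decay_general}\ref{enu:WaveletTrafoGeneralEstimateForDistributions}, carry out the $x$-integration as before, and absorb the determinant, operator-norm and modular-function factors (via Hadamard's inequality and the bound on $\Delta_H$ used in Lemma~\ref{lem:theta-decays}) so that the remaining integrand takes the shape $[\,h'\text{-factors}\,]\cdot\Theta_{s,t}(h^{-1}h')$ with $s,t$ obtained from $\beta_2,\beta_3$ by the reductions recorded in (iv). The $V$-overlap control condition then bounds the integral of $\Theta_{s,t}$ over the bad region by $C\|h'\|^L$ with $L=\alpha_1(\tfrac32 d+N(u))+N$, while the accumulated determinant and inverse-norm factors are estimated through Lemma~\ref{lem:related_to_assumptions}\ref{enu:calculations-for-norms} by a power $\|h'\|^{-\alpha_1(\frac32 d+N(u))}$; since $\|h'\|$ is bounded on $K_o$ by Lemma~\ref{lem:related_to_assumptions}\ref{enu:HEstimatedByXi}, the product collapses to exactly $\|h'\|^N$. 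Adding the two regional bounds yields $|W_{\psi_2}u(x',h')|\le C'\|h'\|^N$ on $B_{\epsilon/2}(x_0)\times K_o(W,V,R)$, as claimed.

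I expect the bad-region estimate to be the principal obstacle: one must track the substitution $g=h^{-1}h'$ together with the coset structure of $H_0$ inside $H$, bound $\Theta_{\beta_2,\beta_3}(h^{-1}h')$ against the integration variable so that the exponents actually handed to the overlap control meet the thresholds $s\ge\gamma_0+\gamma_1 L$ and $t\ge\gamma_0+\gamma_2 L$, and verify that the combined power of $\|h'\|$ is neither too large nor too small — this is precisely what the delicately balanced inequalities (ii)–(iv) are arranged to ensure. A second, more geometric difficulty is upgrading the pointwise Lemma~\ref{lem:cone_inclusion} to the uniform inclusion $h'^{-1}K_i(W'',V,R'')\subseteq K_i(W',V,R')$ valid simultaneously for all $h'\in K_o(W,V,R)$, which requires a judicious shrinking of $W$ and enlarging of $R$.
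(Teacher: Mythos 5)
Your high-level architecture is indeed the paper's own: the half-space convolution identity of Proposition~\ref{prop:5.3}, a good/bad splitting, the hypothesis plus cross-kernel decay (Proposition~\ref{prop:decay-cross-kernel}) on the good part, the crude order bound of Lemma~\ref{lem:wc_decay_general}(b) plus overlap control on the bad part, and the final collapse via microlocal admissibility and boundedness of $\|h'\|$ on $K_o$ (your $L=\alpha_1(\tfrac{3}{2}d+N(u))+N$ is exactly the paper's). However, the geometric setup of your split has a genuine gap. The uniform inclusion $h'^{-1}K_i(W'',V,R'')\subseteq K_i(W',V,R')$ for all $h'\in K_o(W,V,R)$, on which your good region rests, is false, and no ``judicious shrinking of $W$ and enlarging of $R$'' can rescue it: every set $K_o(W,V,R)$ contains dilations of arbitrarily fine scale, and left translation by the inverse of such a dilation turns elements of $K_i(W'',V,R'')$ into large-scale matrices lying outside every cone-affiliated set. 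Concretely, in the shearlet coordinates of Lemma~\ref{lem:cone_t_a} one has $h(0,a')^{-1}h(0,a)=h(0,a/a')$ with $a/a'\to\infty$ as $a'\to 0$, while $K_i(W',V,R')\subseteq K_o(W',V,R')$ only contains scales below $2\tau_2/R'$. (Lemma~\ref{lem:cone_inclusion} is a statement about one fixed group element; in the paper it enters only through Lemma~\ref{lem:overlap_control}, not through the proof of Theorem~\ref{thm:transfer_decay}.) Your bad region is equally problematic: under the substitution $g=h^{-1}h'$, i.e.\ $h=h'g^{-1}$, the set $H_0\setminus h'^{-1}K_i(W'',V,R'')$ in the variable $h$ becomes (up to modular factors from the inversion) $H_0\setminus K_i(W'',V,R'')^{-1}(h')^{2}$ in the variable $g$, which is not the region $H_0\setminus h'^{-1}K_i(W'',V,R'')$ controlled by Definition~\ref{defn:overlap_control}.

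The paper resolves both problems at once by parametrizing the convolution so that the dilation at which $W_{\psi_1}u$ is evaluated is the \emph{product} $hg$, with $h\in K_o(W,V,R)$ fixed and $g$ the integration variable:
\[
I_1=\int_{h^{-1}B_{\epsilon/2}(0)}\int_{H_0}W_{\psi_1}u(y+hx,hg)\,W_{\psi_2}\psi_1(-g^{-1}x,g^{-1})\,\mathrm{d}x\,\frac{\mathrm{d}g}{|\det(g)|}\,,
\]
split at $g\in h^{-1}K$ versus $g\in H_0\setminus h^{-1}K$, where $K=K_i(W'',V,R'')$. On the good part the hypothesis applies because $hg\in K\subseteq K_i(W',V,R')$ holds trivially (only the monotonicity $W''\subseteq W'$, $R''\ge R'$ is needed, no cone lemma), and since $|W_{\psi_2}\psi_1(-g^{-1}x,g^{-1})|=|W_{\psi_1}\psi_2(x,g)|$, the $\Theta$-decay of Proposition~\ref{prop:decay-cross-kernel} lives in the same variable $g$, over exactly the region $H_0\setminus h^{-1}K$ that the overlap control condition addresses. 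This arrangement also keeps the exponents consistent with (i)--(iii): the hypothesis yields $\|hg\|^N\le\|h\|^N\|g\|^N$, loading $N$ onto the $\beta_2$-leg as condition (ii) anticipates, whereas your $h=h'g^{-1}$ loads $N$ onto $\beta_3$, for which (ii) leaves no slack (one would need roughly $\beta_3\ge \dim H+2d+N$ to integrate $\Theta_{\beta_2,\beta_3-N-d}$). Finally, on the far spatial region your appeal to the decay hypothesis is not available even for good dilations, since the hypothesis only covers base points in $B_\epsilon(x_0)$; the paper handles that region ($I_2$) with the crude bound of Lemma~\ref{lem:wc_decay_general}(b) over all of $H_0$, recovering the power $\|h\|^N$ through condition (i), Lemma~\ref{lem:related_to_assumptions}, and the boundedness of $\|h\|$ on $K_o(W,V,R)$.
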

 \begin{proof}
Throughout this proof note that assumption $(v)$ guarantees that Proposition \ref{prop:decay-cross-kernel} is available to estimate $\WW_{\psi_1} \psi_2$. 
 
 Suppose that $W',R'$ are as provided by the premise of the theorem, that is, 
 \begin{equation}\label{eq:premis}
     |\WW_{\psi_1}u(y,g)|\leq C\|g\|^N \ \mbox{ for every } y\in B_\epsilon(x_0) \ \mbox{ and every } g\in K_i(W',V,R'),
 \end{equation} 
 where the constant $C$ is independent of $y$ and $g$. 
 We first note that the premise remains true if we replace $W'$ by sufficiently small $W'' \subset W'$ and $R'$ by sufficiently large $R''>R'$ to make the overlap control condition applicable, and we choose $W,R$ as provided by that condition so that for $s,t$ as in Definition~\ref{defn:overlap_control}, we have
 \begin{equation}\label{eq:control-eq}
\exists C_1>0\  \forall h \in K_o(W,V,R) : \int_{H_0 \setminus h^{-1}K_i(W'',V,R'')} \Theta_{s,t} (g) \dd g \le C_1 \| h \|^L.
\end{equation}
Since replacing $W$ and $R$ by $W\cap W_0$ and $\max\{R,R_0\}$ does not disturb the conclusion of the overlap control condition, we can also assume that  $W\subseteq W_0$ and $R\geq R_0$.

We start out by verifying that, under the assumptions of the theorem, the convolution relation 
\[ W_{\psi_2} u=\frac{2}{C_{\psi_1}}(\WW_{\psi_1}u*\WW_{\psi_2}\psi_1) 
\] holds with convolution taken over $\mathbb{R}^d \rtimes H_0$. This is done most easily by comparing the assumptions (iii) and (v) with those of Proposition~\ref{prop:5.3}, and with (\ref{eqn:beta_cond_conv}) and (\ref{eqn:beta_cond_conv2}).

Throughout the following computations, let $h \in K_o(W,V,R)$, and denote $K = K_i(W'',V,R'')$.
We now split the convolution integral into $\WW_{\psi_2}u(y,h)=\frac{2}{C_{\psi_1}}(I_1+I_2)$, where
 \begin{eqnarray*}
 I_1&=&\int_{h^{-1}B_{\epsilon/2}(0)}\int_{H_0} \WW_{\psi_1}u(y+hx,hg)\WW_{\psi_2}\psi_1(-g^{-1}x,g^{-1})\dd x\frac{\dd g}{|\det(g)|},\\
I_2 &=&\int_{\RR^d\setminus h^{-1}B_{\epsilon/2}(0)}\int_{H_0}\WW_{\psi_1}u(y+hx,hg)\WW_{\psi_2}\psi_1(-g^{-1}x,g^{-1}) \dd x\frac{\dd g}{|\det(g)|}.
 \end{eqnarray*}
Let $y\in B_{\epsilon/2}(x_0)$ and  $h\in K_o(W,V,R)$ be given. We prove appropriate upper bounds for $I_1$ and $I_2$ separately.
 \begin{eqnarray*}
 |I_1|&=&\left|\int_{h^{-1}B_{\epsilon/2}(0)}\int_{H_0} \WW_{\psi_1}u(y+hx,hg)\WW_{\psi_2}\psi_1(-g^{-1}x,g^{-1})\dd x\frac{\dd g}{|\det(g)|}\right|\\
&\leq&\int_{h^{-1}B_{\epsilon/2}(0)}\int_{H_0} \left|\WW_{\psi_1}u(y+hx,hg)\WW_{\psi_1}\psi_2(x,g)\right|\dd x\frac{\dd g}{|\det(g)|}\\
&=&I_1'+I_1'',
\end{eqnarray*}
where we split the integral into $I'_1$ and $I''_1$ by taking the inner integral over $h^{-1}K$ and $H_0\setminus h^{-1}K$ respectively. The point of this separation of integrals is that for all $(x,g) \in h^{-1}(B_{\epsilon/2}(0)) \times h^{-1}K$, one has $(y+hx,hg) \in B_{\epsilon}(x_0) \times K$, and thus the decay assumptions on $W_{\psi_1} u$ can be used to continue
\begin{eqnarray*}
I'_1&=&\int_{h^{-1}B_{\epsilon/2}(0)}\int_{h^{-1}K} \left|\WW_{\psi_1}u(y+hx,hg)\WW_{\psi_1}\psi_2(x,g)\right|\dd x\frac{\dd g}{|\det(g)|}\\
&\leq&C\|h\|^N\int_{h^{-1}B_{\epsilon/2}(0)}\int_{h^{-1}K}{\|g\|^N}|\WW_{\psi_1}\psi_2(x,g)|\dd x\frac{\dd g}{|\det(g)|}\\
&\leq&CD\|h\|^N\int_{h^{-1}B_{\epsilon/2}(0)}\int_{h^{-1}K} (1+|x|)^{-\beta_1} (1+\|g\|)^{{N}-\beta_2}(1+\|g^{-1}\|)^{-\beta_3}\dd x\frac{\dd g}{|\det(g)|}\\
&\leq&CD\|h\|^N\left(\int_{h^{-1}B_{\epsilon/2}(0)}(1+|x|)^{-\beta_1} \dd x\right)\left(\int_{h^{-1}K}(1+\|g\|)^{{N}-\beta_2}{\|g^{-1}\|^d}(1+\|g^{-1}\|)^{-\beta_3} {\dd g}\right)\\
&\leq&CD\|h\|^N\left(\int_{\RR^d}(1+|x|)^{-\beta_1} \dd x\right)\left(\int_{H}(1+\|g\|)^{-(\beta_2-{N})}(1+\|g^{-1}\|)^{-(\beta_3-d)}{\dd g}\right)\\
&\leq&D_1\|h\|^N,
 \end{eqnarray*}
 where the two integrals in the penultimate line are bounded, since $\beta_1> d$ and $\Theta_{\beta_2-{N}, \beta_3-d}$ belongs to $\LL^1(H)$ thanks to condition (ii). 
 Also note that $D_1$ is a product of the constant from Proposition \ref{prop:decay-cross-kernel}, the constant $C$ from the assumption, and terms depending on $\beta_1,\beta_2$, and $\beta_3$.
 
 To bound $I_1''$, we use (b) of Lemma \ref{lem:wc_decay_general} and Proposition~\ref{prop:decay-cross-kernel}. To avoid clutter of notation, we will denote $N(u)$ by $N'$.
 %
 \begin{eqnarray*}
I''_1&=&\int_{h^{-1}B_{\epsilon/2}(0)}\int_{H_0\setminus h^{-1}K} \left|\WW_{\psi_1}u(y+hx,hg)\WW_{\psi_1}\psi_2(x,g)\right|\dd x\frac{\dd g}{|\det(g)|}\\
&\leq& \int_{h^{-1}B_{\epsilon/2}(0)}\int_{H_0\setminus h^{-1}K}C_2|\det(hg)|^{-1/2}(1+\|g^{-1}h^{-1}\|)^{N'}\max\{1, \|hg\|^{N'}\}(1+|y+hx|)^{N'}\\
&&\qquad \qquad\qquad \quad\cdot(1+|x|)^{-\beta_1} (1+\|g\|)^{-\beta_2}(1+\|g^{-1}\|)^{-\beta_3} \dd x\frac{\dd g}{|\det(g)|}\\
&=& C_2J_1J'_1,
 \end{eqnarray*}
 where $C_2$ is the product of the constant terms from part (b) of Lemma \ref{lem:wc_decay_general} and Proposition \ref{prop:decay-cross-kernel}, and 
 \begin{eqnarray*}
J_1&=&\int_{h^{-1}B_{\epsilon/2}(0)} (1+|x|)^{-\beta_1}(1+|y+hx|)^{N'} \dd x,\\
J'_1&=&\int_{H_0\setminus h^{-1}K}  |\det(hg)|^{-1/2}(1+\|g^{-1}h^{-1}\|)^{N'} (1+\| hg \|)^{N'} \\ & & (1+\|g\|)^{-\beta_2}(1+\|g^{-1}\|)^{-\beta_3}\frac{\dd g}{|\det(g)|}.\\
 \end{eqnarray*}
It is easy to obtain a bound for $J_1$. Indeed, we note that $|y+hx|\leq |y|+|hx|\leq |x_0|+\epsilon\leq 2|x_0|$, as $x\in h^{-1}B_{\epsilon/2}(0)$, $y\in B_{\epsilon/2}(x_0)$, and  without loss of generality we can assume $\epsilon\leq |x_0|$. So,
$$J_1\leq (1+2|x_0|)^{N'}|\det(h)|^{-1}\int_{B_{\epsilon/2}(0)} (1+|h^{-1}x|)^{-\beta_1} \dd x\leq (1+2|x_0|)^{N'}|\det(h)|^{-1}\lambda_{\RR^d}({B_{\epsilon/2}(0)}),$$
where $\lambda_{\RR^d}({B_{\epsilon/2}(0)})$ is the volume of the $\frac{\epsilon}{2}$-ball.

We next estimate the various factors in the integrand of $J_1'$ as follows: 
\begin{eqnarray*}
    \frac{|{\rm det}(hg)|^{-1/2}}{|{\rm det}(g)|}  & \le &   |{\rm det}(h)|^{-1/2} (1+\| g^{-1} \|)^{\frac{3}{2}d}~, \\
    (1+\| g^{-1} h^{-1} \|)^{N'} & \le & (1+\| h^{-1} \|)^{N'} (1+\| g^{-1} \|)^{N'} ~,  \\
    (1+\| hg\|)^{N'} & \le & (1+\|h\|)^{N'}(1+\|g\|)^{N'}~.
\end{eqnarray*}
Here we used Hadamard's inequality to estimate the determinant against the norm, and submultiplicativity of the norm. 
These estimates lead to 
\begin{eqnarray*}
{J'_1} & \le &  |\det(h)|^{-1/2} (1+\|h\|)^{N'} (1+\|h^{-1}\|)^{N'} \int_{H_0 \setminus  h^{-1}K} \Theta_{\beta_2-N',\beta_3-N'-\frac{3}{2}  d}(g) dg \\
& \le & {C_1} |\det(h)|^{-1/2} (1+\|h\|)^{N'} (1+\|h^{-1}\|)^{N'} \| h \|^L~.
\end{eqnarray*} Here the second inequality used the overlap control condition, applied with 
\[
{s} = \beta_2 - N'~,~ {t} = \beta_3-N'-\frac{3}{2}d~,~ L = \alpha_1 \left(\frac{3}{2} d+N' \right) + N~.
\] The applicability of this condition is guaranteed by assumption (iv),
as well as $h \in K_o(W,V,R)$, 
$K = K_i(W'',V,R'')$ 
and the choice of $W,R$ depending on $W'',R''$ according to the mentioned condition (see \eqref{eq:control-eq}).

Using Hadamard's inequality, followed by property~\ref{enu:NormOfInverseEstimateOnKo} of microlocal admissibility (Definition~\ref{defn:micro_regular}) for $h\in K_o(W,V,R)$, we have 
$$|\det(h^{-1})|^{3/2}\leq \|h^{-1}\|^{\frac{3}{2}d}\lesssim \|h\|^{-\frac{3}{2}\alpha_1 d},$$
which allows to estimate $I_1''$:
\begin{eqnarray*}
I_1'' & \le&   {C_3} |\det(h)|^{-3/2} (1+\|h\|)^{N'} (1+\|h^{-1}\|)^{N'} \| h \|^{N+\alpha_1(\frac{3}{2} d+N') } \\ 
& \le& {C_4 (1+\|h\|)^{N'} (1+\|h^{-1}\|)^{N'}  \| h \|^{N+\alpha_1 N' }~.}
\end{eqnarray*}
Now, if $\|h^{-1}\|\leq 1$, then $1+\|h^{-1}\|\leq 2$; otherwise, $1+\|h^{-1}\|\leq 2\|h^{-1}\|\lesssim \|h\|^{-\alpha_1}$ by microlocal admissibility. In any case, we obtain
\[
 (1+\|h^{-1}\|)^{N'}  \| h \|^{N+\alpha_1 N'}
 \le C_5 \cdot\max\{\| h \|^N, \| h \|^{N+\alpha_1 N'}\} ~.\]
 Furthermore, Lemma~\ref{lem:related_to_assumptions} supplies that $\|h \|$ is bounded on $K_o(W,V,R)$, whence we finally arrive at 
 \[
I_1'' \le C_6 \| h \|^N~.
\] This also concludes the estimate of $I_1$. 

%
 We now estimate $I_2$, as follows:
 \begin{eqnarray*}
|I_2|&\leq& \int_{\RR^d\setminus h^{-1}B_{\epsilon/2}(0)}\int_{H_0} \left|\WW_{\psi_1}u(y+hx,hg)\right|\left|\WW_{\psi_2}\psi_1(-g^{-1}x,g^{-1})\right| \dd x\frac{\dd g}{|\det(g)|}\\
&=& \int_{\RR^d\setminus h^{-1}B_{\epsilon/2}(0)}\int_{H_0} \left|\WW_{\psi_1}u(y+hx,hg)\right|\left|\WW_{\psi_1}\psi_2(x,g)\right| \dd x\frac{\dd g}{|\det(g)|}\\
&\leq&\int_{\RR^d\setminus h^{-1}B_{\epsilon/2}(0)}\int_{H_0} C_2|\det(hg)|^{-1/2}(1+\|g^{-1}h^{-1}\|)^{N'}\max\{1, \|hg\|^{N'}\}(1+|y+hx|)^{N'}\\
&&\qquad \qquad\qquad \quad\cdot(1+|x|)^{-\beta_1} (1+\|g\|)^{-\beta_2}(1+\|g^{-1}\|)^{-\beta_3} \dd x\frac{\dd g}{|\det(g)|}\\
&=& C_2I'_2I''_2,
 \end{eqnarray*}
 where $C_2$ is the product of the constant terms from part (b) of Lemma \ref{lem:wc_decay_general} and Proposition \ref{prop:decay-cross-kernel}, and 
 \begin{eqnarray*}
I'_2&=&\int_{\RR^d\setminus h^{-1}B_{\epsilon/2}(0)} (1+|x|)^{-\beta_1}(1+|y+hx|)^{N'} \dd x,\\
I''_2&=&\int_{H_0}  |\det(hg)|^{-1/2}(1+\|g^{-1}h^{-1}\|)^{N'}\max\{1, \|hg\|^{N'}\}(1+\|g\|)^{-\beta_2}(1+\|g^{-1}\|)^{-\beta_3}\frac{\dd g}{|\det(g)|}.\\
 \end{eqnarray*}
Note that for every $z\in \RR^d$, we have
$$(1+|z|)^{N'}=(1+|h(h^{-1}z)|)^{N'}\leq (1+\|h\||h^{-1}z|)^{N'}\leq \max\{1,\|h\|\}^{N'}(1+|h^{-1}z|)^{N'},$$
and since $\epsilon\leq |x_0|$, for every $x\in{\mathbb R}^d$
$$(1+|y+x|)^{N'}\leq (\frac{3}{2}|x_0|+1)^{N'}(1+|x|)^{N'} \ \mbox{ for }\ y\in B_{\epsilon/2}(x_0).$$
So,  letting $\omega_{d-1}$ denote the surface area of the $(d-1)$-sphere of radius 1, we get
\begin{eqnarray*}
I'_2& = &\int_{\RR^d\setminus h^{-1}B_{\epsilon/2}(0)} (1+|x|)^{-\beta_1}(1+|y+hx|)^{N'} \dd x\\
&=&|\det(h)|^{-1}\int_{\RR^d\setminus B_{\epsilon/2}(0)} (1+|h^{-1}x|)^{-\beta_1}(1+|y+x|)^{N'} \dd x\\
&\leq&(\frac{3}{2}|x_0|+1)^{N'}|\det(h)|^{-1}\int_{\RR^d\setminus B_{\epsilon/2}(0)} (1+|h^{-1}x|)^{-\beta_1}(1+|x|)^{N'} \dd x\\
&\leq&(\frac{3}{2}|x_0|+1)^{N'}|\det(h)|^{-1}\max\{1,\|h\|\}^{N'}\int_{\RR^d\setminus B_{\epsilon/2}(0)} (1+|h^{-1}x|)^{-\beta_1+N'} \dd x\\
&\leq&(\frac{3}{2}|x_0|+1)^{N'}\max\{1,\|h\|\}^{N'}\int_{\RR^d\setminus h^{-1}B_{\epsilon/2}(0)} (1+|x|)^{-\beta_1+N'} \dd x\\
&\leq&(\frac{3}{2}|x_0|+1)^{N'}\max\{1,\|h\|\}^{N'}\int_{\RR^d\setminus B_{\frac{\epsilon}{2\|h\|}}(0)} \frac{1}{(1+|x|)^{\beta_1-N'}} \dd x\\
&=&\omega_{d-1}(\frac{3}{2}|x_0|+1)^{N'}\max\{1,\|h\|\}^{N'}\int_{\frac{\epsilon}{2\|h\|}}^\infty \frac{r^{d-1}}{(1+r)^{\beta_1-N'}} \dd r\\
&\leq&\frac{\omega_{d-1}(\frac{3}{2}|x_0|+1)^{N'}\max\{1,\|h\|\}^{N'}}{{\beta_1-N'-d}}(\frac{\epsilon}{2})^{d+N'-\beta_1}\|h\|^{\beta_1-N'-d}\\
&\leq& C_6\max\{\|h\|^{\beta_1-N'-d},\|h\|^{\beta_1-d}\},\\
 \end{eqnarray*}
where we used the fact that $B_{\frac{\epsilon}{2\|h\|}}(0)\subseteq h^{-1}B_{\epsilon/2}(0)$, and  $\beta_1-N'>d$. 
Here, $C_6$ is a constant that only depends on $x_0, \beta_1, N(u)$, and $d$.

Finally, using the submultiplicativity of $h\mapsto (1+\|h\|)^{N'}$, we have
 \begin{eqnarray*}
I''_2&=&\int_{{H_0}}  |\det(hg)|^{-1/2}(1+\|g^{-1}h^{-1}\|)^{N'}\max\{1, \|hg\|^{N'}\}(1+\|g\|)^{-\beta_2}(1+\|g^{-1}\|)^{-\beta_3}\frac{\dd g}{|\det(g)|}\\
&\leq&\frac{{(1+\|h\|)^{N'}}(1+\|h^{-1}\|)^{N'}}{|\det(h)|^{1/2}}\int_{{H_0}}  |\det(g)|^{-1/2}(1+\|g^{-1}\|)^{N'-\beta_3}(1+\|g\|)^{N'-\beta_2}\frac{\dd g}{|\det(g)|}\\
&\leq&{C_7}\|h\|^{-\alpha_1(N'+d/2)}\||\det(\cdot)|^{{-3/2}}\Theta_{\beta_2-N',\beta_3-N'}\|_{\LL^1(H)},\\
 \end{eqnarray*}
 where in the last inequality, we used parts (b) and (c) of Lemma~\ref{lem:related_to_assumptions}.
 Putting all these together, we get
\begin{eqnarray*}
|I_2|&\leq& D_2\max\{\|h\|^{\beta_1-N'-d-\alpha_1(N'+d/2)},\|h\|^{\beta_1-d-\alpha_1(N'+d/2)}\}\\
\end{eqnarray*}
where $D_2=C_2C_6C_7 \||\det(\cdot)|^{-3/2}\Theta_{\beta_2-N',\beta_3-N'}\|_{\LL^1(H)}$, which is finite thanks to condition (iii). 
Using condition (i) on $\beta_1$ and the fact that $\|h\|$ is bounded on $K_o(W,V,R)$, we obtain $|I_2|\leq D_3\|h\|^N$, where $D_3$ is clearly a uniform bound independent of the choice of $h \in K_o(W,V,R)$ and $x\in B_{\epsilon/2}(0)$. 
\end{proof}

\begin{remark} \label{rem:overlap_control}
With hindsight on the proof of Theorem \ref{thm:transfer_decay}, we can provide some more intuition regarding the overlap control condition. The initial idea of the proof is to split the integration domain in the convolution formula 
\[ W_{\psi_2} u=\frac{1}{C_{\psi_1}}(\WW_{\psi_1}u*\WW_{\psi_2}\psi_1) ~,
\] into the part where the decay assumptions on $\WW_{\psi_1} u$ apply, and its complement. On the latter, we can only combine moderate growth of $\WW_{\psi_1} u$  with rapid decay of $\WW_{\psi_2} \psi_1$ to achieve the desired estimates. This rationale motivated the introduction of the auxiliary functions $\Theta_{s,t}$ and the various related results. The overlap control condition is intended to provide the right type of estimate that makes the general approach work. 

However, in the presence of a half-space adapted subgroup $H_0$ (e.g., in the shearlet case), rapid decay of $\WW_{\psi_2} \psi_1$ will not be enough, as long as we integrate over the whole group. To see this, recall that $\mathcal{O} = (H_0^{T} \xi_0) \cup -(H_0^{T} \xi_0)$, and for simplicity consider a direction $\xi \in H_0^{T} \xi_0$ in the proof of Theorem 5.14. Assuming further that $V,W \subset H_0^T \xi_0$, it follows that $K_{i/o}(W,V,R) \subset H_0$. As a consequence, it turns out that the part of the integration domain where we solely rely on the decay estimates for $W_{\psi_2} \psi_1$ to estimate $W_{\psi_2} u (x,h)$ contains the full coset $H \setminus H_0$, \textit{independent of the choice of } $h \in K_o(W,V,R)$. As a consequence, the contribution of the integral over the coset is a constant. 

Now it is important to remember that the right-hand side of the desired inequality is a positive power of $h$, which becomes arbitrarily small. Hence any nontrivial contribution from the coset $H \setminus H_0$ is essentially fatal to our efforts. In particular, an analog of Definition \ref{defn:overlap_control} with the whole group $H$ replacing $H_0$ would not make sense. This is why we replace integration over $H$ by integration over $H_0$, as in our formulation of the overlap control condition. 

The only apparent alternative to this choice appears to make assumptions guaranteeing that either the wavelets or the signal have Fourier transforms which vanish on a half-space. However, these assumptions are incompatible with compact support of the involved functions. 
\end{remark}

Note that the conclusion of the theorem allows to derive a decay statement for \emph{outer} cone-affiliated sets of the type $K_o(W,V,R)$ from decay assumptions for \emph{inner} cone-affiliated sets of the type $K_i(W',V,R')$. Thus, as stated in the following corollary, Theorem \ref{thm:transfer_decay} allows to close one of the two gaps mentioned in Remark \ref{rem:gap_almost_char}. 
\begin{cor}\label{cor:close-gap}
Let $u\in{\mathcal S}'({\mathbb R}^d)$ and $(x,\xi)\in {\mathbb R}^d\times ({\mathcal O}\cap S^{d-1})$.
Assume that the dual action is $V$-microlocally admissible in direction $\xi$ with parameters $\alpha_1, \alpha_2$. Also assume that $H$ fulfills the $V$-overlap control condition, and there exists  $\ell_1 >0$ satisfying (\ref{assump}) of Proposition \ref{prop:decay-cross-kernel}. Then, we have
\begin{enumerate}[label=(\alph*)]
\item \label{enu:cor-RegularDirectedPointNecessaryCondition}If $(x,\xi)$
is an $N$-regular directed point of $u$, then there exists a neighborhood
$U$ of $x$, some $R>0$ and a $\xi$-neighborhood $W\subset S^{d-1}$
such that for \emph{all} real-valued admissible $\psi\in\mathcal{S}(\mathbb{R}^{d})$
with ${\rm supp}(\widehat{\psi})\subset V$, the following estimate
holds: 
\[
\exists\, C >0\,\forall\, y\in U\,\forall\, h\in K_{o}(W,V,R)~:~|W_{\psi}u(y,h)|\le C \cdot\|h\|^{N-\alpha_1 d/2}\!\!.
\]
\item \label{enu:corRegularDirectedPointSufficientCondition}Let $\psi\in\mathcal{S}(\mathbb{R}^{d})$
be admissible with ${\rm supp}(\widehat{\psi})\subset V$. Assume there exists 
a neighborhood $U$ of $x$,  some $R>0$ and
a $\xi$-neighborhood $W\subset S^{d-1}$ such that 
\[\exists\, C>0\,\forall\, y\in U\,\forall\, h\in K_{o}(W,V,R)~:~|W_{\psi}u(y,h)|\le C \cdot\left\Vert h\right\Vert^{\alpha_1 N+\frac{3}{2} \alpha_1 d + \alpha_2}
\]
Then $(x,\xi)$ is an $N$-regular directed point of $u$. 
\end{enumerate}
\end{cor}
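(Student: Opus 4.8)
The plan is to dispatch part~\ref{enu:corRegularDirectedPointSufficientCondition} immediately and to concentrate all the work on part~\ref{enu:cor-RegularDirectedPointNecessaryCondition}. Indeed, the hypotheses and the sufficient decay condition in part~\ref{enu:corRegularDirectedPointSufficientCondition} are word-for-word those of Theorem~\ref{thm:almost_char}\ref{enu:RegularDirectedPointSufficientCondition} (same microlocal admissibility assumptions, same admissible bandlimited $\psi$, same decay exponent $\alpha_1 N+\frac{3}{2}\alpha_1 d+\alpha_2$ on the \emph{same} outer sets $K_o(W,V,R)$), so part~\ref{enu:corRegularDirectedPointSufficientCondition} follows by simply quoting that result. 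The genuine content is therefore part~\ref{enu:cor-RegularDirectedPointNecessaryCondition}, whose point is to replace the inner sets $K_i$ appearing in the necessary criterion of Theorem~\ref{thm:almost_char}\ref{enu:RegularDirectedPointNecessaryCondition} by the larger outer sets $K_o$; this is precisely the gain furnished by the transfer mechanism of Theorem~\ref{thm:transfer_decay}.

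For part~\ref{enu:cor-RegularDirectedPointNecessaryCondition} I would begin from the assumption that $(x,\xi)$ is an $N$-regular directed point and apply Theorem~\ref{thm:almost_char}\ref{enu:RegularDirectedPointNecessaryCondition}. This yields a neighborhood $U_0$ of $x$, a radius $R'>0$ and a $\xi$-neighborhood $W'\subset S^{d-1}\cap\mathcal{O}$, \emph{all independent of the wavelet}, such that every admissible $\psi$ with ${\rm supp}(\widehat\psi)\subset V$ obeys $|W_\psi u(y,h)|\le C\|h\|^{N-\alpha_1 d/2}$ for $y\in U_0$ and $h\in K_i(W',V,R')$. The decisive step is then, for a fixed real-valued admissible $\psi$ with ${\rm supp}(\widehat\psi)\subset V$, to invoke Theorem~\ref{thm:transfer_decay} with the choice $\psi_1=\psi_2=\psi$: because $\psi$ is bandlimited it has vanishing moments of every order, so the moment requirement and condition~(v) can be met for any $r$, and the remaining parameters $\beta_1,\beta_2,\beta_3$ are fixed large enough to satisfy the finitely many lower bounds (i)--(iv), which is possible since microlocal admissibility, the $V$-overlap control condition, and the existence of $\ell_1$ are all in force by hypothesis. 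Crucially, $\psi$ being real-valued supplies exactly the real-valuedness demanded of the source wavelet $\psi_1$, and using $\psi$ simultaneously as source and target avoids ever having to construct an auxiliary wavelet; this is the reason part~\ref{enu:cor-RegularDirectedPointNecessaryCondition} is phrased only for real-valued $\psi$.

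Theorem~\ref{thm:transfer_decay} then returns a $\xi$-neighborhood $W$ and a radius $R>0$ with $|W_\psi u(y,h)|\le C'\|h\|^{N-\alpha_1 d/2}$ on $B_{\epsilon/2}(x)\times K_o(W,V,R)$, where $\epsilon$ is chosen so that $B_\epsilon(x)\subseteq U_0$. I would emphasize that the output region $(W,R)$ is manufactured solely from $W',R'$, $V$, the microlocal data $W_0,R_0$, and the overlap-control parameters $\gamma_0,\gamma_1,\gamma_2$, so it does \emph{not} depend on the particular $\psi$, whereas the constant $C'$ may; this matches the quantifier order of the claim, where a single $U=B_{\epsilon/2}(x)$, $R$ and $W$ precede the phrase ``for all $\psi$ there exists $C$''. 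Collecting these yields exactly the asserted estimate on $K_o$.

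The one delicate point — and the main, if mild, obstacle — is that the exponent being transported is $N-\alpha_1 d/2$, which is generally not an integer, whereas Theorem~\ref{thm:transfer_decay} is stated for an integer decay order. I would resolve this by observing that integrality is never used in the proof of Theorem~\ref{thm:transfer_decay}: its conditions (i)--(v) are inequalities that remain meaningful with the real number $N-\alpha_1 d/2$ substituted for $N$, and every estimate in that proof treats the decay order as a real parameter. One cannot circumvent this by transferring a nearby integer exponent instead: since $\|h\|$ is bounded on $K_o(W,V,R)$ by Lemma~\ref{lem:related_to_assumptions}\ref{enu:HEstimatedByXi}, an integer order strictly below $N-\alpha_1 d/2$ would produce a strictly weaker bound in the small-scale regime $\|h\|\to 0$ and hence would not recover the claimed exponent. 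With the real-exponent reading of Theorem~\ref{thm:transfer_decay} understood, the transferred estimate is precisely the bound asserted in part~\ref{enu:cor-RegularDirectedPointNecessaryCondition}, completing the proof.
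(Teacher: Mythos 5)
Your proposal is correct and follows essentially the same route as the paper's own (two-line) proof: part \ref{enu:corRegularDirectedPointSufficientCondition} is quoted directly from Theorem~\ref{thm:almost_char}\ref{enu:RegularDirectedPointSufficientCondition}, while part \ref{enu:cor-RegularDirectedPointNecessaryCondition} combines Theorem~\ref{thm:almost_char}\ref{enu:RegularDirectedPointNecessaryCondition} with the transfer Theorem~\ref{thm:transfer_decay} applied to $\psi_1=\psi_2=\psi$, using that a bandlimited $\psi$ has vanishing moments of every order so conditions (i)--(iii) and (v) can be arranged. Your extra observations --- that real-valuedness of $\psi$ is what licenses the choice $\psi_1=\psi$, that the output pair $(W,R)$ is wavelet-independent (matching the quantifier order of the statement), and that the non-integer exponent $N-\alpha_1 d/2$ requires reading Theorem~\ref{thm:transfer_decay} with a real decay order, which its proof supports --- are all sound refinements of details the paper leaves implicit.
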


\begin{proof}
Since $\psi$ is bandlimited, it satisfies vanishing moments condition for any $r$. So, conditions (i), (ii), (iii) and (v) of Theorem~\ref{thm:transfer_decay} are automatically satisfied
for the case $\psi_1 = \psi_2 = \psi$. Combining Theorem~\ref{thm:transfer_decay} and Theorem~\ref{thm:almost_char} finishes the proof of \ref{enu:cor-RegularDirectedPointNecessaryCondition}. Part \ref{enu:corRegularDirectedPointSufficientCondition} follows directly from Theorem~\ref{thm:almost_char}.
\end{proof}
\begin{remark}
Another condition that allows to close the gap between inner and outer cone conditions is provided by the {\em cone approximation property}, which guarantees inclusions of the type $K_o(W,V,R) \subset K_i(W',V,R')$, for suitable choices of $W,R$ depending on $W',R'$. 

It is currently not clear how the cone approximation property is related to the assumptions underlying Theorem \ref{thm:almost_char_Cc} and its corollary. 
We note that both conditions are fulfilled for our chief class of examples, the shearlet dilation groups covered in Section \ref{sect:shearlet}. 
\end{remark}

We now present a near characterization of unsigned regular directed points in the case where the wavelet is real-valued and has sufficiently many vanishing moments. Note that there exist compactly supported wavelets fulfilling these conditions. Observe also that this theorem uses the same cone-related subsets for both implications.
\begin{thm}
\label{thm:almost_char_Cc}
Fix $(x,\xi)\in\mathbb{R}^{d}\times(\mathcal{O}\cap S^{d-1})$.
Let $V$ be a relatively compact subset of $\OO$, and 
assume that the dual action is $V$-microlocally
admissible in direction $\xi$, with associated parameters $\alpha_1,\alpha_2>0$. Assume that $H$ fulfills the $V$-overlap condition, as well as inequality (\ref{assump}). Then there exist constants $s_0,s_1,s_2 \ge 0$, depending only on $H$ and $V$, such that the following holds: 

Let $u\in\mathcal{S}'(\mathbb{R}^{d})$ denote a tempered distribution of order $N(u)$.
Let $\psi \in \mathcal{S}(\mathbb{R}^d)$ denote a wavelet with vanishing moments in $\mathcal{O}^c$ of order
\[
r > s_0 + s_1 N + s_2 N(u)~. 
\]
\begin{enumerate}[label=(\alph*)]
\item \label{enu:RegularDirectedPointNecessaryCondition-cc} If $(x,\pm \xi)$
is an unsigned $N$-regular directed point of $u$, then there exists a neighborhood
$U$ of $x$, some $R>0$ and a $\xi$-neighborhood $W\subset S^{d-1}$
such that the following estimate
holds: 
\[
\exists\, C >0\,\forall\, y\in U\,\forall\, h\in K_{o}(W,V,R) \cup K_{o}(-W,V,R)~:~|\WW_{\psi}u(y,h)|\le C \cdot\|h\|^{N-\alpha_1 d/2}\!\!.
\]

\item \label{enu:RegularDirectedPointSufficientCondition} Let the wavelet $\psi$ be real-valued. Assume
that $U$ is a neighborhood of $x$, and that there are $R>0$ and
a $\xi$-neighborhood $W\subset S^{d-1}$ such that 
\[\exists\, C >0\,\forall\, y\in U\,\forall\, h\in K_{o}(W,V,R) \cup K_o(-W,V,R)~:~|\WW_{\psi}u(y,h)|\le C \cdot\left\Vert h\right\Vert^{\alpha_1 N+\frac{3}{2} \alpha_1 d + \alpha_2}
\]
Then $(x,\pm \xi)$ is an unsigned $N$-regular directed point of $u$. 
\end{enumerate}
\end{thm}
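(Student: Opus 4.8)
The plan is to reduce both implications to the bandlimited characterization of Theorem~\ref{thm:almost_char}, using Theorem~\ref{thm:transfer_decay} to exchange the given finite-vanishing-moment wavelet for an auxiliary bandlimited one, and to run the argument at $\xi$ and $-\xi$ in parallel. First I would fix $s_0,s_1,s_2$: conditions (i)--(iv) of Theorem~\ref{thm:transfer_decay} are affine lower bounds on $\beta_1,\beta_2,\beta_3$ in terms of the transfer exponent and $N(u)$, and the transfer exponents that occur below, namely $N-\alpha_1 d/2$ (part (a)) and $\alpha_1 N+\frac{3}{2}\alpha_1 d+\alpha_2$ (part (b)), are themselves affine in $N$. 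Taking the $\beta_i$ minimal and inserting them into condition (v) produces a bound $r>s_0+s_1 N+s_2 N(u)$ whose coefficients depend only on $d,\dim H,\alpha_1,\alpha_2,\gamma_0,\gamma_1,\gamma_2,\ell_1$, hence only on $H$ and $V$; one then takes the larger of the two bounds coming from (a) and (b). I would also record two symmetry facts used throughout: $V$-microlocal admissibility and the $V$-overlap control condition, assumed at $\xi$, in fact hold at every direction with the same exponents (by the transport argument of Lemma~\ref{lem:cone_inclusion} together with Remark~\ref{rem-local/global-microlocality} and Lemma~\ref{lem:overlap_control}), and the reflection identities $K_{i/o}(W,-V,R)=K_{i/o}(-W,V,R)$ hold; together these let the direction $-\xi$ be treated as a mirror image of $\xi$.

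For part (b), let $\psi$ be real-valued with the assumed decay of exponent $\alpha_1 N+\frac{3}{2}\alpha_1 d+\alpha_2$ on $K_o(W,V,R)\cup K_o(-W,V,R)$. Since $K_i\subseteq K_o$, the $K_o(W,V,R)$ half gives the inner-cone estimate on $K_i(W,V,R)$ that is exactly the premise of Theorem~\ref{thm:transfer_decay} with real-valued source $\psi_1=\psi$ and transfer exponent $\alpha_1 N+\frac{3}{2}\alpha_1 d+\alpha_2$. Applying the transfer with a bandlimited target $\psi_2$, ${\rm supp}(\widehat{\psi_2})\subset V$ (admissible by Lemma~\ref{assume:proper_dual}(b)), yields the same-exponent decay of $\WW_{\psi_2}u$ on an outer set near $\xi$; this is precisely the hypothesis of Theorem~\ref{thm:almost_char}(b), so $(x,\xi)$ is an $N$-regular directed point. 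The identical argument at $-\xi$, fed by the $K_o(-W,V,R)$ half and the mirror versions of all assumptions, shows $(x,-\xi)$ is $N$-regular, whence $(x,\pm\xi)$ is an unsigned $N$-regular directed point.

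For part (a), assume $(x,\xi)$ and $(x,-\xi)$ are both $N$-regular. Now the transfer runs in the opposite direction: its target is the given $\psi$ (no real-valuedness required), while its source must be a real-valued admissible bandlimited wavelet $\psi_1$, so real-valuedness forces ${\rm supp}(\widehat{\psi_1})\subset V\cup(-V)$. I would split $\psi_1=\psi_1^++\psi_1^-$ with ${\rm supp}(\widehat{\psi_1^+})\subset V$ and ${\rm supp}(\widehat{\psi_1^-})\subset -V$. On a set $K_i(W',V,R')$ near $\xi$, Theorem~\ref{thm:almost_char}(a) at $\xi$ with support set $V$ bounds $\WW_{\psi_1^+}u$ by $C\|h\|^{N-\alpha_1 d/2}$, while the identity $K_i(W',V,R')=K_i(-W',-V,R')$ combined with Theorem~\ref{thm:almost_char}(a) at $-\xi$ with support set $-V$ bounds $\WW_{\psi_1^-}u$ by the same quantity, this being where $N$-regularity at $-\xi$ enters. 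Adding the pieces yields the premise $|\WW_{\psi_1}u(y,h)|\le C\|h\|^{N-\alpha_1 d/2}$ on $K_i(W',V,R')$, and Theorem~\ref{thm:transfer_decay} (transfer exponent $N-\alpha_1 d/2$) then gives $|\WW_\psi u(y,h)|\le C'\|h\|^{N-\alpha_1 d/2}$ on an outer set $K_o(W,V,R)$ near $\xi$. The mirror argument near $-\xi$ supplies the same decay on $K_o(-W,V,R)$; intersecting the neighborhoods and enlarging $R$ gives the stated estimate on $K_o(W,V,R)\cup K_o(-W,V,R)$.

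The main obstacle is the clash, in part (a), between the real-valuedness forced on the transfer's source wavelet (which makes $\widehat{\psi_1}$ supported symmetrically in $V\cup(-V)$) and the single-cone support hypothesis ${\rm supp}(\widehat{\psi})\subset V$ under which the necessary bandlimited bound Theorem~\ref{thm:almost_char}(a) is stated. Reconciling these is exactly what dictates the $\pm\xi$-symmetric formulation: it forces the passage to the unsigned wavefront set and to the symmetric family $K_o(W,V,R)\cup K_o(-W,V,R)$, and it is what makes the decomposition $\psi_1=\psi_1^++\psi_1^-$, the reflection identities, and the global/mirror forms of microlocal admissibility and overlap control indispensable. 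A minor point is that the transfer exponent $N-\alpha_1 d/2$ in part (a) need not be an integer; this is harmless, since the proof of Theorem~\ref{thm:transfer_decay} uses its exponent only as a real parameter. One must also track the shrinkage of the base ball from $B_\epsilon(x)$ to $B_{\epsilon/2}(x)$ at each invocation of the transfer theorem, which still leaves a neighborhood of $x$.
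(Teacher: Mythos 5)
Your proposal is correct, and its part (b) coincides with the paper's own argument (transfer from the real-valued $\psi$ to a bandlimited $\eta$ with ${\rm supp}(\widehat{\eta})\subset V$ via Theorem~\ref{thm:transfer_decay}, using $K_i\subseteq K_o$ to get the inner-cone premise, then Theorem~\ref{thm:almost_char}\ref{enu:RegularDirectedPointSufficientCondition} at $\pm\xi$), as does your affine bookkeeping for $s_0,s_1,s_2$. In part (a), however, you take a genuinely different route. The paper meets the real-valuedness requirement on the transfer's source wavelet by decomposing the \emph{distribution}: it invokes Lemma~\ref{lem:unsigned_dir_points}(c) to pass to ${\rm Re}(u)$ and ${\rm Im}(u)$, applies Theorem~\ref{thm:almost_char}\ref{enu:RegularDirectedPointNecessaryCondition} to a complex bandlimited $\eta$ with $\widehat{\eta}$ supported in $V_0\subset V$, $V_0\cap(-V_0)=\emptyset$, and exploits the identity $W_\eta v = W_{{\rm Re}(\eta)}v - i\,W_{{\rm Im}(\eta)}v$ with real-valued summands, valid for real-valued $v$, so that $|W_{{\rm Re}(\eta)}v|\le |W_\eta v|$; the transfer is then run with source ${\rm Re}(\eta)$ for ${\rm Re}(u)$ and ${\rm Im}(u)$ separately and recombined via $u={\rm Re}(u)+i\,{\rm Im}(u)$. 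You instead decompose the \emph{wavelet} in frequency, $\psi_1=\psi_1^++\psi_1^-$ with supports in $V$ and $-V$, and bound each half directly by Theorem~\ref{thm:almost_char}\ref{enu:RegularDirectedPointNecessaryCondition} --- at $\xi$ with support set $V$, and in reflected form at $-\xi$ with support set $-V$ --- before adding; this is legitimate because the necessary condition there is quantified over \emph{all} admissible wavelets with the prescribed Fourier support, and $W_{\bullet}u$ is additive in the wavelet. Your route thus sidesteps exactly the obstruction the paper flags after its proof (no pointwise estimate of $|W_{{\rm Re}(\eta)}u|$ against $|W_\eta u|$ for complex $u$) and shows the ${\rm Re}(u)/{\rm Im}(u)$ detour, which the paper describes as necessitated, is in fact avoidable; the price is that you must supply the reflected form of Theorem~\ref{thm:almost_char}\ref{enu:RegularDirectedPointNecessaryCondition}, which is not stated in the paper but does follow from your reflection identities $K_{i/o}(-W,-V,R)=K_{i/o}(W,V,R)$, since these make $(-V)$-microlocal admissibility in direction $-\xi$ literally equivalent, with the same $\alpha_1,\alpha_2$, to $V$-microlocal admissibility in direction $\xi$. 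Two small points should be made explicit: to ensure the split exists and $\psi_1\neq 0$, choose $\widehat{\psi_1^+}\in C_c^\infty(V_0)$ with $V_0\Subset V$, $V_0\cap(-V_0)=\emptyset$, and set $\psi_1=\psi_1^++\overline{\psi_1^+}$ (the same device the paper uses for its $\eta$); and your remark that the transfer exponents $N-\alpha_1 d/2$ and $\alpha_1 N+\frac{3}{2}\alpha_1 d+\alpha_2$ are non-integral is indeed harmless and is handled at precisely the same level of implicitness by the paper, which also feeds non-integer exponents into Theorem~\ref{thm:transfer_decay}.
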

\begin{proof}
Clearly, the lower bound imposed on $r$ via assumptions (i)-(v) of Theorem \ref{thm:transfer_decay} can be guaranteed to hold by a lower estimate of the type
\[
r > s_0 + s_1 N + s_2 N(u)~,
\] with suitably chosen $s_0,s_1,s_2$ depending only on $H$ and $V$. 

For the proof of (a), pick a wavelet $\eta \in \mathcal{S}(\mathbb{R}^d)$ that fulfills ${\rm supp}(\widehat{\eta}) \subset V_0 \subset V$, w.l.o.g. $V_0 \cap -V_0 = \emptyset$. The latter assumption guarantees that ${\rm Re}(\eta), {\rm Im}(\eta)$ are both nonzero, with Fourier transforms supported in $V_0 \cup -V_0 \subset \mathcal{O}$. Hence they are admissible wavelets as well, with all moments in $\mathcal{O}^c$ vanishing. 
By Lemma \ref{lem:unsigned_dir_points}, $(x,\pm \xi)$ is an unsigned $N$-regular directed point of both ${\rm Re}(u)$ and ${\rm Im}(u)$. By Theorem \ref{thm:almost_char}, there exist  a $\xi$-neighborhood $W'$ in $\mathcal{O}\cap S^{d-1}$, a neighborhood $U$ of $x$, and $R'>0$ such that 
\begin{equation}\label{eq:from-thm}
|\WW_{\eta} ({\rm Re}u) (y,h)|\le C \cdot\|h\|^{N-\alpha_1 d/2}~,~|\WW_{\eta} ({\rm Im}u) (y,h)|\le C \cdot\|h\|^{N-\alpha_1 d/2}
\end{equation}
holds for $h \in K_{i}(W',V,R') \cup K_i(-W',V,R')$ and $y\in U$.
Here, we used the fact that the dual action is $V$-microlocal admissible in any direction, in particular in the direction of $-\xi$ (see Remark~\ref{rem-local/global-microlocality}).
Writing 
\[
\eta_1 = {\rm Re}(\eta), \eta_2 = {\rm Im}(\eta), 
\] we have that $W_\eta {\rm Re}(u) = W_{\eta_1} {\rm Re}(u) - i W_{\eta_2} {\rm Re}(u)$, with $W_{\eta_1} {\rm Re}(u)$ and $ W_{\eta_2}{\rm Re}(u)$ real-valued. Hence the analog of (\ref{eq:from-thm}) also holds for $W_{\eta_i} {\rm Re}(u)$, for $i=1,2$.

Now Theorem \ref{thm:transfer_decay}, applied to $\eta_1$, allows to choose $W,R$ depending on $W',R'$ in such a way that the same type of wavelet coefficient decay holds for $\WW_\psi {\rm Re}(u)$ inside $K_o(W,V,R) \cup K_o(-W,V,R)$. The analogous reasoning can be applied to  $\WW_\psi {\rm Im}(u)$, possibly at the expense of further restricting  $W,R$. But then $u = {\rm Re}(u) + i {\rm Im}(u)$ implies the same decay for $\WW_\psi u$.

For the proof of part (b) we pick a wavelet $\eta \in \mathcal{S}(\mathbb{R}^d)$ that fulfills ${\rm supp}(\widehat{\eta}) \subset V\subseteq \mathcal{O}$. Then $\eta$ has vanishing moments of all orders on $\mathcal{O}$. Since $\psi$ is real-valued, we can use Theorem \ref{thm:transfer_decay} to transfer this decay to $\WW_\eta(u)$ for all $h \in K_o(W,V,R) \cup K_o(-W,V,R)$. Thus, Theorem \ref{thm:almost_char} provides that both $(x,\xi)$ and $(x,-\xi)$ are $N$-regular directed points of $u$.
\end{proof}

Note that the slight detour via ${\rm Re}(u)$ and ${\rm Im}(u)$ is necessitated by the fact that for complex-valued wavelet $\eta$ and distribution $u$, there is no simple estimate of $|W_{{\rm Re}(\psi)} u|$, $|W_{{\rm Im}(\psi)} u|$ against $|W_\psi u|$ available, which would allow the transfer of decay statements from one side to the other. The same observation applies to $|W_\psi ({\rm Re}(u))|$ and $|W_\psi ({\rm Im}(u))|$. 

As an easy application of the theorem, we note an adaptation of the main result from \cite{FeFuVo}, from bandlimited Schwartz wavelets to real Schwartz wavelets with vanishing moments of arbitrary order. 
\begin{cor} \label{cor:char_vm_infty}
Fix $(x,\xi)\in\mathbb{R}^{d}\times(\mathcal{O}\cap S^{d-1})$.
Let $V$ be a relatively compact subset of $\OO$, and 
assume that the dual action is $V$-microlocally
admissible in direction $\xi$. Further assume that $H$ fulfills the $V$-overlap condition, as well as inequality (\ref{assump}). 
Let $u\in\mathcal{S}'(\mathbb{R}^{d})$, and
let $\psi \in \mathcal{S}(\mathbb{R}^d)$ denote a real-valued wavelet with vanishing moments in $\mathcal{O}^c$ of arbitrary order. Then the following are equivalent:
\begin{enumerate}
\item[(a)] $(x,\pm \xi)$ is an unsigned regular directed point of $u$.
\item[(b)] There exists a neighborhood
$U$ of $x$, some $R>0$ and a $\xi$-neighborhood $W\subset S^{d-1}$
such that the following  holds: 
\[
\forall N > 0 \exists\, C >0\,\forall\, y\in U\,\forall\, h\in K_{o}(W,V,R) \cup K_{o}(-W,V,R)~:~|\WW_{\psi}u(y,h)|\le C \cdot\|h\|^{N }\!\!.
\]
\end{enumerate}
\end{cor}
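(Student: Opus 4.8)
The plan is to reduce the statement to the finite-order near-characterization of Theorem~\ref{thm:almost_char_Cc}, applied once for each order $N$, and then to assemble the resulting statements into a single one. The key starting observation is that a real-valued wavelet $\psi$ with vanishing moments of \emph{arbitrary} order automatically meets the hypothesis $r>s_0+s_1 N+s_2 N(u)$ of Theorem~\ref{thm:almost_char_Cc} for every $N$ simultaneously, so that theorem is available for all $N$ with one and the same $\psi$, under the standing assumptions (microlocal admissibility in direction $\xi$, the $V$-overlap condition, and inequality~\eqref{assump}). Both implications are then obtained order by order; the only genuine work lies in guaranteeing that a \emph{single} triple $(U,W,R)$ can be used across all orders.

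For the direction (b)$\Rightarrow$(a), I would fix the neighborhood $U$, the radius $R$, and the cone $W$ supplied by (b). For a given $N$, the hypothesis of (b) in particular yields the decay $|\WW_\psi u(y,h)|\le C\|h\|^{\alpha_1 N+\frac{3}{2}\alpha_1 d+\alpha_2}$ on $K_o(W,V,R)\cup K_o(-W,V,R)$, which is precisely the hypothesis of the sufficient direction (b) of Theorem~\ref{thm:almost_char_Cc}. Hence $(x,\pm\xi)$ is an unsigned $N$-regular directed point of $u$. Since $U,W,R$ did not depend on $N$, this holds for every $N$, which is exactly the assertion that $(x,\pm\xi)$ is an unsigned regular directed point. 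This direction needs no uniformity argument.

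For the direction (a)$\Rightarrow$(b), I would first pass to $\mathrm{Re}(u)$ and $\mathrm{Im}(u)$: by Lemma~\ref{lem:unsigned_dir_points}, $(x,\pm\xi)$ is an unsigned regular directed point of each of them. Choosing an auxiliary bandlimited wavelet $\eta$ with $\mathrm{supp}(\widehat{\eta})\subset V_0\subset V$ and $V_0\cap(-V_0)=\emptyset$ (so that $\mathrm{Re}(\eta),\mathrm{Im}(\eta)$ are again admissible with all moments vanishing), I would invoke the \emph{infinite-order} characterization for bandlimited wavelets from \cite{FeFuVo} (the decay-of-arbitrary-order counterpart of Theorem~\ref{thm:almost_char}) to produce a \emph{single} inner cone $(W',R')$ and neighborhood $U$ carrying decay of all orders for $\WW_\eta(\mathrm{Re}\,u)$ and $\WW_\eta(\mathrm{Im}\,u)$ on $K_i(W',V,R')\cup K_i(-W',V,R')$; splitting $\eta$ into its real and imaginary parts transfers this to the real-valued pieces, and global microlocal admissibility (valid in direction $-\xi$ as well) justifies the $\pm W'$ symmetrization. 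The decisive point is then that Theorem~\ref{thm:transfer_decay}, applied to the real-valued $\psi$, produces its output cone $(W,R)$ through the overlap control condition (Definition~\ref{defn:overlap_control}), which by construction delivers one cone valid for all exponents $L$ at once; consequently, transferring the order-$N$ estimate for each $N$ lands in the \emph{same} set $K_o(W,V,R)$. Recombining via $u=\mathrm{Re}(u)+i\,\mathrm{Im}(u)$ and symmetrizing in $\pm W$ then yields (b).

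The main obstacle is precisely this uniformity in $N$. A naive application of the necessary direction (a) of Theorem~\ref{thm:almost_char_Cc} separately for each $N$ produces cones $W_N$ and radii $R_N$ that may shrink and grow with $N$, and an infinite family of such cones cannot be intersected to an open cone. The resolution rests on two ingredients: the single-cone nature of the infinite-order bandlimited characterization of \cite{FeFuVo} on the input side, and the fact, visible in the proof of Theorem~\ref{thm:transfer_decay}, that the cone delivered by the overlap control condition is selected independently of the target decay order $L$. Checking that these two uniformities compose correctly, so that one triple $(U,W,R)$ indeed serves every order $N$, is the crux of the argument; the remaining estimates are immediate specializations of the order-by-order results already established.
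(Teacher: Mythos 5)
Your proof is correct and takes essentially the route the paper intends: Corollary \ref{cor:char_vm_infty} is stated without a written proof, as ``an easy application'' of Theorem \ref{thm:almost_char_Cc} applied order by order, which is exactly your reduction (with (b)$\Rightarrow$(a) needing no uniformity, since the hypothesis $r>s_0+s_1N+s_2N(u)$ holds for every $N$ when $\psi$ has vanishing moments of arbitrary order). Your handling of the one point the paper leaves implicit --- that a single triple $(U,W,R)$ serves all $N$ in (a)$\Rightarrow$(b), obtained by feeding the single-cone infinite-order bandlimited estimate from \cite{FeFuVo} into Theorem \ref{thm:transfer_decay} and observing that the output cone supplied by the overlap control condition (Definition \ref{defn:overlap_control}) is, by its very formulation, independent of the decay order $L$ --- is accurate and is precisely what makes the ``easy application'' rigorous.
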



\section{Shearlet dilation groups}

\label{sect:shearlet}

Given that the lists of (partly implicit) assumptions of the Theorems \ref{thm:almost_char} and \ref{thm:almost_char_Cc} are somewhat long and technical, it is legitimate to ask whether there are examples fulfilling all of them. It is the main purpose of this section to demonstrate that there exists a fairly large example class for which all the assumptions can be verified, with reasonably explicit expressions for the involved constants. 

We let $T(d,\Bbb{R})$ denote the matrix group of upper triangular matrices of size $d$ with ones on the diagonal. Elements of $T(d,\Bbb{R})$ are called {\em unipotent}. $\mathfrak{t}(d,\mathbb{R})$ refers to the Lie algebra of $T(d,\mathbb{R})$, which consists of the upper triangular matrices with zero diagonal.

\begin{defn} \label{defn:sdg}
Let $H < {\rm GL}(d,\Bbb{R})$ denote an irreducibly admissible dilation group. $H$ is called a {\em generalized shearlet dilation group}, if there exist two closed subgroups $S, D < H$ with the following properties:
\begin{enumerate}
\item[(i)] $S$ is a connected abelian Lie subgroup of $T(d,\Bbb{R})$;
\item[(ii)] $D = \{ \exp(r Y) : r \in \Bbb{R} \}$ is a one-parameter group, where $Y$ is a diagonal matrix;
\item[(iii)] Every $h \in H$ can be written uniquely as $h = \pm d s$, with $d \in D$ and $s \in S$.
\end{enumerate}
$S$ is called the {\em shearing subgroup} of $H$, and $D$ is called the {\em scaling subgroup} of $H$.
\end{defn}

\begin{remark}
The two-dimensional shearlet groups are given by 
\[
H_c = \left\{ \pm \begin{pmatrix} a & t \\ 0 & a^c \end{pmatrix} : a>0, t \in \mathbb{R} \right\} < {\rm GL}(2,\mathbb{R})~.  
\] Here $c \in \mathbb{R}$ can be chosen arbitrarily. 
\end{remark}

\begin{remark} \label{rem:concrete_shearlet_groups}
For dimension $d \ge 3$, emphasis of research so far has been focused on {\em standard} and {\em Toeplitz} shearlet groups, which we define below. We remark that with growing dimension $d$, the number of possible choices distinct from these two classes of groups grows considerably.
\begin{enumerate}
\item[(i)]  For $\lambda=(\lambda_2, \ldots, \lambda_{d})\in \mathbb{R}^{d-1}$, we define the \textit{standard shearlet group in $d$ dimensions} $H^\lambda$ as the set
\begin{align*}
\left\{ \epsilon\, \mathrm{diag}\left(a, a^{\lambda_2},\ldots, a^{\lambda_{d}}\right)
\begin{pmatrix}
1 & t_1 & \ldots & t_{d-1} \\
 & 1 & 0\ldots & 0			\\
 &  & \ddots & 	0	\\
 &  & & 1
\end{pmatrix}
|
\begin{array}{l}
a>0, \\
t_i\in \mathbb{R},\\
\epsilon\in \left\{ \pm 1 \right\}
\end{array}
\right\}.
\end{align*}
\item[(ii)] For  $\delta\in \mathbb{R}$, we define the \textit{Toeplitz shearlet group in $d$ dimensions} $H^\delta$ as the set
\begin{align*}
\left\{\epsilon\,  \mathrm{diag}\left(a, a^{1-\delta},\ldots, a^{1-(d-1)\delta}\right)\cdot T(1, t_1, \ldots, t_{d-1})
|
\begin{array}{l}
a>0,\\
t_i\in {\mathbb R},\\
\epsilon\in \{\pm 1\}
\end{array}
\right\},
\end{align*}
where the matrix $T(1, t_1, \ldots, t_{d-1})$ is defined by
\begin{align*}
T(1, t_1, \ldots, t_{d-1}):=
\begin{pmatrix}
1 & t_1 & t_2 & \ldots & t_{d-2} & t_{d-1} \\
  & 1	& t_1 & t_2 & \ldots & t_{d-2}\\
  && \ddots & \ddots & \ddots & \vdots \\
  &&& 1 & t_1 & t_2\\
  &&&& 1 & t_1\\
  &&&&& 1
\end{pmatrix}.
\end{align*}
\end{enumerate}
\end{remark}

Below we will establish that the results of the previous sections, in particular Theorems \ref{thm:almost_char} and \ref{thm:almost_char_Cc}, are applicable to shearlet groups from a large example class, including the above-listed families, provided that the scaling subgroup fulfills an additional condition. Thankfully, the majority of the necessary technical conditions have already been verified in previous papers. The following lemma already guarantees the applicability of Theorem \ref{thm:almost_char}. 

\begin{lem} \label{lem:shear_cone_approx}
Let $H$ denote a shearlet dilation group with scaling subgroup $D = \exp(\mathbb{R} Y)$, where w.l.o.g. $Y = {\rm diag}(1,\lambda_2,\ldots,\lambda_d)$. Let 
\[
\lambda_{\max} = \max \{ \lambda_i : 2 \le i \le d \}~,~\lambda_{\min} = \min \{ \lambda_i : 2 \le i \le d \}~.
\]
Assume that $0 < \lambda_{\min} \le \lambda_{\max} < 1$. Then
\begin{enumerate}
    \item[(a)] There exists an open $\emptyset\neq V \Subset H$ such that $H$ fulfills the $V$-cone approximation property; that is, for every $\xi\in {\mathcal O}\cap S^{d-1}$ we have the following: for all $\xi-$neighborhoods $W\subseteq S^{d-1}$ and all $R>0$, there exists a $\xi$-neighborhood $W'\subseteq S^{d-1}$ and $R'>0$ satisfying $K_o(W',V,R')\subseteq K_i(W,V,R).$ 
    \item[(b)] $H$ is microlocally admissible, with 
    \begin{equation} \label{eqn:alpha_i}
    \alpha_1 = \frac{1}{\lambda_{\min}} ~,~ \alpha_2 = \frac{d}{\lambda_{\min}} + \epsilon~,
    \end{equation}
    where $\epsilon>0$ is arbitrary. 
\end{enumerate}
\end{lem}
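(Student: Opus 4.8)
The plan is to fix the standard data attached to a generalized shearlet group: the reference point $\xi_0 = e_1$, the open orbit $\mathcal{O} = \{\xi \in \mathbb{R}^d : \xi_1 \neq 0\}$, and the decomposition $h = \epsilon\, d(a)\, s$ with $\epsilon \in \{\pm 1\}$, $d(a) = \mathrm{diag}(a, a^{\lambda_2}, \ldots, a^{\lambda_d})$ for $a = e^r > 0$, and $s \in S$ unipotent upper triangular. Since $s^{-T}$ is then lower unipotent and fixes the first coordinate, one has $(h^{-T}\xi')_1 = \epsilon\, a^{-1}\xi'_1$ and $(h^{-T}\xi')_i = \epsilon\, a^{-\lambda_i}(s^{-T}\xi')_i$ for $i \ge 2$. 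By Lemma~\ref{lem:cone_inclusion} and Remark~\ref{rem-local/global-microlocality} both assertions are invariant under moving along the orbit, so it suffices to verify everything at $\xi_0 = e_1$; I would take $V$ to be a small relatively compact neighborhood of $e_1$ on which $\xi'_1$ is bounded away from $0$.

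For part (a) I would first isolate the geometric mechanism and then reduce to a single direction. For $h$ in an outer cone set, $d(a)^{-1}$ expands the bounded set $s^{-T}V$ by the factor $a^{-1}$ in the $e_1$-direction but only by the strictly smaller factors $a^{-\lambda_i}$ (using $\lambda_i < 1$) transversally; hence, as $a \to 0$, the image $h^{-T}V$ becomes a thin sliver at large radius whose angular width about $e_1$ tends to $0$. Consequently, if $h^{-T}V$ merely meets a sufficiently narrow far cone $C(W',R')$, then it is already contained in the prescribed wider far cone $C(W,R)$, which is exactly the inclusion $K_o(W',V,R') \subseteq K_i(W,V,R)$. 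I would make this quantitative at $\xi_0 = e_1$ and then invoke the single-direction reduction for the cone approximation property, in analogy with \cite[Lemma 4.5]{FeFuVo}; this simultaneously pins down an admissible choice of $V$.

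For the norm estimate in part (b) I would analyze $K_o(W_0,V,R_0)$ directly. Requiring $h^{-T}\xi'$ to lie in a narrow cone about $e_1$ forces the first coordinate $a^{-1}\xi'_1$ to dominate, so that $|h^{-T}\xi'| \asymp a^{-1}|\xi'_1|$; together with the radius condition $|h^{-T}\xi'| > R_0$ this gives $a < |\xi'_1|/R_0$, and choosing $R_0$ large makes $a$ bounded above by some $a_0 < 1$. The same angular condition confines each shearing parameter to a range of size $O(a^{-(1-\lambda_i)})$ about a bounded quantity, which keeps $d(a)^{\pm 1} s^{\mp 1}$ comparable in operator norm to $d(a)^{\pm 1}$. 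For $a < 1$ one reads off $\|d(a)\| = a^{\lambda_{\min}}$ and $\|d(a)^{-1}\| = a^{-1}$, whence $\|h\| \asymp a^{\lambda_{\min}}$ and $\|h^{-1}\| \asymp a^{-1}$ on $K_o(W_0,V,R_0)$. This yields $\|h^{-1}\| \le C\|h\|^{-1/\lambda_{\min}}$, i.e.\ condition~\ref{defn:micro_regular}\ref{enu:NormOfInverseEstimateOnKo} with $\alpha_1 = 1/\lambda_{\min}$, together with $\sup_{K_o}\|h\| \le a_0^{\lambda_{\min}} < \infty$, as predicted by Lemma~\ref{lem:related_to_assumptions}.

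It remains to secure the integrability condition~\ref{defn:micro_regular}\ref{enu:NormIntegrability} with the explicit exponent $\alpha_2 = d/\lambda_{\min} + \epsilon$, and this is the step I expect to be the main obstacle. The quick existence route is that part (a) supplies the cone approximation property, so \cite[Lemma 6.1]{FeFuVo} converts the norm estimate just obtained into integrability automatically; the real work is extracting the explicit value of $\alpha_2$. For this I would compute the left Haar measure in the $(a,s)$-coordinates — the scaling factor contributes $\mathrm{d}a/a$ and $S$ contributes its Lebesgue measure — and combine it with the $a$-dependent volume of the slice of $K_o(W_0,V,R_0)$ at scale $a$, which grows polynomially in $a^{-1}$ according to the conjugation action of $d(a)$ on $S$. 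Since $\|h\|^{\alpha_2} = a^{\lambda_{\min}\alpha_2}$ vanishes as $a \to 0$, the resulting one-dimensional integral $\int_0^{a_0} a^{\lambda_{\min}\alpha_2 - \tau - 1}\,\mathrm{d}a$ converges precisely when $\lambda_{\min}\alpha_2$ exceeds the exponent $\tau$ governing the shearing-region growth. The delicate point, and the reason for the safe over-estimate $d/\lambda_{\min}$, is to bound $\tau$ uniformly across all admissible shearing subgroups $S$ (not merely the standard and Toeplitz families) by a quantity not exceeding $d$; granting this, any $\alpha_2 > d/\lambda_{\min}$, in particular $\alpha_2 = d/\lambda_{\min} + \epsilon$, suffices.
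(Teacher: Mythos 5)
Your proposal is correct in substance, but it takes a genuinely different route from the paper: the paper's proof of this lemma is essentially a citation, taking part (a) and the exponent $\alpha_1=1/\lambda_{\min}$ from Propositions 30 and 32 of \cite{AlDaDeMDeVFu}, and obtaining $\alpha_2$ from the abstract implication in \cite[Lemmas 2.8 and 4.7]{FeFuVo}, which converts the norm estimate plus the cone approximation property into integrability under the condition $\alpha_2 > d\alpha_1$. You instead re-derive everything by direct computation in shear--scale coordinates, and your computations closely parallel what the paper does anyway, later, when verifying the overlap control condition (Lemmas \ref{lem:cone_t_a}, \ref{lem:est_norm_prod} and \ref{lem:sh_gr_Haar}): membership in $K_o$ forces $a\lesssim \tau_2/R_0<1$, the norms satisfy $\|h\|\asymp a^{\lambda_{\min}}$ and $\|h^{-1}\|\asymp a^{-1}$, and $\alpha_1=1/\lambda_{\min}$ follows. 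Two bookkeeping remarks. First, you write $h=\epsilon\, d(a)s$ but then use the formulas $(h^{-T}\xi')_1=\epsilon a^{-1}\xi'_1$ and $(h^{-T}\xi')_i=\epsilon a^{-\lambda_i}(s^{-T}\xi')_i$, which belong to the decomposition $h=\epsilon\, s\, d(a)$; this is harmless, since both decompositions exist by Lemma \ref{lem:CanonicalBasis}(d), but it explains a difference from the paper's setup: in your (scaling-first) coordinates the left Haar measure is the uncorrected $d\sigma\, da/a$ and the shear slice of $K_o$ grows like $\prod_i a^{-(1-\lambda_i)}$, whereas in the paper's (shear-first) coordinates the shear stays \emph{bounded} on $K_o$ (Lemma \ref{lem:cone_t_a}) and the same power of $a$ reappears as the modular factor $|a|^{-(d-\mathrm{tr}(Y))}$ in Lemma \ref{lem:sh_gr_Haar}; the two bookkeepings give the identical convergence threshold, and your claim that $d(a)^{\pm 1}s^{\mp 1}$ stays comparable to $d(a)^{\pm 1}$ despite the growing shear range is justified precisely by the conjugation relation of Lemma \ref{lem:sh_group_coords}(c). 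Second, the step you single out as the main obstacle --- bounding the shear-growth exponent $\tau$ uniformly over all admissible shearing subgroups --- is in fact automatic: $\dim S=d-1$ always (Lemma \ref{lem:CanonicalBasis}(a)) and the conjugation weights $a^{1-\lambda_i}$ depend only on $Y$, not on the algebra structure of $\mathfrak{s}$, so $\tau=\sum_{i=2}^d(1-\lambda_i)<d-1<d$ in all cases. Your direct route therefore even yields the sharper admissible range $\alpha_2>\bigl(d-1-\sum_{i\ge 2}\lambda_i\bigr)/\lambda_{\min}$, exhibiting the stated value $\alpha_2=d/\lambda_{\min}+\epsilon$ as a safe over-estimate, consistent with the cruder requirement $\alpha_2>d\alpha_1$ that the paper inherits from \cite{FeFuVo}. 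In short: the paper's proof is short and leans on verified prior results, while yours is self-contained, makes the provenance of the constants transparent, and shows where they could be improved.
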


\begin{proof}
The cone approximation property is shown in Proposition 30 of \cite{AlDaDeMDeVFu}, and microlocal admissibility is stated in Proposition 32 of that paper. The constants are not explicitly stated in this result, but they can be found in the proof: Part (a) of Definition \ref{defn:micro_regular} is shown to hold with $\alpha_1 = \frac{1}{\lambda_{\min}}$. For the existence of $\alpha_2$, the reader is referred to Lemma 4.7 of \cite{FeFuVo}, which appeals to the existence of $\alpha_1$ and the cone approximation property. For an actual formula for $\alpha_2$, the reader should consult the proofs of \cite[Lemma 2.8 and Lemma 4.7]{FeFuVo} to find the condition $\alpha_2 > d \alpha_1$.
\end{proof}

We next establish the condition allowing the application of the decay estimates from Proposition \ref{prop:decay-cross-kernel}. Before we formulate the statement, we cite a statement concerning the structure of the shearing subgroup and its Lie algebra. 

\begin{lem}[\cite{AlDaDeMDeVFu} Lemma 5. and Lemma 6.]\label{lem:CanonicalBasis}
Let $S$ be the shearing subgroup of a generalized shearlet dilation group $H\subset \mathrm{GL}({\mathbb R}^d)$. Then the following statements hold:
\begin{enumerate}
\item[(a)] There exists a unique basis $X_2,\ldots, X_d$ of the Lie algebra $\mathfrak{s}$ of $S$ with $X_i^Te_1=e_i$ for $2\leq i \leq d$, called the canonical basis of $\mathfrak{s}$.
\item[(b)] We have $S=\{I_d + X |\, X\in \mathfrak{s} \}$.
\item[(c)] Let $\mathfrak{s}_k = {\rm span}\{X_j : j \ge k \}$, for $k \in 2, \ldots, d$.  These are associative matrix algebras satisfying $\mathfrak{s}_k  \mathfrak{s}_\ell \subset \mathfrak{s}_{\max{\{k,\ell\}}+1}$, where we write  $\mathfrak{s}_m = \{ 0 \}$ for $m >d$.
\item[(d)] $H$ is the inner semidirect product of the normal subgroup $S$ with the closed subgroup $D \cup -D$. 
\end{enumerate} 
\end{lem}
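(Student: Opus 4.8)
The plan is to hang the whole lemma on the linear map that reads off the first row of a shearing matrix, and then to reduce (b) and (c) to closure of the shearing algebra under multiplication. Write $\mathfrak{s}\subset\mathfrak{t}(d,\mathbb{R})$ for the Lie algebra of $S$ and consider $\Phi\colon\mathfrak{s}\to{\rm span}\{e_2,\dots,e_d\}$, $\Phi(X)=X^{T}e_1$. Normalising coordinates so that $e_1\in\mathcal{O}$ and $Ye_1=e_1$ (legitimate by Lemma~\ref{assume:proper_dual} and the shearlet structure), the tangent space of the open orbit $\mathcal{O}=H^{T}e_1$ at $e_1$ equals $\mathbb{R}e_1+{\rm im}(\Phi)$, so openness forces $\Phi$ to be surjective. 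For injectivity I would note that $h^{T}e_1=\pm e^{r}\bigl(e_1+X^{T}e_1\bigr)$ for $h=\pm\exp(rY)(I+X)$, whence the stabiliser of $e_1$ in $S$ is the connected unipotent group $\exp(\ker\Phi)$; as $H$ is irreducibly admissible its stabilisers are compact, and a nontrivial connected unipotent group is never compact, so $\ker\Phi=0$. Thus $\Phi$ is an isomorphism and $X_i:=\Phi^{-1}(e_i)$ is the unique canonical basis, proving (a).

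For (b) and (c) the key reduction is that for $X,Y\in\mathfrak{s}$ the product $XY$ is strictly upper triangular and, since $\mathfrak{s}$ is abelian, commutes with every $Z\in\mathfrak{s}$ (because $XYZ=XZY=ZXY$). Hence closure of $\mathfrak{s}$ under multiplication is equivalent to $\mathfrak{s}$ being \emph{self-centralising} in $\mathfrak{t}(d,\mathbb{R})$, and this is the step I expect to be the main obstacle. My approach is: given a strictly upper triangular $W$ commuting with $\mathfrak{s}$, subtract the unique $Z\in\mathfrak{s}$ with $\Phi(Z)=\Phi(W)$ and set $W_0=W-Z$, which has vanishing first row and still commutes with $\mathfrak{s}$; one then propagates the vanishing downward, since commuting with $X_2$ gives $[\,W_0,X_2\,]_{1j}=-(W_0)_{2j}=0$, killing the second row, and so on. The subtle part is sustaining this induction past the first row, where one needs control of the lower rows of the $X_i$; here I would invoke the ${\rm ad}(Y)$-grading — available because $D$ normalises $S$, so $\mathfrak{s}$ splits into ${\rm ad}(Y)$-eigenspaces — to match weights and exclude the stray couplings (the same mechanism by which the abelian condition forces superdiagonal interactions to vanish). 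Once $W_0=0$, closure holds and $\mathfrak{s}$ is a commutative associative nilpotent matrix algebra.

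It then remains to harvest the three statements. For (b), on a nilpotent associative subalgebra the maps $X\mapsto\exp(X)$ and $A\mapsto\log(I+A)$ are mutually inverse polynomial bijections between $\mathfrak{s}$ and $I+\mathfrak{s}$, and $I+\mathfrak{s}$ is a group; as $S$ is the connected (simply connected unipotent) group with Lie algebra $\mathfrak{s}$, we get $S=\exp(\mathfrak{s})=I+\mathfrak{s}$. For (c), identify $\mathfrak{s}_k=\Phi^{-1}({\rm span}\{e_k,\dots,e_d\})$, i.e.\ the elements whose first row is supported in columns $\ge k$; for $X\in\mathfrak{s}_k$, $Y\in\mathfrak{s}_\ell$ the expansion $(XY)_{1j}=\sum_m X_{1m}Y_{mj}$ vanishes for $j\le k$, while $XY=YX$ gives the same for $j\le\ell$, so the product — now genuinely in $\mathfrak{s}$ by closure — lies in $\mathfrak{s}_{\max\{k,\ell\}+1}$. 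For (d), $D\cup -D=\{\pm\exp(rY)\}$ is a closed subgroup meeting $S$ only in $I$, the factorisation $H=(D\cup -D)S$ is exactly the uniqueness in Definition~\ref{defn:sdg}(iii), and ${\rm Ad}(\exp rY)\mathfrak{s}=\mathfrak{s}$ (equivalently $\exp(rY)S\exp(-rY)=S$ via (b)) yields normality of $S$, exhibiting $H$ as the claimed inner semidirect product.
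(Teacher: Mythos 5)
The paper itself offers no proof of this lemma --- it is imported verbatim from \cite{AlDaDeMDeVFu} --- so your argument must stand on its own, and most of it does. The one genuine gap is the opening normalisation ``$e_1\in\mathcal{O}$ and $Ye_1=e_1$'', which you declare legitimate by Lemma~\ref{assume:proper_dual} and the shearlet structure. Lemma~\ref{assume:proper_dual} only provides the existence of an open conull orbit; it does not locate $e_1$ inside it, and you cannot conjugate an arbitrary orbit point to $e_1$ without destroying the triangular form of $S$ and the diagonality of $Y$ required by Definition~\ref{defn:sdg}. This matters because both halves of your proof of (a) take place at $e_1$: surjectivity of $\Phi$ uses openness of the orbit \emph{at} $e_1$, and injectivity uses compactness of the stabiliser, which the admissibility characterisation guarantees only at points \emph{of} $\mathcal{O}$. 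Indeed, $e_1\in\mathcal{O}$ is equivalent to ($\lambda_1\neq 0$ and $\Phi$ surjective), so as written the step is circular. The gap is fillable: pick any $\xi\in\mathcal{O}$; since $X^{T}$ is strictly lower triangular for $X\in\mathfrak{s}$, one has $\mathfrak{s}^{T}\xi'\subset\{0\}\times\mathbb{R}^{d-1}$ at every $\xi'\in\mathcal{O}$, and fullness of the tangent space forces $\lambda_1\xi_1'\neq0$ and surjectivity of $X\mapsto X^{T}\xi'$ onto the hyperplane direction; hence $S^{T}\xi$ is open in the affine hyperplane $\{\eta_1=\xi_1\}$, and it is also closed there because orbits of unipotent algebraic groups acting algebraically are closed (Kostant--Rosenlicht); connectedness of the hyperplane gives $S^{T}\xi=\{\eta_1=\xi_1\}\ni\xi_1 e_1$, and Lemma~\ref{assume:proper_dual}(a) then yields $e_1\in\mathcal{O}$. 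A smaller circularity in the same part: you parametrise $s=I+X$ before (b) is available; run the stabiliser computation with $s=\exp X$ instead, noting $\exp(X^{T})e_1-e_1=f(X^{T})\,X^{T}e_1$ with $f(A)=\sum_{k\geq0}A^{k}/(k+1)!$ unipotent, hence invertible, so $\mathrm{Stab}_S(e_1)=\exp(\ker\Phi)$ exactly as you claim, and noncompactness of a nontrivial unipotent one-parameter group finishes injectivity.

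By contrast, the step you flag as ``the main obstacle'' --- sustaining the row-killing induction past the first row --- closes immediately, with no $\mathrm{ad}(Y)$-grading and no control of the lower rows of the $X_i$. If $W_0$ commutes with $\mathfrak{s}$ and has vanishing first row, then for \emph{every} $i$ and $j$, $[W_0,X_i]_{1j}=(W_0X_i)_{1j}-(X_iW_0)_{1j}=0-(W_0)_{ij}$, because the first sum uses only the (zero) first row of $W_0$ and $(X_i)_{1m}=\delta_{im}$; so commuting with $X_i$ annihilates the entire $i$-th row at once, and $W_0=0$ in one stroke. Granting this, your reduction is correct: $XY$ commutes with the abelian $\mathfrak{s}$ via $XYZ=XZY=ZXY$, hence lies in the centraliser $=\mathfrak{s}$, making $\mathfrak{s}$ a commutative associative nilpotent algebra; (b) then follows from the mutually inverse polynomial maps $\exp$ and $\log$ between $\mathfrak{s}$ and $I+\mathfrak{s}$ together with connectedness of $S$; your first-row-support argument for (c) is correct (with the boundary case $\max\{k,\ell\}+1>d$ handled by injectivity of $\Phi$, which forces $XY=0$ when its first row vanishes); and for (d) the only loose assertion, $\mathrm{Ad}(\exp rY)\mathfrak{s}=\mathfrak{s}$, is a one-liner: $dsd^{-1}$ is unipotent upper triangular and lies in $H$, so comparing diagonals in the unique decomposition $\pm d's'$ forces $dsd^{-1}\in S$, giving normality directly.
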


Note that since $\mathfrak{s}$ consists of strictly upper triangular matrices, it is nilpotent. We let $n(\mathfrak{s})$ denote the {\em nilpotency degree of $\mathfrak{s}$}, 
i.e. 
\[
n(\mathfrak{s}) = \min \{ k \in \mathbb{N} : X^k = 0 \ \forall X \in \mathfrak{s} \}~.  
\] As will be seen more clearly in our computations below, $n(\mathfrak{s})$ enters quite naturally in computations with inverse matrices in $H$.

\begin{lem} \label{lem:est_AH}
Let $H$ denote a shearlet dilation group with scaling subgroup $D = \exp(\mathbb{R} Y)$, where w.l.o.g. $Y = {\rm diag}(1,\lambda_2,\ldots,\lambda_d)$. Let $\mathfrak{s}$ denote the Lie algebra of the shearing subgroup, and $n(\mathfrak{s})$ its nilpotency degree. Assume that $\lambda_{\min},\lambda_{\max}$, defined in Lemma \ref{lem:shear_cone_approx}, fulfill $0 < \lambda_{\min} \le \lambda_{\max} < 1$. Then 
\[
 \|h^{\pm1}\|A_H(h)^{\ell_1}\leq 1
\]
holds for all $h \in H$, with
\begin{equation} \label{eqn:ell_1}
\ell_1 = n(\mathfrak{s})+1~.
\end{equation}
\end{lem}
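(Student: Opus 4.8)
The plan is to exploit the explicit matrix structure of a generalized shearlet group to compute $A_H$ in closed form and then to bound $\|h\|$ and $\|h^{-1}\|$ directly against powers of $A_H(h)^{-1}$. I would fix the base point $\xi_0 = e_1$, which is legitimate since for shearlet groups the open dual orbit is $\mathcal{O} = H^T e_1 = \{\xi \in \RR^d : \xi_1 \neq 0\}$, with complement the hyperplane $\mathcal{O}^c = \{\xi_1 = 0\}$. Writing $h = \pm d s$ with $d = \exp(rY)$, $Y = \mathrm{diag}(1,\lambda_2,\ldots,\lambda_d)$, $a := e^r > 0$, and $s = I + X$, $X = \sum_{i=2}^d t_i X_i$ in the canonical basis of Lemma \ref{lem:CanonicalBasis} (so $X_i^T e_1 = e_i$), one gets $h^T e_1 = \pm a\,(1, t_2, \ldots, t_d)$. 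Since $\mathrm{dist}(\xi,\mathcal{O}^c) = |\xi_1|$ for the hyperplane $\mathcal{O}^c$, formula \eqref{eqn:A_euclid} evaluates, with $|t| := (\sum_{i \ge 2} t_i^2)^{1/2}$, to
\[
A_H(h) = \min\!\left( \frac{a}{1 + a|t|},\ \frac{1}{1 + a\sqrt{1+|t|^2}} \right),
\]
so that $A_H(h)^{-1} = \max\!\big( \tfrac{1}{a} + |t|,\ 1 + a\sqrt{1+|t|^2}\big)$; in particular each of $1,\ a,\ |t|,\ a|t|,\ \tfrac1a$ is dominated by $A_H(h)^{-1}$.

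Next I would record the matrix norm estimates. Because $d$ is diagonal with exponents in $[\lambda_{\min},1] \subset (0,1]$, we have $\|d\| \le \max(1,a)$ and $\|d^{-1}\| \le \max(1,a^{-1})$. The unipotent factor $s = I + X$ is affine in $t$, so $\|s\| \le C(1+|t|)$; the decisive asymmetry lies in the inverse, where nilpotency of $\mathfrak{s}$ of degree $n := n(\mathfrak{s})$ truncates the Neumann series $s^{-1} = \sum_{k=0}^{n-1}(-X)^k$, giving $\|s^{-1}\| \le C(1+|t|)^{n-1}$. Submultiplicativity then yields $\|h\| \le C(1+a)(1+|t|)$ and $\|h^{-1}\| \le C(1+|t|)^{n-1}\max(1,a^{-1})$.

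Finally I would combine these. For $\|h\|$, since $1,a,|t|,a|t|$ are all at most $A_H(h)^{-1}$, one has $(1+a)(1+|t|) \le 4\,A_H(h)^{-1}$, so a single power already controls $\|h\|$. The binding term is $\|h^{-1}\|$ in the fine-scale regime $a \le 1$, where $\|d^{-1}\| = a^{-1}$ and the factor $(1+|t|)^{n-1}$ genuinely appears. Using $A_H(h)^{-1} \ge \tfrac1a + |t| \ge \max(a^{-1},\,1+|t|)$, I would estimate $a^{-1}(1+|t|)^{n-1} \le A_H(h)^{-1}\cdot\big(A_H(h)^{-1}\big)^{n-1}$ up to a constant; this is exactly where the exponent $n = n(\mathfrak{s})$ is produced. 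The remaining single power in $\ell_1 = n+1$ is slack that I would spend to push the accumulated constant down to the clean value $1$, using that $A_H(h)$ is small precisely when $\|h^{-1}\|$ is large (and that near the identity, where the crude bound is lossy, one has $\|h^{-1}\|\approx 1$ and $A_H \approx \tfrac12$, so the product is trivially below $1$).

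I expect the main obstacle to be this last step. One must verify the inverse-norm estimate uniformly across all $a$ and $|t|$ — including the cross-over region $a|t| \sim 1$, where neither $\tfrac1a + |t|$ nor $1 + a\sqrt{1+|t|^2}$ clearly dominates — and confirm that the extra power of $A_H \le 1$ really reduces the constant to exactly $1$ rather than to a merely finite bound. The accompanying bookkeeping of the diagonal exponents (separating $a \ge 1$ from $a \le 1$, and tracking the roles of $\lambda_{\min}$ and $\lambda_{\max}$) is routine but must be carried through each case.
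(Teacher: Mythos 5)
Your route is genuinely different from the paper's: the paper disposes of Lemma \ref{lem:est_AH} in two lines, by citing Theorem 4.10 of \cite{FuRe}, which gives $\|h^{\pm1}\|A_H(h)^{\ell_1}\leq 1$ with the general exponent $\ell_1 = n(\mathfrak{s})-1+2\|Y\|$, and then observing that the hypothesis $0<\lambda_{\min}\leq\lambda_{\max}<1$ forces $\|Y\|=1$, whence \eqref{eqn:ell_1}. Your self-contained computation is sound as far as it goes: the identifications $\mathcal{O}^c=\{\xi_1=0\}$ and $h^T e_1=\pm a(1,t_2,\ldots,t_d)^T$ (using $X_i^Te_1=e_i$ from Lemma \ref{lem:CanonicalBasis}), the resulting closed form $A_H(h)=\min\bigl(\tfrac{a}{1+a|t|},\tfrac{1}{1+a\sqrt{1+|t|^2}}\bigr)$, the norm bounds $\|h\|\leq C(1+a)(1+|t|)$ and $\|h^{-1}\|\leq C(1+|t|)^{n-1}\max(1,a^{-1})$ via the truncated Neumann series, and the domination of $1,a,1/a,|t|,a|t|$ by $A_H(h)^{-1}$ are all correct, and together they yield $\|h^{\pm1}\|A_H(h)^{n(\mathfrak{s})+1}\leq C$ with a finite, group-dependent constant.

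The genuine gap is precisely the one you flag at the end: the reduction of that constant to the value $1$ asserted in the lemma. Spending the spare power of $A_H$ cannot achieve this, because $A_H$ is not arbitrarily small in the region where your crude constants bite: from $A_H(h)\leq\min\bigl(a,\tfrac{1}{1+a}\bigr)$ one only gets the uniform bound $A_H\leq\tfrac{\sqrt{5}-1}{2}\approx 0.618$, so the extra factor multiplies your accumulated constant by at most a fixed number below one. Since that constant absorbs group-dependent quantities — notably the norm constant in $\|X\|\leq C|t|$ for $X=\sum_i t_iX_i$, which grows with the canonical generators (e.g.\ for high-dimensional Toeplitz shearing subgroups) — the product $0.618\,C$ need not be $\leq 1$, and the critical regime is the intermediate range $a\asymp 1$, $|t|\asymp 1$, not the neighborhood of the identity your heuristic addresses. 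So your argument proves the inequality with \emph{some} constant, not with constant $1$; closing that last step is exactly what the sharper analysis behind \cite{FuRe}, Theorem 4.10 provides, and it does not follow from the estimates you set up. It is worth noting, however, that the constant-$1$ form is never needed at full strength in this paper: every downstream use — inequality \eqref{assump} in Proposition \ref{prop:decay-cross-kernel}, condition (v) of Theorem \ref{thm:transfer_decay}, and the hypotheses of Theorem \ref{thm:almost_char_Cc} — permits an arbitrary finite constant, so your proof suffices for all applications even though it falls short of the lemma verbatim.
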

\begin{proof}
We refer to \cite{FuRe}, Theorem 4.10. Note that the cited theorem makes no assumptions on $Y$, and gives the more general formula
\[
\ell_1 = n(\mathfrak{s})-1 + 2 \| Y \|~.
\]
Here we restrict attention to the case  $0 < \lambda_{\min} \le \lambda_{\max} < 1$, which entails $\| Y \| = 1$.
\end{proof}

Thus the only remaining obstacle to the applicability of Theorem \ref{thm:almost_char_Cc} is the overlap control condition from Definition \ref{defn:overlap_control}. Unfortunately, this condition requires somewhat lengthy computations, which we now prepare. Recall that we need to estimate the integral 
\[
\int_{H_0 \setminus h^{-1} K_i(W,V,R)} \Theta_{s,t}(g) \dd g~.
\] We will do this using fairly explicit computations in particular coordinates for $H$. With respect to these coordinates, the following challenges present themselves:
\begin{enumerate}
    \item Estimate norms of products and inverses in $H$;
    \item explicitly determine the integration domain $H_0 \setminus h^{-1} K_i(W,V,R)$ for certain $h$ in $H$;
    \item determine the Haar measure $\dd g$. 
\end{enumerate}

We start by introducing the coordinates we will use: Using the canonical basis $X_2,\ldots, X_d$ of $\mathfrak{s}$, we define, for $t \in \mathbb{R}^{d-1}$ and $a\not= 0$, 
\[
h(t,a) = \left( I_d + \sum_{i=2}^d t_i X_i \right)^{-1} {\rm sgn}(a) {\rm diag}(|a|,|a|^{\lambda_2},\ldots,|a|^{\lambda_d}) \in H~.
\] Here $I_d = {\rm diag}(1,\ldots,1)$ denotes the $d \times d$ identity matrix. It is easy to see that $(t,a) \ni \mathbb{R}^{d-1} \times \mathbb{R}^\times \to H$ is a bijection, and that restricting this mapping to $\mathbb{R}^{d-1} \times \{ 1 \}$ yields a bijection onto $S$. The inversion of the first factor is the cause of some awkwardness later on, but it will allow a fairly simple description of cone-affiliated subset $K_{i/o} \subset H$.

Note that we can write 
\begin{equation}\label{eq:2}
I_d+\sum_{i=2}^d t_i X_i^T = 
  \begin{pmatrix}
  1 &      0^T\\
  t  &       I_{d-1} + A(t)^T              
  \end{pmatrix},
\end{equation}
with $A(t)$ being a $(d-1)\times (d-1)$ strictly upper-triangular matrix satisfying
\begin{equation}
  \label{eq:5}
  \lVert A(t)\rVert \leq C |t|
\end{equation}
with a constant $C$ depending only on $H$.

We next describe group operations with respect to these coordinates: 
\begin{lem} \label{lem:sh_group_coords}
Let $H$ denote a shearlet dilation group, with canonical basis $X_2,\ldots,X_d$ for $\mathfrak{s}$.
\begin{enumerate}
    \item[(a)] There exists a bilinear map $M : \mathbb{R}^{d-1} \times \mathbb{R}^{d-1} \to \mathbb{R}^{d-1}$ such that $h(t,1)^{-1} h(t',1)^{-1} = h(t+t'+M(t,t'),1)^{-1}$. For all $i=2,\ldots, d$, the $i$th entry $M(t,t')_i$ only depends on the entries $t_2,\ldots,t_{i-1}, t_2',\ldots,t_{i-1}'$.
    \item[(b)] There exists a map $B: \mathbb{R}^{d-1} \to \mathbb{R}^{d-1}$, polynomial of degree $n(\mathfrak{s})-1$, such that for all $t \in \mathbb{R}^{d-1}$,
    \[
    h(t,1)^{-1} = h(-t+B(t),1)~.
    \] Here $B(t)_i$ only depends on the entries $t_2,\ldots,t_{i-1}$.
    \item[(c)] For all $a > 0$ and $t= (t_2,\ldots,t_d)^T \in \mathbb{R}^{d-1}$, we have 
    \[
    h(0,a) h(t,1) h(0,a)^{-1} = h((a^{1-\lambda_2} t_2,\ldots,a^{1-\lambda_d} t_d),1)
    \]
\end{enumerate}
\end{lem}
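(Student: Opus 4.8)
The plan is to work throughout in the notation $s(t) := I_d + \sum_{i=2}^d t_i X_i$, so that by Lemma~\ref{lem:CanonicalBasis}(b) one has $S = \{s(t) : t \in \RR^{d-1}\}$, while by definition $h(t,1) = s(t)^{-1}$ and $h(0,a) = {\rm diag}(a, a^{\lambda_2}, \ldots, a^{\lambda_d})$ for $a > 0$. The entire argument rests on the associative algebra structure recorded in Lemma~\ref{lem:CanonicalBasis}(c): since $X_i \in \mathfrak{s}_i$, the relation $\mathfrak{s}_k \mathfrak{s}_\ell \subset \mathfrak{s}_{\max\{k,\ell\}+1}$ yields, by an easy induction on $k \ge 2$, that any product $X_{i_1} \cdots X_{i_k}$ of length $k \ge 2$ lies in $\mathfrak{s}_m$ with $m \ge \max\{i_1, \ldots, i_k\} + 1$. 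Consequently the coefficient of $X_m$ in such a product vanishes unless every factor index satisfies $i_\nu \le m-1$. This single observation governs the dependence structure asserted in both (a) and (b).

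For part (a) I would expand
\[
s(t) s(t') = I_d + \sum_i (t_i + t_i') X_i + \sum_{i,j} t_i t_j' X_i X_j,
\]
and write each product $X_i X_j = \sum_{m \ge \max\{i,j\}+1} c_{ij}^m X_m$ in the canonical basis. Collecting coefficients gives $s(t)s(t') = s\big(t + t' + M(t,t')\big)$ with $M(t,t')_m := \sum_{i,j \le m-1} c_{ij}^m\, t_i t_j'$, which is manifestly bilinear and, by the observation above, depends only on $t_2, \ldots, t_{m-1}$ and $t_2', \ldots, t_{m-1}'$; passing to inverses yields $h(t,1)^{-1} h(t',1)^{-1} = h(t+t'+M(t,t'),1)^{-1}$. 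For part (b) I would use that $X := \sum_i t_i X_i$ is nilpotent with $X^{n(\mathfrak{s})} = 0$, so the Neumann series terminates:
\[
s(t)^{-1} = (I_d + X)^{-1} = \sum_{j=0}^{n(\mathfrak{s})-1} (-1)^j X^j = I_d - X + \Big(X^2 - \cdots + (-1)^{n(\mathfrak{s})-1} X^{n(\mathfrak{s})-1}\Big).
\]
Since $\mathfrak{s}$ is an algebra, the right-hand side lies in $I_d + \mathfrak{s}$ and thus equals $s(\tilde t)$ for a unique $\tilde t$; the linear term $-X$ contributes $-t$, while the bracketed higher-order terms define $B(t)$, a polynomial of degree $n(\mathfrak{s})-1$ whose $m$th entry depends only on $t_2, \ldots, t_{m-1}$ by the index-shift observation. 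Reading off $s(t)^{-1} = s(-t+B(t))$ and inverting gives $h(t,1)^{-1} = s(t) = s(-t+B(t))^{-1} = h(-t+B(t),1)$.

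Part (c) is where the real content lies. Writing $\delta_a = h(0,a)$, the claim reduces via $h(0,a) h(t,1) h(0,a)^{-1} = (\delta_a\, s(t)\, \delta_a^{-1})^{-1}$ to showing $\delta_a s(t) \delta_a^{-1} = s(t')$ with $t_i' = a^{1-\lambda_i} t_i$, equivalently that each canonical generator is an eigenvector of the adjoint action, ${\rm Ad}(\delta_a) X_i = a^{1-\lambda_i} X_i$. The main obstacle is that the conjugation $\delta_a X_i \delta_a^{-1}$ is a priori only controlled entrywise and need not visibly remain a scalar multiple of $X_i$; I would resolve this in two steps. First, because $H$ is the semidirect product of $S$ with $D \cup -D$ (Lemma~\ref{lem:CanonicalBasis}(d)), $D$ normalizes $S$, so ${\rm Ad}(\delta_a)$ preserves $\mathfrak{s}$ and ${\rm Ad}(\delta_a) X_i \in \mathfrak{s}$. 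Second, the map $Z \mapsto Z^T e_1$ sending an element of $\mathfrak{s}$ to its first row is injective — for $Z = \sum_i c_i X_i$ one has $Z^T e_1 = \sum_i c_i e_i$, recovering the coordinates — so it suffices to match first rows. Using $\delta_a^T = \delta_a$, $\delta_a e_1 = a e_1$, and $X_i^T e_1 = e_i$, a direct computation gives
\[
({\rm Ad}(\delta_a) X_i)^T e_1 = \delta_a^{-1} X_i^T \delta_a e_1 = a\, \delta_a^{-1} X_i^T e_1 = a\, \delta_a^{-1} e_i = a^{1-\lambda_i} e_i = \big(a^{1-\lambda_i} X_i\big)^T e_1.
\]
By injectivity this forces ${\rm Ad}(\delta_a) X_i = a^{1-\lambda_i} X_i$; summing against $t_i$ yields $\delta_a s(t) \delta_a^{-1} = s(t')$, and inverting gives the asserted identity.
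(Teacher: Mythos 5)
Your proof is correct and follows essentially the same route as the paper: expanding products of $I_d + \sum_i t_i X_i$ in the canonical basis and exploiting the grading $\mathfrak{s}_k\mathfrak{s}_\ell \subset \mathfrak{s}_{\max\{k,\ell\}+1}$ for part (a) and the dependence claims, the terminating Neumann series for part (b), and reading off the conjugated shear parameters from the first row for part (c). The only difference is one of detail: in (c) you explicitly justify, via normality of $S$ (Lemma~\ref{lem:CanonicalBasis}(d)) and injectivity of $Z \mapsto Z^T e_1$ on $\mathfrak{s}$, that the conjugate stays in $S$ so that $\mathrm{Ad}(h(0,a))X_i = a^{1-\lambda_i}X_i$ can be read off the first row --- a step the paper's terse proof leaves implicit.
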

\begin{proof}
For part (a), we have by definition
\begin{eqnarray*}
h(t,1)^{-1} h(t',1)^{-1} & = &  \left(I_d + \sum_{i=2}^d t_i X_i\right)  \left(I_d + \sum_{i=2}^d t_i' X_i\right) \\
& = & I_d + \sum_{i=2}^{d} (t_i+t_i') X_i + \sum_{i,j=2}^d t_i t_j' X_i X_j \\
& = & I_d + \sum_{i=2}^{d} (t_i+t_i') X_i + \sum_{i} M(t,t')_i X_i ~,
\end{eqnarray*}
where $M(t,t')_i$ denotes the coefficient of the $X_i$-coordinate of $\sum_{\ell,k=2}^d t_k t_\ell' X_k X_\ell$. Clearly this coordinate depends bilinearly on the $t_k,t_\ell'$, and only on those with $\ell,k<i$, by part (c) of Lemma \ref{lem:CanonicalBasis}.

The fact that $t \mapsto h(t,1)$ induces a bijection between $\mathbb{R}^{d-1}$ and $S$ entails the existence of a unique map $B'$ with $h(t,1)^{-1} = h(B'(t),1)$. Furthermore, every unipotent matrix is inverted by a finite Neumann series, and for elements $X \in \mathfrak{s}$, the number of terms in the series is at most $n(\mathfrak{s})-1$. Now part (a) of the current lemma implies that $B'$ is a polynomial of degree $\le n(\mathfrak{s})-1$. Furthermore, part (c) of Lemma \ref{lem:CanonicalBasis} implies that $B'(t) = -t+B(t)$, and $B$ has the described properties.

For part (c) we first note that by definition of the canonical basis, the vector $t \in \mathbb{R}^{d-1}$ such that $h = h(t,1)^{-1}$ is just given by the first row of $h$. Computing the conjugation action of the diagonal matrix on the row entries then gives the equation
   \[
    h(0,a) h(t,1)^{-1} h(0,a)^{-1} = h((a^{1-\lambda_2} t_2,\ldots,a^{1-\lambda_d} t_d),1)^{-1}~.
    \] Inverting both sides of the equation yields (c). 
\end{proof}

\begin{lem}  \label{lem:sh_gr_Haar}
In the $(t,a)$-coordinates, left Haar measure on $H$ is determined as
\[
\int_H f(h) \dd h = \int_{\mathbb{R}^\times} \int_{\mathbb{R}^{d-1}} f(h(t,a)) \dd t \frac{\dd a}{|a|^{d-{\rm tr}(Y)}}~,
\]
where $\dd t$ is the Lebesgue measure of $\mathbb{R}^{d-1}$ and $\dd a$ is the Haar measure of the multiplicative group $\mathbb{R}^\times$.
\end{lem}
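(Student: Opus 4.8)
The plan is to exploit the semidirect product structure $H = S \rtimes (D \cup -D)$ furnished by Lemma~\ref{lem:CanonicalBasis}(d), in which $S$ is normal. In the given coordinates one has the factorization $h(t,a) = h(t,1)\,h(0,a)$ with $h(t,1) \in S$ and $h(0,a) \in D \cup -D$, so $(t,a)$ are precisely coordinates adapted to this product. I would therefore invoke (or, in the present nilpotent-by-$\mathbb{R}^\times$ situation, verify directly) the standard formula for left Haar measure on a semidirect product $N \rtimes K$ with $N$ normal, namely $\dd(nk) = |\det c_k|^{-1}\,\dd n\,\dd k$, where $\dd n,\dd k$ are left Haar measures on $N,K$ and $c_k$ denotes the conjugation $n \mapsto knk^{-1}$ on $N$, viewed as a linear map of the vector group $N$. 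It then remains to identify the three ingredients $\dd n$, $\dd k$, and $|\det c_k|$ in our coordinates.

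For the factor $\dd n$, I claim Lebesgue measure $\dd t$ on $\mathbb{R}^{d-1}$ is a Haar measure on $S$. Indeed, by Lemma~\ref{lem:sh_group_coords}(a) the group law on $S$ in the $t$-coordinates has the triangular form $t'' = t + t' + Q(t,t')$ in which the $i$-th component of the correction $Q$ depends only on coordinates of index strictly less than $i$; consequently the Jacobian of any left translation is lower-triangular with unit diagonal, hence of determinant $1$, so $\dd t$ is left invariant. Since $S$ is abelian it is unimodular, and inversion-invariance of Haar measure shows that the parametrization $t \mapsto h(t,1) = s(t)^{-1}$, which differs from $t \mapsto s(t)$ by a coordinate inversion, still carries $\dd t$ to Haar measure. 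For the factor $\dd k$, the map $a \mapsto h(0,a)$ is a group isomorphism of $\mathbb{R}^\times$ onto $D \cup -D$, so the Haar measure $\dd a$ of $\mathbb{R}^\times$ is exactly $\dd k$; this is the point where the convention that $\dd a$ denotes the \emph{multiplicative} Haar measure (rather than Lebesgue measure) enters, and it is what fixes the final exponent.

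The remaining ingredient is $|\det c_{h(0,a)}|$. By Lemma~\ref{lem:sh_group_coords}(c), conjugation by $h(0,a)$ acts on $S$ in $t$-coordinates, for $a>0$, as the diagonal linear map $t \mapsto (a^{1-\lambda_2}t_2,\dots,a^{1-\lambda_d}t_d)$; for general $a \in \mathbb{R}^\times$ the sign factor is $\pm I_d$, which is central and hence contributes trivially, so the action is $t \mapsto (|a|^{1-\lambda_i}t_i)_{i=2}^d$. Its determinant is $\prod_{i=2}^d |a|^{1-\lambda_i} = |a|^{(d-1)-\sum_{i=2}^d \lambda_i} = |a|^{d-{\rm tr}(Y)}$, using ${\rm tr}(Y) = 1 + \sum_{i=2}^d \lambda_i$. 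Substituting the three ingredients into $\dd(nk) = |\det c_k|^{-1}\,\dd n\,\dd k$ then gives $\dd h = |a|^{-(d-{\rm tr}(Y))}\,\dd t\,\dd a = \dd t\,\dfrac{\dd a}{|a|^{d-{\rm tr}(Y)}}$, as claimed.

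I expect the only genuinely delicate points to be bookkeeping rather than conceptual: first, confirming that $\dd t$ (and not some nonconstant density) is Haar on $S$, which rests entirely on the unit-diagonal triangular structure of the group law; and second, keeping the measure conventions straight, since writing $\dd a$ for multiplicative Haar (equivalently $|a|^{-1}$ times Lebesgue) is precisely what produces the exponent $d-{\rm tr}(Y)$ rather than $d-{\rm tr}(Y)+1$. If a fully self-contained argument avoiding the quoted semidirect-product formula is preferred, the same computation can be recast as a direct check of left invariance of the proposed measure under left translations by the generators $h(t_0,1) \in S$ and $h(0,a_0) \in D\cup -D$, the second of which produces exactly the Jacobian factor $|\det c_{h(0,a_0)}| = |a_0|^{d-{\rm tr}(Y)}$ computed above.
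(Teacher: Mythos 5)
Your proof is correct and takes essentially the same route as the paper's: identify Lebesgue measure $\dd t$ as Haar measure on $S$ via the triangular group law of Lemma~\ref{lem:sh_group_coords}(a) (using that $S$ is abelian to pass between the parametrizations $t\mapsto h(t,1)^{-1}$ and $t\mapsto h(t,1)$), compute the conjugation Jacobian $|a|^{d-{\rm tr}(Y)}$ from Lemma~\ref{lem:sh_group_coords}(c), and assemble the result with the standard semidirect-product Haar formula. Your write-up is simply more explicit than the paper's sketch about the sign factor $\pm I_d$ and the exponent bookkeeping, all of which is handled correctly.
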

\begin{proof}
It is easy to see from part (a) of the previous lemma that Haar integration on $S$ is given by
\[
\int_{\mathbb{R}^{d-1}} f(h(t,1)^{-1}) \dd t~.
\] Since $S$ is an abelian group, this implies that 
\[
\int_{\mathbb{R}^{d-1}} f(h(t,1)) \dd t 
\] yields the same Haar integral. Furthermore, one can use part (c) of the previous lemma to determine the impact of the conjugation action of $D$ on Haar measure of $S$. Now a standard formula for the Haar integral of semidirect products, i.e. \cite[15.29]{MR551496}, yields the desired result.
\end{proof}

We next provide approximate descriptions of the cone-affiliated sets with respect to the coordinates that we just introduced. 
In order to do this, we first parameterize the open orbit $\mathcal O$ by the global chart
provided by affine coordinates
\[
\Omega: \Bbb{R}^\times\times\Bbb{R}^{d-1} \to \mathcal O\qquad
\Omega(\tau,v)=\tau (1,v^T)^T,
\]
and consider the corresponding diffeomorphism to its image
\[ \omega:\Bbb{R}^{d-1}\to S^{d-1}\cap \mathcal{O}, \qquad \omega(v)=
\frac{(1,v^T)^T}{\sqrt{1+|v|^2}}.
\]

Given $\epsilon>0$, we set
\[
W_{\epsilon}=\{v\in\Bbb{R}^{d-1} : |v| < \epsilon \}=B_\epsilon(0).
\]
Clearly, $\{W_{\epsilon}:\epsilon>0\}$ is a neighbourhood basis of the origin in
$\Bbb{R}^{d-1}$, and so $\{\omega(W_{\epsilon}) :\epsilon>0\}$ is a neighbourhood basis of
$\xi_0=(1,0,\ldots,0)\in S^{d-1}\cap \mathcal O$.  
Furthermore, for fixed $0<\tau_1<\tau_2$ and $\epsilon_0>0$  the set
\[
V= \Omega(\, (\tau_1,\tau_2)\times W_{\epsilon_0}\, )
\]
is an open subset with $V\Subset \mathcal O$.

\begin{lem} \label{lem:cone_t_a}
 Assume that $\lambda_{\min},\lambda_{\max}$, defined in Lemma \ref{lem:shear_cone_approx}, fulfill $0 < \lambda_{\min} \le \lambda_{\max} < 1$. Let $V =  \Omega(\, (\tau_1,\tau_2)\times W_{\epsilon_0}\, )$, $R>1$, where $0 < \tau_1 < 1 < \tau_2$ and $W = \omega(W_\epsilon)$, with $\epsilon<1$. Suppose $R$ is sufficiently large, meaning,
\begin{equation}\label{eq:R-suff-large}
R > 2\tau_2 \max\left\{1, (\frac{2\epsilon_0}{\epsilon})^{\frac{1}{1-\lambda_{\max}}}, (2C \epsilon_0)^{\frac{1}{1-\lambda_{\max}}} \right\},
\end{equation}
with $C$ from \eqref{eq:5}.
Then, the following inclusions hold: 
\[ \left\{ h(t,a) : |t| < \delta_0, 0 < a < a_0 \right\} \subseteq K_i(W,V,R) \subseteq K_o(W,V,R) \subseteq \left\{ h(t,a) : |t| < \delta_1, 0 < a < a_1 \right\} ~,
\]
where $a_0,a_1,\delta_0,\delta_1$ are given by 
\[
a_0 = \frac{\tau_1}{R}~,~\delta_0 = \frac{\epsilon}{3}
\]
and 
\[
a_1 = \frac{2\tau_2}{R} ~,~ \delta_1 = 3 \epsilon~.
\]
\end{lem}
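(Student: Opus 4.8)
The plan is to reduce both inclusions to explicit estimates on the dual action $h(t,a)^{-T}$ expressed in the affine coordinates $\Omega(\tau,v)=\tau(1,v^T)^T$. First I would record the description of the truncated cone in these coordinates: since $W=\omega(W_\epsilon)$, a point $\Omega(\tau',w')$ lies in $C(W,R)$ precisely when $\tau'>0$, $|w'|<\epsilon$, and $\tau'\sqrt{1+|w'|^2}>R$. The central computation is the action of $h(t,a)^{-T}=(I_d+\sum_i t_i X_i^T)\,\mathrm{sgn}(a)\,\mathrm{diag}(|a|^{-1},|a|^{-\lambda_2},\dots,|a|^{-\lambda_d})$ on $\xi=\Omega(\tau,v)\in V$. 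Using \eqref{eq:2}, for $a>0$ this yields $h(t,a)^{-T}\xi=\Omega(\tau/a,\,w')$ with $w'=t+\tilde v+A(t)^T\tilde v$ and $\tilde v=(a^{1-\lambda_2}v_2,\dots,a^{1-\lambda_d}v_d)$. Two elementary facts then drive everything: the scale/direction decoupling (note that $w'$ is independent of $\tau$), and the contraction estimate $|\tilde v|\le a^{1-\lambda_{\max}}|v|$ valid for $0<a<1$, combined with $\|A(t)\|\le C|t|$ from \eqref{eq:5}.

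Before the two quantitative inclusions I would dispose of the sign issue: for $a<0$ and any $\xi=\Omega(\tau,v)\in V$ (so $\tau>\tau_1>0$), the first coordinate of $h(t,a)^{-T}\xi$ equals $-\tau/|a|<0$, whereas every point of $C(W)$ has positive first coordinate; hence $h(t,a)^{-T}\xi\notin C(W)$. Consequently $K_o(W,V,R)$, and a fortiori $K_i(W,V,R)$, contains only elements with $a>0$, and the middle inclusion $K_i\subseteq K_o$ is immediate from the definitions.

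For the inner inclusion I would take $|t|<\delta_0=\epsilon/3$ and $0<a<a_0=\tau_1/R$ and verify $h(t,a)^{-T}V\subseteq C(W,R)$. The scale condition is painless: since $\tau>\tau_1$ and $a<a_0$, one has $\tau/a>\tau_1/a_0=R$, so $\tau'\sqrt{1+|w'|^2}\ge\tau/a>R$. For the direction condition I would bound $|w'|\le|t|+|\tilde v|(1+C|t|)\le \epsilon/3+a^{1-\lambda_{\max}}\epsilon_0(1+C\epsilon/3)$; the hypothesis \eqref{eq:R-suff-large} is exactly what forces $a_0^{1-\lambda_{\max}}\epsilon_0<\epsilon/2$ and $C\,a_0^{1-\lambda_{\max}}\epsilon_0<1/2$ (the second and third entries of the maximum), whence $|w'|<\epsilon/3+\epsilon/2+\epsilon/6=\epsilon$, while the first entry $1$ guarantees $a_0<1$ so that the contraction estimate applies. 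For the outer inclusion I would run the converse: given $h(t,a)\in K_o$ there is $\xi\in V$ with $h(t,a)^{-T}\xi\in C(W,R)$, so $a>0$ and $|w'|<\epsilon<1$; from $\sqrt{1+|w'|^2}<\sqrt2$ and $\tau/a>R/\sqrt2$ we get $a<\sqrt2\,\tau_2/R<2\tau_2/R=a_1<1$. Then $|\tilde v|<a_1^{1-\lambda_{\max}}\epsilon_0<\epsilon/2$ and $C|\tilde v|<1/2$, and solving the implicit inequality $|t|\le|w'|+|\tilde v|(1+C|t|)$ gives $|t|(1-C|\tilde v|)<\epsilon+|\tilde v|$, hence $|t|<2(\epsilon+\epsilon/2)=3\epsilon=\delta_1$.

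The step I expect to require the most care is the outer-inclusion bound on $|t|$: because $A(t)$ itself depends on $t$, the estimate for $|t|$ is implicit, and closing it rigorously hinges on having made $C|\tilde v|$ strictly less than $1/2$ so that the $|t|$-term can be absorbed to the left-hand side. This is precisely the role of the third entry in the maximum of \eqref{eq:R-suff-large}, and tracking the numerical constants so that they land exactly on $\delta_0=\epsilon/3$ and $\delta_1=3\epsilon$ is the main bookkeeping burden; everything else is routine once the coordinate formula for $h(t,a)^{-T}\xi$ is in hand.
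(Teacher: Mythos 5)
Your proposal is correct and follows essentially the same route as the paper's proof: computing $h(t,a)^{-T}\xi$ in the affine coordinates $\Omega$ via \eqref{eq:2}, describing $C(W,R)$ in those coordinates, using the contraction bound $|\tilde v|\le a^{1-\lambda_{\max}}|v|$ together with $\|A(t)\|\le C|t|$, and closing the implicit inequality in $|t|$ by absorbing the $C|t|$-term, with the three entries of the maximum in \eqref{eq:R-suff-large} playing exactly the roles you identify. The only (welcome) refinements are that you use the exact characterization $\tau'\sqrt{1+|w'|^2}>R$ in place of the paper's sandwich \eqref{eq:cone-omega} and that you make the exclusion of $a<0$ explicit, which the paper's proof leaves implicit.
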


\begin{proof}
Our argument is inspired by the proof of Proposition 30 in \cite{AlDaDeMDeVFu}, so we will not expand on its details here. 

Given the definition of $V$, we have 
\begin{equation}
\label{eq:V-transformed}
    (h^{-1})^T(V)=\Omega\left((a^{-1}\tau_1,a^{-1}\tau_2)\times (t+(I_{d-1}+A(t)^T)W_{\epsilon_0}^a)\right),
\end{equation}
where $W_{\epsilon_0}^a:={\rm diag}(a^{1-\lambda_2},\ldots,a^{1-\lambda_d})(W_{\epsilon_0})$.
It is also easy to see that for every $0<R$ and $0<\epsilon\leq 1$, we have 
\begin{equation}
\label{eq:cone-omega}
 (R,\infty)\times W_\epsilon\subseteq     \Omega^{-1}\left(C(\omega(W_\epsilon), R)\right)\subseteq (\frac{R}{2},\infty)\times W_\epsilon.
\end{equation}
By \eqref{eq:V-transformed} and \eqref{eq:cone-omega}, if $h=h(t,a)$ belongs to $K_o(W,V,R)$,  then we must have 
\begin{eqnarray*}
&\mbox{(i)}& 0<a<\frac{2\tau_2}{R}\\
&\mbox{(ii)}& \exists \xi\in B_{\epsilon_0}(0) \mbox{ s.t. } |t+(I_{d-1}+A(t)^T){\rm diag}(a^{1-\lambda_2},\ldots,a^{1-\lambda_d})\xi|<\epsilon.
\end{eqnarray*}
Since $R>2\tau_2$, we have $a<1$, and consequently, we get
$$|t|<\epsilon+(1+C|t|)a^{1-\lambda_{\max}}\epsilon_0\leq \epsilon+(1+C|t|)(\frac{2\tau_2}{R})^{1-\lambda_{\max}}\epsilon_0.$$
The above inequality, together with $R > \max\left\{2\tau_2 (\frac{2\epsilon_0}{\epsilon})^{\frac{1}{1-\lambda_{\max}}}, 2\tau_2 (2C \epsilon_0)^{\frac{1}{1-\lambda_{\max}}} \right\}$, gives $|t|<3\epsilon$. 

For the other inclusion, note that by \eqref{eq:cone-omega}, $h\in K_i$ is guaranteed if $\Omega^{-1}((h^{-1})^T(V))\subseteq (R,\infty)\times W_\epsilon$. For the latter inclusion to hold, it is enough to have
\begin{eqnarray*}
&\mbox{(i)}& 0<a<\frac{\tau_1}{R},\\
&\mbox{(ii)}&  |t|+(1+C|t|)a^{1-\lambda_{\max}}\epsilon_0<\epsilon.
\end{eqnarray*}
Combining \eqref{eq:R-suff-large} and $0<a<\frac{\tau_1}{R}$, one can verify that (ii) holds whenever $|t|<\frac{\epsilon}{3}$.
\end{proof}

For estimates of the function $\Theta_{r,s}$, we need to estimate norms of products of the type $h(t,a)^{-1}h(t',a')$. This is provided by the next lemma:
\begin{lem}
\label{lem:est_norm_prod}
Let $(t,a),(t',a') \in \mathbb{R}^{d-1} \times \mathbb{R}^+$. Assume
that $\lambda_{\min},\lambda_{\max}$, defined in Lemma \ref{lem:shear_cone_approx}, fulfill $0 < \lambda_{\min} \le \lambda_{\max} < 1$. 
\begin{enumerate}
    \item[(a)] Let $V =  \Omega(\, (\tau_1,\tau_2)\times W_{\epsilon_0}\, )$, $R$ satisfying \eqref{eq:R-suff-large}, where $0 < \tau_1 < 1 < \tau_2$ and $W = \omega(W_\epsilon)$, with $\epsilon<1$. Then there exists a constant $C_1$ such that for all $h = h(t,a) \in K_o(W,V,R)$, the following inequalities hold: 
    \[
   a^{\lambda_{\min}} \le \| h(t,a) \| \le C_1 a^{\lambda_{\min}}
    \]
    \item[(b)] $\| h(t,a)^{-1} h(t',a') \| \ge a^{-1}a'$~.
    \item[(c)] Assume that $a,a'<1$ and $|t|<\delta_1$. There exist constants $C_{\mathfrak{s}},C_M>0$ depending only on $H$ such that 
    \[ \| h(t,a)^{-1} h(t',a') \| \ge a^{-1} (a')^{\lambda_{\max}} \left( -\delta_1 + (1-C_M \delta_1) \left\{\begin{array}{cc}
\frac{|t'|}{C_{\mathfrak{s}}} & |t'| < C_{\mathfrak{s}} \\ \frac{|t'|^{1/(n-1)}}{C_{\mathfrak{s}}^{1/(n-1)}} & |t'|\ge C_{\mathfrak{s}} \end{array}~  \right. \right) ~,
    \]
    where $n=n(\mathfrak{s})$.
\end{enumerate}
\end{lem}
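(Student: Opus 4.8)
The plan is to work throughout in the coordinates $h(t,a)=s(t)^{-1}D_a$, where $s(t)=I_d+\sum_{i=2}^d t_iX_i$ is the shearing factor and $D_a=\mathrm{diag}(a,a^{\lambda_2},\dots,a^{\lambda_d})$, and to exploit that, by the normalization $X_i^Te_1=e_i$ of the canonical basis in Lemma~\ref{lem:CanonicalBasis}, each $X_i$ has first row $e_i^T$, so that the first row of any $s(\tau)$ is $(1,\tau_2,\dots,\tau_d)$.

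For part (a): since $0<a<1$ and all exponents satisfy $\lambda_i\le\lambda_{\max}<1$, the largest diagonal entry of $D_a$ is $a^{\lambda_{\min}}$, so $\|D_a\|=a^{\lambda_{\min}}$. The lower bound is immediate from evaluating on a coordinate vector $e_j$ with $\lambda_j=\lambda_{\min}$: because $s(t)^{-1}$ is unipotent upper triangular, $|s(t)^{-1}e_j|\ge 1$, whence $\|h(t,a)\|\ge|h(t,a)e_j|=a^{\lambda_{\min}}|s(t)^{-1}e_j|\ge a^{\lambda_{\min}}$. For the upper bound I would use $\|h(t,a)\|\le\|s(t)^{-1}\|\,a^{\lambda_{\min}}$ and observe that Lemma~\ref{lem:cone_t_a} confines $t$ to $|t|<\delta_1$ on $K_o(W,V,R)$, so $\|s(t)^{-1}\|\le C_1$ with $C_1$ depending only on $H$ and $V$. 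Part (b) is even shorter: writing $h(t,a)^{-1}h(t',a')=D_a^{-1}\,u\,D_{a'}$ with $u=s(t)s(t')^{-1}$ unipotent, one has $ue_1=e_1$ and $D_{a'}e_1=a'e_1$, so $D_a^{-1}uD_{a'}e_1=a^{-1}a'e_1$ and $\|h(t,a)^{-1}h(t',a')\|\ge a^{-1}a'$.

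Part (c) is the substantial one. First I would record $h(t,a)^{-1}h(t',a')=D_a^{-1}s(\tau)D_{a'}$ where, combining parts (a) and (b) of Lemma~\ref{lem:sh_group_coords}, the unipotent factor $s(t)s(t')^{-1}$ equals $s(\tau)$ with $\tau=t+\sigma(t')+M(t,\sigma(t'))$ and $\sigma(t'):=-t'+B(t')$; here $\sigma$ encodes inversion in $S$, is polynomial of degree $\le n-1$ (with $n=n(\mathfrak s)$), and satisfies the involution identity $\sigma\circ\sigma=\mathrm{id}$ because $s(\sigma(\cdot))=s(\cdot)^{-1}$. Reading off the $(1,j)$-entry $\tau_j$ of $s(\tau)$ and using $a'<1$ (so $(a')^{\lambda_j}\ge(a')^{\lambda_{\max}}$), evaluation on the coordinate vector maximizing $|\tau_j|$ gives $\|h(t,a)^{-1}h(t',a')\|\ge a^{-1}(a')^{\lambda_{\max}}\|\tau\|_\infty$, and after passing to the Euclidean norm (absorbing the dimensional factor into the constants) it remains to bound $|\tau|$ from below in terms of $|t'|$.

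For the latter, using $|t|<\delta_1$ and the bilinearity bound $|M(t,s)|\le C_M|t|\,|s|$ (with $C_M$ from $H$), I obtain $|\tau|\ge(1-C_M\delta_1)|\sigma(t')|-\delta_1$, which already produces the bracketed expression provided $|\sigma(t')|\ge\Psi(t')$, with $\Psi$ the claimed piecewise function. The core estimate is therefore the two-regime lower bound on $|\sigma(t')|$, and this is where I expect the main difficulty. Since $\sigma$ has degree $\le n-1$ and satisfies $\sigma(0)=0$, there is a constant $C_{\mathfrak s}\ge 1$ with $|\sigma(s)|\le C_{\mathfrak s}\max\{|s|,|s|^{n-1}\}$ for all $s$; feeding $s=\sigma(t')$ into this and using $\sigma(\sigma(t'))=t'$ yields $|t'|\le C_{\mathfrak s}\max\{|\sigma(t')|,|\sigma(t')|^{n-1}\}$. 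A short case analysis on whether $|\sigma(t')|\lessgtr 1$ then gives $|\sigma(t')|\ge|t'|/C_{\mathfrak s}$ in the first case and $|\sigma(t')|\ge(|t'|/C_{\mathfrak s})^{1/(n-1)}$ in the second; checking these against the two branches of $\Psi$, which are separated exactly by $|t'|\lessgtr C_{\mathfrak s}$ (where $\Psi=1$), confirms $|\sigma(t')|\ge\Psi(t')$ in all cases. Finally I would enlarge $C_{\mathfrak s}$ once more to swallow the norm-equivalence factor from $\|\tau\|_\infty\ge\|\tau\|_2/\sqrt{d-1}$, so that all constants depend only on $H$ (equivalently on $V$). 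The delicate points to watch are the matching of the two regimes at $|t'|\sim C_{\mathfrak s}$ and the bookkeeping ensuring that the degree of $\sigma$ is genuinely $\le n-1$, so that the exponent $1/(n-1)$ appears; both are guaranteed by Lemma~\ref{lem:sh_group_coords}(b).
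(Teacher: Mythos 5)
Your proposal is correct and takes essentially the same route as the paper's proof: the same coordinates $h(t,a)=s(t)^{-1}D_a$, the same reduction of part (c) to the unipotent factor $s(\tau)$ with $\tau=t+\sigma(t')+M(t,\sigma(t'))$ where $\sigma(t')=-t'+B(t')$, the same involution identity $\sigma\circ\sigma=\mathrm{id}$ producing the two-regime lower bound on $|\sigma(t')|$ with the branch split at $|t'|=C_{\mathfrak{s}}$, and the same bilinearity bound on $M$. The only cosmetic difference is the last step: the paper bounds the operator norm from below by the \emph{Euclidean} norm of the first row of $D_a^{-1}s(\tau)D_{a'}$, obtaining $a^{-1}(a')^{\lambda_{\max}}|\tau|$ directly, which avoids your detour through $\|\tau\|_\infty$ and the ensuing enlargement of $C_{\mathfrak{s}}$ (a step that, as you yourself flag, requires the regime-matching verification near $|t'|\sim C_{\mathfrak{s}}$).
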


\begin{proof}
For the proof of $(a)$, we note that the diagonal entries of $h(t,a)$ are $a,a^{\lambda_2},\ldots,a^{\lambda_d}$. Since $h\in K_o(W,V,R)$ and $R$ satisfies \eqref{eq:R-suff-large}, by Lemma~\ref{lem:cone_t_a} we know $a< 1$. So,
\[
\| h(t,a) \| \ge a^{\lambda_{\min}}~.
\]
For the upper bound, first note that by Lemma~\ref{lem:cone_t_a}, we have $|t|<3\epsilon$. 
Now, submultiplicativity of the norm yields 
\begin{eqnarray*}
\| h(t,a) \|  & \le&  \left\| \left( I_d + \sum_{i=2}^d t_i X_i \right)^{-1}\right\| 
\left\| {\rm diag}(a,a^\lambda_2,\ldots,a^{\lambda_d}) \right\|  \\
& \le & C_1 a^{\lambda_{\min}}~,
\end{eqnarray*}
where $C_1$ is the maximum attained by the continuous map $t\mapsto \left\|\left( I_d + \sum_{i=2}^d t_i X_i \right)^{-1}\right\|$ on the compact set $\overline{B_{3\epsilon}(0)}$.

For part (b), we simply use that $a^{-1}a'$ is the upper left entry of $h(t,a)^{-1} h(t',a')$.

For part (c), we first compute
\begin{eqnarray*}
h(t,1)^{-1} h(t',1) & = & h(t,1)^{-1} h(-t'+B(t'),1)^{-1} \\
& = & h(t-t'+B(t')+M(t,-t'+B(t')),1)^{-1} \\
& = & I_d + \sum_{i=2}^d t_i'' X_i~,
\end{eqnarray*}
with 
\[
t'' = t-t'+B(t')+M(t,-t'+B(t'))~,
\] where $B$ is the polynomial map from Lemma \ref{lem:sh_group_coords}(b). 
Hence 
\begin{eqnarray*}
h(t,a)^{-1} h(t',a') &  =&  {\rm diag}(a^{-1},a^{-\lambda_2},\ldots,a^{-\lambda_d})
\left(  I_d + \sum_{i=2}^d t_i'' X_i \right) {\rm diag}(a',(a')^{\lambda_2},\ldots,(a')^{\lambda_d})~.
\end{eqnarray*}
The norm of that product is greater or equal to the norm of its first row, which is \[
(a^{-1}a',a^{-1}(a')^{\lambda_2} t_2'',\ldots, a^{-1} (a')^{\lambda_d} t_d'')~,\]  hence 
\[
\left\| h(t,a)^{-1} h(t',a') \right\| \ge a^{-1} (a')^{\lambda_{\max}} | t''|~.
\]
For the estimate of $|t''|$, note that by Lemma~\ref{lem:sh_group_coords}(b), $-t'+B(t')$ is a polynomial of order $n-1$. 
The polynomial has zero constant term because $h(0,1)^{-1} = h(0,1)$, hence we have 
\begin{equation} \label{eqn:est_Bt}
|-t' + B(t')| \le \sum_{k=1}^{n-1} c_k |t'|^k \le C_{\mathfrak{s}} \left\{ \begin{array}{cc}
|t'| & |t'|< 1 \\ |t'|^{n-1} & |t'|\ge 1
\end{array}, \right. 
\end{equation} with positive constants $c_k,C_{\mathfrak{s}}$ depending on the generators $X_2,\ldots,X_d$ of the Lie algebra $\mathfrak{s}$. Since the inversion map on $H$ is its own inverse, we have
$t' = t'-B(t') + B(-t'+B(t'))$. This implies, via (\ref{eqn:est_Bt}), the estimate
\[
|t'| \le C_{\mathfrak{s}} \left\{ \begin{array}{cc}
|-t'+B(t')| & |-t'+B(t')|< 1 \\ |-t'+B(t')|^{n-1} & |-t'+B(t')|\ge 1 \end{array}~ \right. ~. 
\] Distinguishing the cases $|-t'+B(t')|\ge 1$ and $|-t'+B(t')| <1$ allows to derive 
\[
|-t'+B(t')| \ge \min \left\{ \frac{|t'|^{1/(n-1)}}{C_{\mathfrak{s}}^{1/(n-1)}}, \frac{|t'|}{C_{\mathfrak{s}}} \right\} ~, 
\]
or equivalently, 
\[
|-t'+B(t')| \ge \left\{ \begin{array}{cc}
\frac{|t'|}{C_{\mathfrak{s}}} & |t'| < C_{\mathfrak{s}} \\ \frac{|t'|^{1/(n-1)}}{C_{\mathfrak{s}}^{1/(n-1)}} & |t'|\ge C_{\mathfrak{s}} \end{array}~ \right. ~.
\]

It follows, using $|t| < \delta_1$ and the constant $C_M$ bounding the bilinear map $M$, 
\[
|t''| \ge |-t'+B(t')|(1-C_M \delta_1) - \delta_1 \ge -\delta_1 + (1-C_M \delta_1) \left\{\begin{array}{cc}
\frac{|t'|}{C_{\mathfrak{s}}} & |t'| < C_{\mathfrak{s}} \\ \frac{|t'|^{1/(n-1)}}{C_{\mathfrak{s}}^{1/(n-1)}} & |t'|\ge C_{\mathfrak{s}} \end{array}~  \right..
\]

\end{proof}

\begin{lem} \label{lem:sh_overlap_control}
Let $H= DS \cup -DS$ denote a shearlet dilation group, with $0 < \lambda_{\min} \leq \lambda_{\max} < 1$. Let $H_0 = DS$. Then $H$ fulfills the overlap control condition, with 
\[ \gamma_0= 2d+1~,~\gamma_1 = \frac{\lambda_{\min}}{1-\lambda_{\max}}~,~ \gamma_2 =  \frac{\lambda_{\min}\lambda_{\max}}{1-\lambda_{\max}}\]
\end{lem}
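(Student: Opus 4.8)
The plan is to reduce to a single direction, pass to the explicit $(t,a)$-coordinates, and then carry out a scale-shear integration in which the two defining relations among the $\gamma_i$ do the essential work. First, by Lemma~\ref{lem:overlap_control} it suffices to verify the overlap control condition at the base point $\xi_0 = (1,0,\ldots,0) \in \mathcal{O}\cap S^{d-1}$. I take $H_0 = DS$; this is half-space adapted, since every $h = ds \in DS$ has first row $a\,(1,s_{12},\ldots)$ with $a>0$, so that the first coordinate of $h^T\xi_0$ is positive, forcing $H_0^T\xi_0 \subseteq \{\xi_1>0\}$ and $-(H_0^T\xi_0)\subseteq\{\xi_1<0\}$, a disjoint decomposition of $\mathcal{O}$. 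I fix $V = \Omega((\tau_1,\tau_2)\times W_{\epsilon_0})$ and choose $W',R'$ as permitted. Given a pair $(W'',R'')$ I invoke Lemma~\ref{lem:cone_t_a} to select $(W,R)$ and to sandwich the cone-affiliated sets between coordinate boxes: $K_i(W'',V,R'') \supseteq \{h(t',a') : |t'|<\delta_0,\,0<a'<a_0\}$ and $K_o(W,V,R)\subseteq\{h(t,a):|t|<\delta_1,\,0<a<a_1\}$ with $a_1<1$; in particular $\|h\|\asymp a^{\lambda_{\min}}$ on $K_o(W,V,R)$ by Lemma~\ref{lem:est_norm_prod}(a).

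The key preparatory move is to use left invariance of Haar measure. Since $h\in H_0$, the substitution $g\mapsto h^{-1}g$ gives
\[
\int_{H_0\setminus h^{-1}K_i(W'',V,R'')}\Theta_{s,t}(g)\,\dd g = \int_{H_0\setminus K_i(W'',V,R'')}\Theta_{s,t}(h^{-1}g)\,\dd g,
\]
so the integration domain becomes independent of $h$, with all $h$-dependence now inside $\Theta_{s,t}(h^{-1}g) = (1+\|h^{-1}g\|)^{-s}(1+\|g^{-1}h\|)^{-t}$. Writing $g=h(t',a')$ and using the box containment, the domain lies in the union of a coarse-scale part $\{a'\geq a_0\}$ and a fine-scale large-shear part $\{a'<a_0,\ |t'|\geq\delta_0\}$. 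On these I bound $\|h^{-1}g\|$ and $\|g^{-1}h\|$ from below via Lemma~\ref{lem:est_norm_prod}(b) (yielding $\|h^{-1}g\|\geq a^{-1}a'$ and $\|g^{-1}h\|\geq a/a'$) and Lemma~\ref{lem:est_norm_prod}(c) (yielding, on the fine-scale part where $a,a'<1$ and $|t|<\delta_1$, the estimate $\|h^{-1}g\|\geq a^{-1}(a')^{\lambda_{\max}}\psi(t')$, with $\psi(t')\gtrsim |t'|^{1/(n(\mathfrak{s})-1)}$ for large $|t'|$).

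To organise the resulting integral I apply the anisotropic substitution $t'_i = (a')^{1-\lambda_i}\tau_i$, which by Lemma~\ref{lem:sh_group_coords}(c) reflects the conjugation action of the scaling subgroup and, crucially, has Jacobian $(a')^{d-{\rm tr}(Y)}$ that exactly cancels the Haar density $(a')^{-(d-{\rm tr}(Y))}$ of Lemma~\ref{lem:sh_gr_Haar}; in the new variables the measure is Lebesgue and the shear decouples from the scale. I then insert the threshold exponents $s=\gamma_0+\gamma_1 L$ and $t=\gamma_0+\gamma_2 L$ (legitimate, since $\Theta_{s,t}$ is decreasing in $s,t$), and the two relations built into the $\gamma_i$ do the work: the identity $\gamma_2=\lambda_{\max}\gamma_1$ makes the surviving exponent of $a'$ equal to the $L$-independent quantity $\gamma_0(1-\lambda_{\max})$, so a single base threshold $\gamma_0=2d+1=\dim H+d+1$ (matching the $\LL^1(H)$-threshold of Lemma~\ref{lem:theta-decays}) secures convergence of the $\tau$- and $a'$-integrals for all $L$; and the identity $\gamma_1-\gamma_2=\lambda_{\min}$ makes the net power of $a$ equal to $a^{(\gamma_1-\gamma_2)L}=a^{\lambda_{\min}L}\asymp\|h\|^L$. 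Together with the boundedness of $\|h\|$, hence of $a$, on $K_o(W,V,R)$ from Lemma~\ref{lem:related_to_assumptions}, this yields $\int\le C\|h\|^L$ with $C$ independent of $h\in K_o(W,V,R)$.

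The main obstacle is the uniform control of the joint scale-shear integral over the fine-scale large-shear region. The difficulty is twofold: the shear lower bound of Lemma~\ref{lem:est_norm_prod}(c) grows only like $|t'|^{1/(n(\mathfrak{s})-1)}$, which for Toeplitz-type groups of high nilpotency degree is too slow to give $\tau$-integrability against a fixed power of $s$ alone; and the factor $(a')^{-\lambda_{\max}s}$ produced by that same bound threatens the $a'$-integral near $a'=0$. Both are resolved precisely by the anisotropic rescaling together with $\gamma_2=\lambda_{\max}\gamma_1$, so the careful bookkeeping of the powers of $a'$ — rather than any single norm estimate — is the delicate heart of the argument. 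The appearance of $\tfrac{1}{1-\lambda_{\max}}$ in $\gamma_1$ is dictated by the same scaling that governs the admissible size of $R$ in \eqref{eq:R-suff-large}, and tracking it through the coarse-scale region is the remaining piece of bookkeeping.
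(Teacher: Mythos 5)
Your overall architecture matches the paper's: reduction to the single direction $\xi_0$ via Lemma~\ref{lem:overlap_control}, the box sandwich of $K_i$ and $K_o$ from Lemma~\ref{lem:cone_t_a}, the left-translation $g \mapsto h^{-1}g$ making the domain $h$-independent, the split into a coarse-scale part $\{a' \ge a_0''\}$ and a fine-scale large-shear part $\{a' \le a_0'',\ |t'| \ge \delta_0''\}$, and even the final exponent arithmetic (the paper substitutes $\alpha = L\lambda_{\min}$ and $\beta = L\lambda_{\min}/(1-\lambda_{\max})$, which is exactly your pair of identities $\gamma_1 - \gamma_2 = \lambda_{\min}$ and $\gamma_2 = \lambda_{\max}\gamma_1$ keeping the residual exponents at the $L$-independent value $\gamma_0$). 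But your convergence mechanism on the fine-scale region has a genuine gap. You propose to obtain $\tau$-integrability from the growth $\psi(t') \gtrsim |t'|^{1/(n(\mathfrak{s})-1)}$ in Lemma~\ref{lem:est_norm_prod}(c), concede that this rate is too slow for high nilpotency degree, and then assert that the anisotropic substitution $t_i' = (a')^{1-\lambda_i}\tau_i$ resolves it. It cannot: precisely because its Jacobian cancels the Haar density, that substitution is measure-preserving and only redistributes mass; it cannot make a shear-integral that diverges at fixed scale converge. Concretely, at $L=0$ the condition must hold with $s = t = \gamma_0 = 2d+1$, and a pointwise bound decaying like $|t'|^{-(2d+1)/(n(\mathfrak{s})-1)}$ fails to be integrable over $\mathbb{R}^{d-1}$ once $n(\mathfrak{s})-1 > (2d+1)/(d-1)$ --- exactly the Toeplitz-type groups the lemma is required to cover. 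No identity among the $\gamma_i$ repairs this, because the obstruction lives in the shear integral at fixed $a'$, not in the scale exponents.

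The paper's proof avoids the issue entirely, and this is the idea your proposal is missing: on the fine-scale region it uses \emph{only the uniform constant} lower bound from Lemma~\ref{lem:est_norm_prod}(c), namely $\|h(t,a)^{-1}h(t',a')\| \ge C' a^{-1}(a')^{\lambda_{\max}}$ with $C' = \epsilon''/(42 C_{\mathfrak{s}})$ --- no growth in $|t'|$ is invoked at all. This lets one pull out $a^{\beta}(a')^{-\beta\lambda_{\max}}$ from the first $\Theta$-factor, write $a^\beta = a^{\beta(1-\lambda_{\max})}a^{\beta\lambda_{\max}}$, and absorb $(a/a')^{\beta\lambda_{\max}}$ into the \emph{second} factor via Lemma~\ref{lem:est_norm_prod}(b); the leftover integral is then bounded by $\| \Theta_{\gamma_0, \gamma_0}\|_{\LL^1(H)}$ using Lemma~\ref{lem:theta-decays}, whose shear-integrability is established by the orbit--measure-transfer argument, not by pointwise anisotropic bookkeeping. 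Note also that for the part-(c) bound to be positive and uniform on $\{|t'| \ge \delta_0''\}$ one must choose $\epsilon$ (hence $\delta_1 = 3\epsilon$) small relative to $\epsilon''$, $C_M$ and $C_{\mathfrak{s}}$, as in \eqref{eqn:choice_eps}, so that $1 - C_M\delta_1 \ge \tfrac{1}{2}$ and $(1-C_M\delta_1)|t'|/C_{\mathfrak{s}} - \delta_1$ is bounded below; your proposal leaves this choice unaddressed, so even the constant lower bound you use in the fine-scale region is not secured as stated.
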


\begin{proof}
By Lemma \ref{lem:overlap_control}, it suffices to concentrate on the direction $\xi_0 = (1,0,\ldots,0)^T$. Let $V,W',R'$ be given as in Lemma \ref{lem:cone_t_a}, i.e. 
$V = \Omega((\tau_1,\tau_2) \times W_{\epsilon_0})$, $W' = \omega(W_{\epsilon'})$ with $\epsilon'<1$, and $R'$ satisfying \eqref{eq:R-suff-large}. 
In addition, assume that $\epsilon'<7C_{\mathfrak{s}}$, where $C_{\mathfrak{s}}$ denotes the constant from Lemma~\ref{lem:est_norm_prod}(c). 
Let $W'' \subset W'$ and $R''>R'$ be arbitrary, and w.l.o.g. assume that $W'' = \omega(W_{\epsilon''})$ with $\epsilon'' \le \epsilon'$. 

We pick $R=R''$ and $W = \omega(W_{\epsilon})$,  where 
\begin{equation} \label{eqn:choice_eps}
\epsilon < \min\left\{\epsilon'', \frac{1}{6 C_M}, \frac{\epsilon''}{21 C_{\mathfrak{s}}} \right\}~. 
\end{equation}

We need to estimate, for $h \in K_o(W,V,R)$ the integral 
\[
\int_{H_0 \setminus K_i(W'',V,R'')} \Theta_{r,s}(h^{-1} g) \dd g~;
\]
this is the same integral as in the definition of the overlap control condition, since $h\in K_o(W,V,R)\subseteq H_0$ and $\dd g$ is just Haar integration. 
Using the parameterization $g = h(t',a')$, the inclusion relations from Lemma \ref{lem:cone_t_a} with $a_0''=\frac{\tau_1}{R''}, \delta_0''=\frac{\epsilon''}{3}$ imply that it is sufficient to estimate
\[
\underbrace{\int_{a' \ge a_0''} \int_{\mathbb{R}^{d-1}} \Theta_{r,s}(h^{-1}h(t',a')) \dd (h(t',a'))}_{I_1} + \underbrace{\int_{a' \le a_0''} \int_{|t'| \ge \delta_0''}\Theta_{r,s}(h^{-1}h(t',a')) \dd(h(t',a'))}_{I_2}
\]
against a suitable power of $\| h \|$, where $h = h(t,a)$, and $|t| < \delta_1, a < a_1$ with $\delta_1=3\epsilon$ and $a_1=\frac{2\tau_2}{R}$. Here $\dd(h(t',a'))$ denotes the left Haar integration over $H$, as computed in Lemma \ref{lem:sh_gr_Haar} 

For the estimate of $I_1$, we observe that $a' \ge a_0''$ implies via Lemma \ref{lem:est_norm_prod}(b) that 
\[
\| h(t,a)^{-1} h(t',a') \| \ge a^{-1}a_0''~, 
\] hence
\[
(1+\| h(t,a)^{-1} h(t',a') \|)^{-1} \le \frac{a}{a_0''}
\]
and thus for every $0< \alpha < r$
\[
\Theta_{r,s}(h(t,a)^{-1}h(t',a')) \le C a^\alpha \Theta_{r-\alpha, s}(h(t,a)^{-1}h(t',a'))~.
\]
Therefore, as soon as $r-\alpha \ge \dim(H) + d + 1 = 2d+1$ and $s \ge \dim(H) + d = 2d$, which guarantees the integrability of $\Theta_{r-\alpha,s}$ via Lemma \ref{lem:theta-decays}, we have that 
\begin{eqnarray*}
I_1 & \le &  C a^\alpha \int_{H}  \Theta_{r-\alpha, s}(h(t,a)^{-1}g)  dg \\
& \le & C' \| h(t,a) \|^{\alpha/\lambda_{\min}} ~,
\end{eqnarray*} with the last inequality due to Lemma \ref{lem:est_norm_prod}(a). 
Here, $C'$ only depends on $\tau_1$, $R''$, $\alpha,r,s$, and not on $a,t$.

For the estimate of $I_2$, note that $ K_o(W,V,R)\subseteq  K_o(W',V,R')$ and conditions of Lemma~\ref{lem:cone_t_a} for $W',V,R'$ are satisfied.
So, the assumption $h(t,a) \in K_o(W,V,R)$ and the integral bound $|t'|\geq \delta_0''$ entail
\[
|t'| \ge \frac{\epsilon''}{3}~,~ |t| \le \delta_1 = 3 \epsilon < \frac{\epsilon''}{7 C_{\mathfrak{s}}} ~.
\]
It follows successively from (\ref{eqn:choice_eps}) that 
\[
1 - C_M \delta_1 \ge \frac{1}{2}~,~ \left(1 - C_M \delta_1 \right) \frac{|t'|}{C_{\mathfrak{s}}} \ge \frac{\epsilon''}{6C_{\mathfrak{s}}}~,
\] and finally, via Lemma \ref{lem:est_norm_prod}(c),
\[
\| h(t,a)^{-1} h(t',a') \| \ge a^{-1}(a')^{\lambda_{\max}} C' 
\]
with a global constant $C' = \frac{\epsilon''}{42 C_{\mathfrak{s}}}$. It follows for $0< \beta < \min(r,s)$ that 
\begin{eqnarray*}
I_2 & \le & (C')^{-\beta}  \int_{a' \le a_0''} \int_{|t'| \ge \delta_0''}a^{\beta} (a')^{-\beta \lambda_{\max}} \Theta_{r-\beta,s}(h^{-1}h(t',a')) \dd(h(t',a')) \\
& = & (C')^{-\beta} a^{\beta(1-\lambda_{\max})}  \int_{a' \le a_0''} \int_{|t'| \ge \delta_0''}a^{\beta \lambda_{\max}} (a')^{-\beta \lambda_{\max}} \Theta_{r-\beta,s}(h^{-1}h(t',a')) \dd(h(t',a'))  \\
& \le & C'' a^{\beta(1-\lambda_{\max})} \int_H \Theta_{r-\beta,s-\beta \lambda_{\max}}(h^{-1} g) \dd g ~,
\end{eqnarray*}
where the last inequality used
\[
|a| |a'|^{-1} \le \| h(t',a')^{-1} h(t,a) \|~,
\] see Lemma~\ref{lem:est_norm_prod}(b), as well as $\Theta_{r-\beta,s-\beta \lambda_{\max}} \ge 0$.
Thus, again by Lemma \ref{lem:theta-decays} and  Lemma~\ref{lem:est_norm_prod}(a),
\[
I_2 \le C''' \| h \|^{\beta(1-\lambda_{\max})/\lambda_{\min}}
\] with a finite constant $C'''$ as soon as $r \ge 2d+1+\beta$ and $s \ge 2d + \beta \lambda_{\max}$. Here, $C'''$ depends only on $\epsilon''$, $C_{\mathfrak{s}}, \beta,r,s,H$, and is independent of $h$.

For a given $L>0$, substituting $\alpha = L \lambda_{\min}$ and $\beta = L \frac{\lambda_{\min}}{1-\lambda_{\max}}$, we find the desired estimate
\[
\int_{H_0 \setminus K_i(W'',V,R'')} \Theta_{r,s}(h^{-1} g) \dd g \le \tilde{C} \| h \|^{L}~, 
\]for all $h \in K_o(W,V,R)$, as soon as
\[
r \ge 2d+1+L \frac{\lambda_{\min}}{1-\lambda_{\max}}~,~s \ge 2d + L  \frac{\lambda_{\min}\lambda_{\max}}{1-\lambda_{\max}}~.
\]

\end{proof}

With all conditions for the applicability of our main results to shearlet dilation groups being checked, the following theorem spells out the resulting version of Theorem \ref{thm:almost_char_Cc} for this class of groups.

\begin{thm}
\label{thm:almost_char_Cc_sh}
Let $H$ denote a shearlet dilation group. Let $Y$ denote the infinitesimal generator of the scaling subgroup of $H$, with eigenvalues $\lambda_1,\ldots, \lambda_d$, normalized such that $\lambda_1=1$. 
Let $\lambda_{\min} = \min \{ \lambda_2,\ldots,\lambda_d \}$, 
$\lambda_{\max} = \max \{\lambda_2,\ldots,\lambda_d  \}$, and 
assume that $0<\lambda_{\min}\leq \lambda_{\max}<1$ and $\lambda_{\min}+\lambda_{\max} \geq 1$. Let $n_0$ denote the nilpotency degree of the shearing subgroup of $H$.

Define 
\begin{eqnarray} \label{eqn:s0_sh}
s_0 & = & 1 + \left( \frac{3}{2} + \frac{1}{2 \lambda_{\min}} \right) d + (n_0+1) \left( \frac{13}{2} d + (\frac{1}{2\lambda_{\min}}+\frac{3+3\lambda_{\max}}{2-2\lambda_{\max}}) d + 3 \right) \\
s_1 & =  & 1+ (n_0+1) \left( 1 + \frac{\lambda_{\min}}{1-\lambda_{\max}} + \frac{\lambda_{\min}\lambda_{\max}}{1-\lambda_{\max}} \right) \\
s_2 & =&  1 + \frac{1}{\lambda_{\min}} + (n_0+1) \left( 2 + \frac{1}{\lambda_{\min}} + \frac{2}{1-\lambda_{\max}} \right)~. \label{eqn:s2_sh}
\end{eqnarray}

Let $u\in\mathcal{S}'(\mathbb{R}^{d})$ denote a tempered distribution of order $N(u)$.
Let $\psi \in \mathcal{S}(\mathbb{R}^d)$ denote a real-valued wavelet with vanishing moments in $\mathbb{R} \times \{ 0 \}^{d-1}$ of order
\begin{equation} \label{eqn:lower_bd_vm}
r > s_0 + s_1 N + s_2 N(u)~. 
\end{equation}
\begin{enumerate}[label=(\alph*)]
\item \label{enu:RegularDirectedPointNecessaryCondition-cc} If $(x,\pm \xi)$
is an unsigned $N$-regular directed point of $u$, then there exists a neighborhood
$U$ of $x$, some $R>0$ and a $\xi$-neighborhood $W\subset S^{d-1}$
such that the following estimate
holds: 
\[
\exists\, C >0\,\forall\, y\in U\,\forall\, h\in K_{o}(W,V,R) \cup K_o(-W,V,R)~:~|\WW_{\psi}u(y,h)|\le C \cdot\|h\|^{N-\frac{d}{2\lambda_{\min}}}\, . 
\]

\item \label{enu:RegularDirectedPointSufficientCondition} Assume
that $U$ is a neighborhood of $x$, and that there are $R>0$ and
a $\xi$-neighborhood $W\subset S^{d-1}$ such that 
\[\exists\, C >0\,\forall\, y\in U\,\forall\, h\in K_{o}(W,V,R) \cup K_o(-W,V,R)~:~|\WW_{\psi}u(y,h)|\le C \cdot\left\Vert h\right\Vert^{\frac{N}{\lambda_{\min}}+\frac{3d}{2\lambda_{\min}}+\alpha_2}\, .
\]
Then $(x,\pm \xi)$ is an unsigned $N$-regular directed point of $u$. 
\end{enumerate}
\end{thm}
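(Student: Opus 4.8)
The plan is to verify that a shearlet dilation group $H$ satisfies all standing hypotheses of Theorem~\ref{thm:almost_char_Cc}, and then to make the abstract constants $s_0,s_1,s_2$ produced there explicit by substituting the group-specific parameters. The hypotheses are supplied by the preceding lemmas: Lemma~\ref{lem:shear_cone_approx} gives $V$-microlocal admissibility in every direction with $\alpha_1 = 1/\lambda_{\min}$ and $\alpha_2 = d/\lambda_{\min}+\epsilon$; Lemma~\ref{lem:est_AH} verifies inequality~\eqref{assump} with $\ell_1 = n_0+1$; and Lemma~\ref{lem:sh_overlap_control} establishes the $V$-overlap control condition with $\gamma_0 = 2d+1$, $\gamma_1 = \tfrac{\lambda_{\min}}{1-\lambda_{\max}}$, $\gamma_2 = \tfrac{\lambda_{\min}\lambda_{\max}}{1-\lambda_{\max}}$. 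Since $\dim H = d$, the abstract statement of Theorem~\ref{thm:almost_char_Cc} applies verbatim, and the decay exponents $N - \alpha_1 d/2 = N - \tfrac{d}{2\lambda_{\min}}$ in part (a) and $\alpha_1 N + \tfrac32\alpha_1 d + \alpha_2 = \tfrac{N}{\lambda_{\min}} + \tfrac{3d}{2\lambda_{\min}} + \alpha_2$ in part (b) follow immediately by inserting $\alpha_1 = 1/\lambda_{\min}$.

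To pin down $s_0,s_1,s_2$, I would trace back to the conditions (i)--(v) of Theorem~\ref{thm:transfer_decay}, which are precisely what forces the lower bound on the vanishing moment order $r$ inside Theorem~\ref{thm:almost_char_Cc}. The idea is to choose the auxiliary exponents $\beta_1,\beta_2,\beta_3$ as small as the constraints allow: take $\beta_1$ equal to the right-hand side of (i), and take $\beta_2,\beta_3$ equal to the right-hand sides of the two inequalities in (iv). Condition (v) then reads $r > \tfrac{d}{2} + \ell_1(\beta_1+\beta_2+\beta_3) + \beta_1$ with $\ell_1 = n_0+1$. Each $\beta_i$ is affine in $N$ and $N(u)$, so collecting the coefficient of $N$ gives $s_1$, the coefficient of $N(u)$ gives $s_2$, and the remaining constant terms give $s_0$; the displayed formulas emerge after the algebraic simplifications $\gamma_1\alpha_1 = \tfrac{1}{1-\lambda_{\max}}$, $\gamma_2\alpha_1 = \tfrac{\lambda_{\max}}{1-\lambda_{\max}}$ and $\tfrac{\lambda_{\max}}{1-\lambda_{\max}}+1 = \tfrac{1}{1-\lambda_{\max}}$.

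The one point requiring genuine verification, and the main obstacle, is that the minimal choices of $\beta_1$ from (i) and of $\beta_2,\beta_3$ from (iv) also satisfy the remaining constraints (ii) and (iii); only then is (v) the binding condition and the resulting formula correct. This is where the hypothesis $\lambda_{\min}+\lambda_{\max} \ge 1$ enters decisively. Indeed, it is equivalent to $\tfrac{\lambda_{\min}}{1-\lambda_{\max}} \ge 1$, which guarantees that the $N$-coefficient of the (iv)-bound for $\beta_2$ dominates the coefficient $1$ demanded by (ii); and it forces $\lambda_{\max} \ge \tfrac12$, which in turn makes the constant part $2d+1+\tfrac{3d}{2(1-\lambda_{\max})}$ of the (iv)-bounds for $\beta_2,\beta_3$ exceed the values $2+\tfrac52 d$ and $1+4d$ required by (iii). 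The corresponding check for $\beta_1$ uses only $N \ge 1$ together with $\lambda_{\min}\le 1$, which yields $\beta_1 \ge 2+\tfrac32 d + N(u)$ as (iii) demands. Once these inequalities are confirmed, the remainder is the bookkeeping described above, and collecting terms reproduces the stated $s_0,s_1,s_2$.
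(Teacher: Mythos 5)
Your proposal is correct and follows essentially the same route as the paper's proof: it collects $\alpha_1,\alpha_2,\ell_1,\gamma_0,\gamma_1,\gamma_2$ from Lemmas \ref{lem:shear_cone_approx}, \ref{lem:est_AH} and \ref{lem:sh_overlap_control}, takes $\beta_1$ minimal from condition (i) and $\beta_2,\beta_3$ minimal from condition (iv) of Theorem \ref{thm:transfer_decay}, and evaluates condition (v) with $\ell_1=n_0+1$ to extract $s_0,s_1,s_2$. Your explicit checks that these minimal choices also satisfy (ii) and (iii) --- via $\lambda_{\min}+\lambda_{\max}\ge 1 \Leftrightarrow \gamma_1\ge 1$, the consequence $\lambda_{\max}\ge \tfrac12$, and $N\ge 1$ --- are exactly what the paper compresses into the remark around \eqref{eqn:est_factors}, and your algebra (including $\gamma_1\alpha_1=\tfrac{1}{1-\lambda_{\max}}$, $\gamma_2\alpha_1=\tfrac{\lambda_{\max}}{1-\lambda_{\max}}$) reproduces the stated constants.
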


\begin{proof}
We first collect the various constants required for the application of Theorem \ref{thm:almost_char_Cc}, starting with $\dim(H)=d$. Recall from Lemma \ref{lem:shear_cone_approx} that $H$ is microlocally admissible, with constant
\[
\alpha_1 = \frac{1}{\lambda_{\min}} >1~\mbox{ and }  \alpha_2=\frac{d}{\lambda_{\min}} + \epsilon^*, \mbox{ for any } \epsilon^*>0.
\] 
Lemma \ref{lem:est_AH} contributes the constant $\ell_1 = n_0 +1$ for which inequality \eqref{assump} holds. By Lemma \ref{lem:sh_overlap_control}, $H$ fulfills the overlap control condition with constants 
\[ \gamma_0= 2d+1~,~\gamma_1 = \frac{\lambda_{\min}}{1-\lambda_{\max}}~,~ \gamma_2 =  \frac{\lambda_{\min}\lambda_{\max}}{1-\lambda_{\max}}.\]
Clearly, $\gamma_1 \alpha_1 \ge 1$. Moreover, the assumption $\lambda_{\min}+\lambda_{\max}\geq 1$  implies that
\begin{equation} \label{eqn:est_factors}
\gamma_1\geq 1~, \gamma_2 \alpha_1  \ge 1~.
\end{equation}
As a consequence, the sharpest lower bound for $\beta_1$ contained in the conditions (i) through (iv) of Theorem \ref{thm:transfer_decay} is provided by condition (i), whereas the relevant lower bounds for $\beta_2,\beta_3$ are provided by (iv), as a consequence of (\ref{eqn:est_factors}). Thus the conditions of Theorem \ref{thm:transfer_decay} (i) through (iv) are fulfilled by 
\begin{eqnarray*}
\beta_1 & = & 1+ \left( 1+\frac{1}{2\lambda_{\min}} \right) d + N + \left(1+ \frac{1}{\lambda_{\min}}\right) N(u)~ \\
\beta_2 & = & 1 + \left( 2+ \frac{3}{2-2 \lambda_{\max}} \right) d + \frac{\lambda_{\min}}{1-\lambda_{\max}} N + \left( 1+ \frac{1}{1-\lambda_{\max}} \right) N(u) \\
\beta_3 & = & 1 + \left( \frac{7}{2} + \frac{3 \lambda_{\max}}{2-2\lambda_{\max}} \right) d + \frac{\lambda_{\min}\lambda_{\max}}{1-\lambda_{\max}} N + \frac{1}{1-\lambda_{\max}} N(u)~.
\end{eqnarray*}
By Theorem \ref{thm:transfer_decay}, the relevant lower bound for the number of vanishing moments is given by the inequality
\[
r > \frac{d}{2}+\ell_1(\beta_1 + \beta_2 + \beta_3) + \beta_1~.
\] Plugging in the values for $\beta_1,\beta_2$ and $\beta_3$ that we just obtained, and sorting terms, results in the equivalent inequality
\[
r > s_0 + s_1 N + s_2 N(u)~,
\] with $s_0,s_1,s_2$ given by equations (\ref{eqn:s0_sh}) through (\ref{eqn:s2_sh}). 
\end{proof}

Note that Theorem \ref{thm:almost_char_Cc} is applicable to the shearlet dilation group $H$ as soon as $\lambda_{\min}+\lambda_{\max} \ge 1$ and $0< \lambda_{\min} \le \lambda_{\max} < 1$  hold. It is worthwhile noting that these conditions are also sufficient to guarantee that $H$ characterizes the wavefront set of infinite order, by Theorem 28 of \cite{AlDaDeMDeVFu}. Furthermore, Corollary \ref{cor:char_vm_infty} also applies under these conditions.

We note that both the standard and the Toeplitz shearlet groups from Remark \ref{rem:concrete_shearlet_groups} can fulfill these conditions, when suitable parameters are chosen. For the standard case this is obvious, for the Toeplitz case any parameter $0< \delta \le 1/d$ will work.

\subsection{Discussion/comparison/future directions for results on generalized shearlets}

\label{subsect:shearlet_comparison}

It is interesting to note that there is essentially only one condition required to make a shearlet dilation group a useful tool for the characterization of the Sobolev wavefront set. In particular, the structure of the shearing subgroup only influences the involved constants. 

Theorem \ref{thm:almost_char_Cc_sh} can be briefly summarized by the statement that there is an easily computed lower bound on the number $r$ of vanishing moments that allows a near characterization of elements of the unsigned wavefront set, valid for any Schwartz wavelet with $r$ vanishing moments in $\mathbb{R} \times \{ 0 \}^{d-1}$. It establishes an affine dependence of the required number of vanishing moments on the degree of smoothness and/or the order of the distribution under consideration. The main information provided by Theorem \ref{thm:almost_char_Cc_sh}, on top of guaranteeing the applicability of Theorem \ref{thm:almost_char_Cc}, is the concrete expressions for the involved constants, and their dependence on the structure of the underlying group. 

Ultimately only very limited information on the group is required.  Note that $s_0,s_1,s_2$ only need to be determined once for each group, and they depend only on the eigenvalue range for the infinitesimal generator of the scaling subgroup and the nilpotency degree $n_0$ of the shearing subgroup. As the dimension $d$ increases, the number of relevant groups grows rather quickly. For example, it is known that for dimensions $d \ge 7$, there exists an uncountably infinite family of shearing subgroups that are not related by conjugation (see e.g. Remark 15 of \cite{AlDaDeMDeVFu}), and we expect that many of these subgroups can be combined with a scaling subgroup fulfilling the eigenvalue condition of Theorem \ref{thm:almost_char_Cc_sh}. These observations emphasize that Theorem \ref{thm:almost_char_Cc_sh} applies to a large variety of groups. 

It is reasonable to expect that the generality and scope of these results comes at the price of suboptimal estimates and constants. Hence for specific choices of the dilation group $H$ Theorem \ref{thm:almost_char_Cc_sh} should rather be seen as a proof of principle, with considerable room for improvement. 

One instance where currently known results indicate such a possibility concerns the gap between the exponents in the decay conditions of the direct and inverse theorems. In the shearlet case the requirement $0< \lambda_{\min}<1$ implies that $\alpha_1 = \frac{1}{\lambda_{\min}}>1$. This entails that the gap between the exponents, namely $N-\alpha_1 d/2$ and $\alpha_1 N + \frac{3}{2} \alpha_1 d + \alpha_2$ grows linearly with $N$. 

This can be contrasted to the results in \cite{Grohs_2011}. The setting considered there is essentially that of a shearlet dilation group in dimension two, with $\lambda_{\min} = 1/2$ and $n_0 = 1$. A detailed comparison of our results with the direct and inverse theorems (i.e., 3.1 and 5.5) of \cite{Grohs_2011} cannot be easily performed, due to the various differences of notations and definitions. In spirit however, the results are very similar, relying on lower bounds for numbers of vanishing moments that are affine functions of the smoothness degree $N$. A closer inspection indicates that the gap between the decay orders in the direct and inverse theorems, which grows linearly with $N$ in our setting, can be kept under control in \cite{Grohs_2011}. It is currently unclear whether this difference in the results can be addressed within our framework by the use of more refined and possibly more specific estimates than the ones that we are currently using. 

In other respects, our results are stronger and/or more widely applicable than the ones in \cite{Grohs_2011}. To begin with, the results in the cited paper only apply to shearlets in dimension two. As a further important difference we note that \cite{Grohs_2011} imposes the additional constraint $u \in \LL^2(\mathbb{R}^d)$. This assumption implies that the order $N(u)$ is bounded by $d/2$, which is the main reason why $N(u)$ systematically features in our assumptions, but not in \cite{Grohs_2011}. As a consequence of its focus on $\LL^2$, \cite{Grohs_2011} can work with a somewhat extended reservoir of admissible wavelets, which explains additional assumptions in the direct theorem of \cite{Grohs_2011} on the decay properties of $\psi$, that are trivially fulfilled by Schwartz functions, and hence do not appear in our results. 

A final difference between our paper and the predecessor papers \cite{KuLa,Grohs_2011} concerns the role of the unsigned wavefront set. While both \cite{KuLa, Grohs_2011} formulate their results for regular directed points, in both sources the definition of such a point actually refers to Fourier decay inside $C \cup -C$, for a suitably chosen cone $C \subset \mathbb{R}^2$ (see beginning of Section 5 in \cite{KuLa}, and \cite[Definition 2.1]{Grohs_2011}). Hence they effectively refer to unsigned regular directed points in the terminology of our Definition \ref{defn:unsigned_reg_dir}, without mentioning this fact explicitly. And this restriction is not coincidental; it results from the fact that the shearlet groups in \cite{KuLa,Grohs_2011} only use positive scalings $a>0$. In the terminology of our paper, this corresponds precisely to replacing $H$ by the half-space adapted subgroup $H_0$. This choice requires a more restrictive admissibility condition being used in \cite{KuLa,Grohs_2011}, which is ultimately responsible for the inability of the wavelet transform to distinguish directions $\xi$ and $-\xi$ in the wavefront set analysis. 

We emphasize that, in contrast to the mentioned sources, our Theorem \ref{thm:almost_char} does apply to regular directed points proper, as do the results of the precursor paper \cite{FeFuVo}.

\section*{Acknowledgements}
This paper was initiated during a visit of the second author to the Department of Mathematics at RWTH Aachen University in 2018. 
The authors thank the department for the support and hospitality during this visit. The second author also acknowledges support from National Science Foundation Grant DMS-1902301 during the preparation of this article.

 
 

\bibliographystyle{abbrv}
\bibliography{wfset}

\end{document}